\numberwithin{equation}{section}
\newcommand{\leqnomode}{\tagsleft@true\let\veqno\@@leqno}
\newcommand{\reqnomode}{\tagsleft@false\let\veqno\@@eqno}
\DeclareMathOperator{\Gr}{Gr}
\DeclareMathOperator{\Hom}{Hom}
\DeclareMathOperator{\Sp}{Sp}
\DeclareMathOperator{\Spin}{Spin}
\DeclareMathOperator{\Aut}{Aut}
\DeclareMathOperator{\Ric}{Ric}
\DeclareMathOperator{\SGr}{SGr}
\newtheorem{theorem}{Theorem}[section]
\newtheorem{lemma}[theorem]{Lemma}
\newtheorem{proposition}[theorem]{Proposition}
\newtheorem{corollary}[theorem]{Corollary}
\newtheorem{Question}[theorem]{Question}
\theoremstyle{definition}
\newtheorem{definition}[theorem]{Definition}
\newtheorem{remark}[theorem]{Remark}
\begin{document}
	
\title[Greatest Ricci lower bounds of projective horospherical manifolds]{Greatest Ricci lower bounds of projective horospherical manifolds of Picard number one}

\author{DongSeon Hwang}
\address{Center for Complex Geometry, Institute for Basic Science (IBS), Daejeon 34126, Republic of Korea}
\email{dshwang@ibs.re.kr}

\author{Shin-young Kim}
\address{Center for Geometry and Physics, Institute for Basic Science (IBS), Pohang 37673, Republic of Korea}
\email{shinyoungkim@ibs.re.kr}

\author{Kyeong-Dong Park}
\address{Department of Mathematics and Research Institute of Natural Science, Gyeongsang National University, Jinju 52828, Republic of Korea}
\email{kdpark@gnu.ac.kr}

\subjclass[2010]{Primary: 14M27, 32Q26,  Secondary: 32M12, 32Q20, 53C55} 

\keywords{greatest Ricci lower bounds, horospherical varieties, algebraic moment polytopes, K\"{a}hler--Einstein metrics, odd symplectic Grassmannians}

\begin{abstract}
A horospherical variety is a normal $G$-variety such that a connected reductive algebraic group $G$ acts with an open orbit isomorphic to a torus bundle over a rational homogeneous manifold. 
The projective horospherical manifolds of Picard number one are classified by Pasquier, and it turned out that the automorphism groups of all nonhomogeneous ones are non-reductive, which implies that they admit no K\"{a}hler--Einstein metrics. 
As a numerical  measure of the extent to which a Fano manifold is close to be K\"{a}hler--Einstein,  
we compute the greatest Ricci lower bounds of projective horospherical manifolds of Picard number one using the barycenter of each moment polytope with respect to the Duistermaat--Heckman measure based on a recent work of Delcroix and Hultgren. 
In particular, the greatest Ricci lower bound of the odd symplectic Grassmannian $\text{SGr}(n,2n+1)$ can be arbitrarily close to zero as $n$ grows.
\end{abstract}

\maketitle
\setcounter{tocdepth}{1}
\date{\today}


\section{Introduction}

One of the most important problems in K\"ahler geometry is to find a K\"ahler--Einstein metric. 
Whereas Calabi--Yau manifolds and K\"{a}hler manifolds of general type always admit K\"{a}hler--Einstein metrics by the works of Aubin~\cite{aubin78} and Yau~\cite{yau78}, Fano manifolds do not necessarily admit K\"{a}hler--Einstein metrics in general. 
By the Yau--Tian--Donaldson conjecture, now completely solved for anti-canonically polarized Fano varieties (\cite{cds1, cds2, cds3},  \cite{tian15} and \cite{LXZ}), a Fano variety admits a K\"{a}hler--Einstein metric if and only if it is K-polystable. 
 
The \emph{greatest Ricci lower bound} $R(X)$ of a Fano manifold $X$ is defined as
$$ R(X):=\sup \{  0 \leq t \leq 1 \colon \text{there exists a K\"{a}hler form } \omega \in c_1(X) \text{ with } \Ric(\omega) \geq t \, \omega \}.$$
This invariant was first studied by Tian~\cite{tian92}, and was explicitely defined by Rubinstein~\cite{Rubinstein08, Rubinstein09}, where it was called Tian's $\beta$-invariant. 
It was further studied by Sz\'{e}kelyhidi~\cite{Sze11}; Song and Wang~\cite{SW16}. 
Note that if $X$ admits a K\"{a}hler--Einstein metric then the greatest Ricci lower bound $R(X)$ is equal to one. 
Thus $R(X)$ can be regarded as a measure of the extent to which a Fano manifold is close to be K\"{a}hler--Einstein. 
More precisely, $R(X)$ is shown to be the same as the maximum existence time $t \in [0, 1]$ of Aubin and Yau's continuity path: 
for a K\"{a}hler form $\omega \in c_1(X)$, we want to find a K\"{a}hler form $\omega_t$ satisfying the equation 
$$\Ric(\omega_t) = t \, \omega_t + (1 - t) \omega$$
depending on a parameter $t \in [0, 1]$, which for $t=1$ gives the Einstein equation. 
 
The greatest Ricci lower bounds have been considered in certain subclasses of Fano manifolds. 
For any toric Fano manifold $X$, Li~\cite{Li11} found an explicit formula for the greatest Ricci lower bound $R(X)$ purely in terms of the moment polytope associated to $X$. 
This result is extended to Fano manifolds with torus actions of complexity one by Cable~\cite{Cable19} and homogeneous toric bundles by Yao~\cite{yao17}. 
When $X$ is a smooth Fano equivariant compactifications of complex Lie groups, Delcroix~\cite{Del17} obtained a formula for the greatest Ricci lower bound, where the barycenter of the moment polytope with respect to the Lebesgue measure is replaced by the barycenter with respect to the Duistermaat--Heckman measure. 
Furthermore, the recent result of Delcroix and Hultgren \cite{DH21} extend the formula to the case of \emph{horosymmetric} manifolds introduced in \cite{Del20horosymmetric}, which is a class of spherical varieties including horospherical manifolds and smooth symmetric varieties. 
Since any $\mathbb Q$-Fano spherical variety admits a special test configuration with horospherical central fiber by \cite[Corollary 3.31]{Del20}, for the Donaldson--Futaki invariants of $\mathbb Q$-Fano spherical varieties, it is enough to calculate them in the case of horospherical varieties.

The purpose of this paper is to compute the greatest Ricci lower bounds of projective horospherical manifolds, i.e. smooth projective horospherical varieties, of Picard number one. 
Recall that the value is always one for rational homogeneous manifolds because they are K\"{a}hler--Einstein (see \cite[Section 5]{mat72}). 
The horospherical manifolds are the simplest examples of spherical varieities including both toric manifolds and rational homogeneous manifolds.  All nonhomogeneous projective horospherical manifolds of Picard number one are completely classified by Pasquier.

\begin{theorem}[{\cite[Theorem 0.1]{Pasquier}}] 
\label{horosphercal} 
Let $X$ be a projective horospherical manifold of Picard number one. 
If $X$ is nonhomogeneous, it is of rank one and its automorphism group is a connected non-reductive linear algebraic group. 
Moreover, $X$ is uniquely determined by its two closed $G$-orbits $Y$ and $Z$, isomorphic to rational homogeneous manifolds $G/P^{\alpha_i}$ and $G/P^{\alpha_j}$, respectively, where $(G, \alpha_i, \alpha_j)$ is one of the following: 
\begin{enumerate}
\item[\rm (1)] $X^1(n) := (B_n, \alpha_{n-1}, \alpha_n)$ with $n \geq 3$;
\item[\rm (2)] $X^2 := (B_3, \alpha_1, \alpha_3)$;
\item[\rm (3)] $X^3(n,k) := (C_n, \alpha_{k}, \alpha_{k-1})$
 with $n \geq k \geq 2$;
\item[\rm (4)] $X^4 := (F_4, \alpha_2, \alpha_3)$;
\item[\rm (5)] $X^5 := (G_2, \alpha_2, \alpha_1)$.
\end{enumerate}
\end{theorem}

These are two-orbit varieties, with one open orbit and one closed orbit, under the action of their automorphism groups $\Aut(X)$ which are non-reductive, 
and their blow-ups along the closed orbit are again two-orbit varieties with respect to $\Aut(X)$.  
See Subsection~\ref{Projective horospherical manifolds of Picard number one} for further  interesting geometry of these nonhomogeneous horospherical manifolds. In particular, it is known that $X^3(n,k) = \SGr(k,2n+1)$, the odd symplectic Grassmannians. 

Contrary to smooth projective \emph{symmetric} varieties of Picard number one (see \cite{LPY21}), 
all nonhomogeneous projective horospherical manifolds of Picard number one admit no K\"{a}hler--Einstein metrics by a theorem of  Matsushima in \cite{matsushima} since their automorphism groups are not reductive. 
On the other hand, by \cite[Theorem~4.1]{Del20YTD} and  \cite[Theorem~1.2]{Kanemitsu_manuscripta} 
any smooth Fano two-orbit varieties of Picard number one whose blow-ups along the closed orbit are again two-orbit varieties admit K\"{a}hler--Einstein metrics except for horospherical varieties. 

\begin{theorem}
\label{Main theorem} 
For each nonhomogeneous projective horospherical manifold $X$ of Picard number one, its greatest Ricci lower bound $R(X)$ is as in Table \ref{table1}, in which $n \geq 3$ for $X^1(n)$ and $n \geq k \geq 2$ for $X^3(n,k)$.
 \end{theorem}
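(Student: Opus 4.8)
The plan is to reduce the whole statement to a finite, case-by-case application of the Delcroix--Hultgren barycenter formula. By Theorem~\ref{horosphercal} (Pasquier's classification), the nonhomogeneous projective horospherical manifolds of Picard number one are exhausted by the family $X^1(n)$ of type $B_n$ ($n \geq 3$), the family $X^3(n,k)$ of type $C_n$ ($n \geq k \geq 2$), whose maximal members are the odd symplectic Grassmannians $\SGr(n,2n+1)=X^3(n,n)$, together with the three sporadic families $X^2$, $X^4$, $X^5$ of types $B_3$, $F_4$, and $G_2$. For each $X$, associated to a pair $(G,P)$ and two fundamental weights $(\omega,\omega')$, I would first record the three pieces of combinatorial data on which the formula depends: the moment polytope $\Delta$ of $-K_X$, the Duistermaat--Heckman density, and the reference point against which the barycenter is compared (the analogue of the origin in Li's toric formula \cite{Li11}).

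Because $X$ has Picard number one, the valuative part of the horospherical datum is one-dimensional, so $\Delta$ is a single line segment $[\,\chi_1,\chi_2\,]\subset\mathfrak{t}^*$ joining two explicit dominant weights; these endpoints I would compute from $(\omega,\omega')$ and the anticanonical normalization, following the general description of horospherical varieties in \cite{Pasquier, PasquierPerrin}. The Duistermaat--Heckman density is the polynomial $\pi(p)=\prod_{\alpha}\langle \alpha, p\rangle$, the product running over the positive roots $\alpha$ in the nilradical of $P$ (the flag-variety fibre contribution), which I would list explicitly from the root system of $G$ in each type.

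With this data in hand the computation becomes one-dimensional. Parametrizing $\Delta$ by $p(s)=(1-s)\chi_1+s\chi_2$ with $s\in[0,1]$, the barycenter with respect to the Duistermaat--Heckman measure reduces to
\[
 \bar{s}\;=\;\frac{\int_0^1 s\,\pi(p(s))\,ds}{\int_0^1 \pi(p(s))\,ds},
\]
a ratio of integrals of a single explicit polynomial of degree $N=|\Phi^+\setminus\Phi^+_L|$; these evaluate to closed-form expressions in the coefficients $\langle\alpha,\chi_i\rangle$. Feeding $\mathrm{bar}_{\mathrm{DH}}=p(\bar s)$ into the Delcroix--Hultgren formula (Li's recipe with the Lebesgue barycenter replaced by the Duistermaat--Heckman one, see \cite{DH21, Del17}) --- namely, following the ray from $\mathrm{bar}_{\mathrm{DH}}$ through the reference point until it meets $\partial\Delta$ --- yields an explicit value of $R(X)$ in each family, which I would then match against the entries of Table~\ref{table1}.

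The main obstacle is the asymptotic statement for the odd symplectic Grassmannians $\SGr(n,2n+1)$. Here $N$ grows quadratically in $n$, so $\pi(p(s))$ is a polynomial whose degree tends to infinity; by a \emph{Laplace-type concentration} argument the barycenter $\bar{s}$ is pushed toward the endpoint of $\Delta$ at which $\pi$ is largest, and I must show that the reference point ends up on the opposite side, so close to $\partial\Delta$ that the exit distance along the defining ray shrinks to zero, forcing $R(X)\to 0$. Making this concentration quantitative and uniform in $n$ (controlling the relevant products by Stirling's formula) is where the real work lies; the remaining families, including the exceptional types $B_3$, $F_4$, and $G_2$, then reduce to finite elementary calculations, the only delicate point being the correct bookkeeping of the nilradical roots and the polytope endpoints in each case.
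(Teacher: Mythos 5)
Your overall strategy is the paper's: Pasquier's classification reduces everything to five (families of) cases, Lemma~\ref{moment polytope} gives the moment polytope as a line segment with endpoints determined by $2\rho_P$ and the pairings $\langle\alpha_i^\vee,2\rho_P\rangle$, the Duistermaat--Heckman density is the product of $\kappa(\alpha,\cdot)$ over $\alpha\in\Phi_{P^u}$, and $R(X)$ is read off from the barycenter via the ray construction of Corollary~\ref{formula for greatest Ricci lower bounds}. However, there are two concrete problems with how you allocate the work. First, your ray recipe is only a recipe until you know \emph{which} endpoint of the segment the ray from $\textbf{bar}_{DH}(\Delta)$ through $2\rho_P$ exits; equivalently, you must determine the sign of the barycenter parameter relative to $t=0$ (the position of $2\rho_P$). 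This sign is not uniform across the table: for $X^1(n)$ the barycenter lies on the $\varpi_{n-1}$ side, while for $X^3(n,n)$ and $X^3(n,k)$ it lies on the opposite side, and the stated formulas change shape accordingly. Establishing these signs is genuinely nontrivial for the infinite families: the paper needs Lemma~\ref{inequalty:X1} (a positivity of $\int_{-n}^{2} t(2-t)(n+t)^{n-1}(t+2n+2)^{n(n-1)/2}\,dt$, proved by a monotonicity comparison that makes the integral telescope to $0$), Lemma~\ref{inequalty:X3n} (the Wallis-type inequality $(n+2)\int_0^1(1-t^2)^n\,dt>2$), and Lemma~\ref{inequalty:X3nk} (an induction on $n$ for fixed $\ell=n-k$ after repeated integration by parts). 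Your proposal never isolates this step, and ``match against the entries of Table~\ref{table1}'' silently presupposes it.

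Second, you misplace the main difficulty for $\SGr(n,2n+1)=X^3(n,n)$. No Laplace-type concentration argument is needed, and in fact none is appropriate for Theorem~\ref{Main theorem}: the table entry is an exact closed form, not an asymptotic. The key observation (Proposition~\ref{GRLB_X3n}) is that in this case $(n+t)(n+4-t)=-(t-2)^2+(n+2)^2$, so the barycenter integral reduces, after one integration by parts and the substitution $s=(2-t)/(n+2)$, to the Wallis integral $\int_0^1(1-s^2)^n\,ds=(2^n n!)^2/(2n+1)!$, yielding $R(X^3(n,n))=2(2n+1)!/\bigl((n+2)(2^n n!)^2\bigr)$ exactly. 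The limit $R\to 0$ (and likewise $\alpha,\delta\to 0$) is then a one-line Stirling estimate applied to this closed form --- but that is Corollary~\ref{limit of R(X3n)}, not part of the theorem you were asked to prove. So the quantitative, $n$-uniform concentration analysis you flag as ``the real work'' is both unnecessary and aimed at the wrong statement; the actual work is the exact evaluation just described together with the three sign lemmas above.
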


\vskip -1em

\begin{table}[h!]
\begin{center}
\begin{tabular}{|c |c |c|}
	\hline 
	$X$ & $\dim X$  & $R(X)$ \\
		\hline \hline
	$X^1(n)$ 	&	$\displaystyle \frac{n(n+3)}{2}$   	&	$\displaystyle \frac{n \displaystyle \int_{-n}^{2} (2-t) (n+t)^{n-1} (t+2n+2)^{\frac{n(n-1)}{2}} \, dt}{\displaystyle \int_{-n}^{2} (2-t) (n+t)^{n} (t+2n+2)^{\frac{n(n-1)}{2}} \, dt}$ 
\\   
	$X^2$	&	$9$  	&	$\displaystyle \frac{20}{21} \approx 0.952$  
\\ && \\
	$X^3(n,n)$	&	$\displaystyle \frac{n(n+3)}{2}$  	&	$
	\displaystyle \frac{2 \times (2n+1)!}{(n+2)(2^n \times n !)^2}$  
\\ && \\
  $X^3(n,k)$	&	$\displaystyle \frac{k(4n-3k+3)}{2}$ 	&	$\displaystyle \frac{(2n-2k+2) \displaystyle \int_{-k}^{2n-2k+2} (k+t)^{k-1} (2n-2k+2-t)^{2n-2k+1} (4n-3k+4-t)^{k-1} \, dt}{\displaystyle \int_{-k}^{2n-2k+2} (k+t)^{k-1} (2n-2k+2-t)^{2n-2k+2} (4n-3k+4-t)^{k-1} \, dt}$
\\
	$X^4$	&	$23$ 	&	$\displaystyle \frac{178992099}{243545402} \approx 0.734$ 
\\  && \\
	$X^5$	&	$7$ 	&	$\displaystyle \frac{56}{67} \approx 0.8358$ 
\\  && \\
		\hline 
\end{tabular}
\caption{Dimensions and the greatest Ricci lower bounds.}
\label{table1}
\end{center}
\end{table}

In the case of $X^1(n)$ and $X^3(n,k)$, it is possible to compute the exact number $R(X)$ once the integers $n$ and $k$ are given. 
For instance, see the following Table \ref{table2} for the approximate values of $R(X^1(n))$, $R(X^3(n, 2))$, $R(X^3(n, 3))$ and $R(X^3(n, 4))$ for some $n$.

\begin{table}[h]
\begin{center}
\begin{tabular}{c c c c c c c c c c c}
		\toprule
		$n$ & $3$   & $4$ & $5$ & $6$ & $7$ & $10$ & $20$ & $30$ & $50$ & $70$
\\
		\midrule 
	$R(X^1(n))$	&	$0.8955$	&	$0.8755$  &	$0.8686$ 	&	$0.8685$ 	&	$0.8715$	& 	$0.8863$  &	$0.9251$ 	&	$0.9451$ 	&	$0.9644$ 	&	$0.9737$
\\
		\midrule 
	$R(X^3(n, 2))$	&	$0.972$	&	$0.984$  &	$0.99$ 	&	$0.993$ 	&	$0.995$	& 	$0.9975$  &	$0.9994$ 	&	$0.9997$ 	&	$0.9999$ 	&	$0.99995$
\\
		\midrule 
	$R(X^3(n, 3))$	&	$0.875$	&	$0.9375$  &	$0.9625$ 	&	$0.975$ 	&	$0.982$	& 	$0.9917$  &	$0.9980$ 	&	$0.9991$ 	&	$0.9997$ 	&	$0.99984$
\\
		\midrule 
	$R(X^3(n, 4))$	&	$-$	&	$0.820$  &	$0.902$ 	&	$0.938$ 	&	$0.958$	& 	$0.9813$  &	$0.9958$ 	&	$0.9982$ 	&	$0.9994$ 	&	$0.99968$
\\
		\bottomrule
\end{tabular}
\caption{Approximate values of $R(X^1(n))$, $R(X^3(n, 2))$, $R(X^3(n, 3))$ and $R(X^3(n, 4))$.}
\label{table2}
\end{center}
\end{table}

We do not know how to further simplify the integrals even though we have found a nice expression in the case of $X^3(n,n)$. 
However, this complicated expression is still useful to find the following asymptotic behavior of the greatest Ricci lower bounds.

\begin{corollary} 
\label{limits of greatest Ricci lower bounds}
\begin{enumerate}
\item[\rm (1)] 
$R(X^1(n))$ converges to $1$ as $n$ increases, that is, $\displaystyle \lim_{n \to \infty} R(X^1(n)) = 1$. 
\item[\rm (2)] 
For a fixed integer $k \geq 2$, $R(X^3(n, k))$ converges to $1$ as $n$ increases, that is, $\displaystyle \lim_{n \to \infty} R(X^3(n, k)) = 1$. 
\item[\rm (3)] 
$R(X^3(n, n))$ converges to zero as $n$ increases, that is, $\displaystyle \lim_{n \to \infty} R(X^3(n, n)) = 0$. 
\end{enumerate}
\end{corollary}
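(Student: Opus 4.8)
The plan is to start from the closed-form expressions recorded in Table~\ref{table1} and to avoid evaluating the (apparently intractable) integrals altogether; instead I would reduce each limit to an elementary squeeze. The common device is an affine change of variables carrying the interval of integration onto $[0,1]$, after which $R(X)$ is expressed through the barycenter of an explicit nonnegative weight on $[0,1]$. Since such a barycenter automatically lies in $[0,1]$, and since $R(X)\le 1$ holds a priori (it is a supremum over $t\in[0,1]$ by definition), parts (1) and (2) follow by pinching $R(X)$ between $1$ and a rational function of $n$ tending to $1$. Part (3) is then a direct asymptotic estimate of a closed form.

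For part (1), I first observe that the denominator integrand of $R(X^1(n))$ equals $(n+t)$ times the numerator integrand, so that, writing $f(t)=(2-t)(n+t)^{n-1}(t+2n+2)^{n(n-1)/2}$, one has $R(X^1(n))=\frac{n\int f\,dt}{\int(n+t)f\,dt}=\frac{n}{n+\langle t\rangle_f}$, where $\langle t\rangle_f$ is the barycenter of $f(t)\,dt$ on $[-n,2]$. Substituting $t=-n+(n+2)x$ converts the three linear factors into $n+t=(n+2)x$, $2-t=(n+2)(1-x)$, and $t+2n+2=(n+2)(1+x)$; the powers of $(n+2)$ cancel between numerator and denominator, so that $\langle t\rangle_f=-n+(n+2)\langle x\rangle$, with $\langle x\rangle$ the barycenter of the weight $(1-x)x^{n-1}(1+x)^{n(n-1)/2}$ on $[0,1]$. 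This gives
\[
R(X^1(n))=\frac{n}{(n+2)\langle x\rangle}.
\]
Because $0\le\langle x\rangle\le 1$ we get $R(X^1(n))\ge\frac{n}{n+2}$, while $R(X^1(n))\le 1$ by definition; letting $n\to\infty$ squeezes $R(X^1(n))\to 1$.

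Part (2) runs along the same lines. With $M=2n-2k+2$ and $h(t)=(k+t)^{k-1}(M-t)^{M-1}(4n-3k+4-t)^{k-1}$, the denominator integrand is $(M-t)$ times $h$, so $R(X^3(n,k))=\frac{M}{M-\langle t\rangle_h}$. Writing $M'=2n-k+2$ for the length of $[-k,M]$ (so that $M'=M+k$) and substituting $t=-k+M'x$ gives $k+t=M'x$, $M-t=M'(1-x)$, and $4n-3k+4-t=M'(2-x)$, whence $\langle t\rangle_h=-k+M'\langle x\rangle$ for the barycenter $\langle x\rangle$ of $x^{k-1}(1-x)^{M-1}(2-x)^{k-1}$ on $[0,1]$. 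Using $M+k=M'$ this simplifies to $R(X^3(n,k))=\frac{M}{M'(1-\langle x\rangle)}$. Since $\langle x\rangle\ge 0$ we obtain $R(X^3(n,k))\ge\frac{M}{M'}=\frac{2n-2k+2}{2n-k+2}$, and again $R(X^3(n,k))\le 1$; for fixed $k$ the lower bound tends to $1$ as $n\to\infty$, so the squeeze yields $R(X^3(n,k))\to 1$. If a quantitative rate were wanted, one could instead bound $\langle x\rangle\le 2^{k-1}k/(M+k)$ using $1\le(2-x)^{k-1}\le 2^{k-1}$ on $[0,1]$ together with a Beta-integral comparison.

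Finally, part (3) is a direct asymptotic computation from the closed form. Writing $(2n+1)!/(n!)^2=(2n+1)\binom{2n}{n}$ gives
\[
R(X^3(n,n))=\frac{2(2n+1)!}{(n+2)(2^n n!)^2}=\frac{2(2n+1)}{n+2}\cdot\frac{1}{4^n}\binom{2n}{n}.
\]
The prefactor $\frac{2(2n+1)}{n+2}$ tends to $4$, while $4^{-n}\binom{2n}{n}\sim(\pi n)^{-1/2}\to 0$ by Stirling's formula, so the product tends to $0$. The only genuinely delicate point in the whole argument is the orientation of the barycenter bounds in (1) and (2): the payoff of the affine normalization is precisely that the barycenter lands in $[0,1]$, so that the unevaluable integrals never need to be computed, and the a priori inequality $R(X)\le 1$ supplies the matching side of each squeeze.
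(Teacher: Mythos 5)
Your proposal is correct and follows essentially the same route as the paper: for (1) and (2) you derive the identical lower bounds $\tfrac{n}{n+2}$ and $\tfrac{2n-2k+2}{2n-k+2}$ (the paper gets them directly from positivity of the integrands in Propositions~\ref{GRLB_X1} and~\ref{GRLB_X3}, while your affine normalization to $[0,1]$ is the same positivity fact repackaged as a barycenter constraint) and squeeze against the a priori bound $R(X)\le 1$, and for (3) your central-binomial estimate $4^{-n}\binom{2n}{n}\sim(\pi n)^{-1/2}$ is just the paper's Stirling argument in Corollary~\ref{limit of R(X3n)} in a slightly different guise. All steps check out, so no gaps.
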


Motivated by Corollary~\ref{limits of greatest Ricci lower bounds} (3), one may ask the following question suggested by a referee. 
See Corollary~\ref{lowerbound} for further discussions. 

\begin{Question}
\label{realization}
Let $t$ be a real number with $0 < t < 1$. Does there exist a sequence of Fano horospherical manifolds of Picard number one whose greatest Ricci lower bounds converge to $t$? 
\end{Question}

The greatest Ricci lower bound $R(X)$ of a Fano manifold $X$ is closely related with Tian's $\alpha$-invariant and the $\delta$-invariant defined by Fujita and Odaka \cite{FO18}, which plays an important role of verifying the existence of K\"{a}hler--{E}instein metrics for explicitly given Fano varieties. 
In particular, if $X$ does not admit a K\"{a}hler--{E}instein metric, then we have $R(X) = \delta(X, -K_X)$ by \cite[Theorem~5.7]{CRZ19} and \cite[Corollary~7.6]{BBJ21} (see also \cite[Section~3.3]{Golota20}), and $R(X) \geq \alpha(X) \cdot \displaystyle \frac{\dim X + 1}{\dim X}$ by \cite[Theorem~2.1]{tian87}.  
It is interesting to note that the greatest Ricci lower bound, the alpha-invariant and the delta-invariant can be arbitrarily close to $0$ for the odd symplectic Grassmannians  $X^3(n,n) = \SGr(n, 2n+1)$ by Corollary~\ref{limits of greatest Ricci lower bounds}~(3).  

\begin{corollary}[=Corollary \ref{limit of R(X3n)}] 
For every sufficiently small number $\epsilon > 0$, there exists a natural number $n$ such that $R(\SGr(n, 2n+1)) < \epsilon$, $\alpha(\SGr(n, 2n+1)) < \epsilon$ and $\delta(\SGr(n, 2n+1)) < \epsilon$.
\end{corollary}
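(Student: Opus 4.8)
The plan is to deduce this statement directly from Corollary~\ref{limits of greatest Ricci lower bounds}~(3), which already supplies $\lim_{n \to \infty} R(X^3(n,n)) = 0$, by feeding that limit into the two comparison relations among $R$, $\alpha$ and $\delta$ recorded just before the statement. Since $X^3(n,n) = \SGr(n, 2n+1)$ is a nonhomogeneous projective horospherical manifold of Picard number one, its automorphism group is non-reductive (Theorem~\ref{horosphercal}), so by Matsushima's theorem it admits no \ke metric. This is precisely the hypothesis needed to invoke \cite[Theorem~5.7]{CRZ19} and \cite[Corollary~7.6]{BBJ21}, giving the equality
$$
R(\SGr(n, 2n+1)) = \delta(\SGr(n, 2n+1), -K_{\SGr(n, 2n+1)})
$$
for every admissible $n$. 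Thus the $\delta$-invariant is literally equal to $R$ on this family, and the claim for $\delta$ will follow verbatim from the claim for $R$.

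Next I would fix $\epsilon > 0$ and, using Corollary~\ref{limits of greatest Ricci lower bounds}~(3), choose a natural number $n$ with $R(\SGr(n, 2n+1)) < \epsilon$. For that same $n$ the equality above immediately yields $\delta(\SGr(n, 2n+1)) = R(\SGr(n, 2n+1)) < \epsilon$. For the $\alpha$-invariant I would use the inequality $R(X) \geq \alpha(X) \cdot \frac{\dim X + 1}{\dim X}$ from \cite[Theorem~2.1]{tian87}: writing $X = \SGr(n, 2n+1)$ and rearranging gives
$$
\alpha(X) \leq \frac{\dim X}{\dim X + 1} \, R(X) < R(X) < \epsilon,
$$
since $\frac{\dim X}{\dim X + 1} < 1$. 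Hence the same $n$ works simultaneously for all three invariants.

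There is essentially no hard step left in this corollary itself: the entire analytic and combinatorial content sits in Corollary~\ref{limits of greatest Ricci lower bounds}~(3), where the explicit integral expression for $R(X^3(n,n))$ from Table~\ref{table1} is shown to tend to zero. The only point requiring care here is verifying that the non-reductivity of the automorphism group (equivalently, the absence of a \ke metric) holds for every $n$ in the relevant range, so that the equality $R = \delta$ is legitimately available for each chosen $n$; this is guaranteed uniformly by Pasquier's classification together with Matsushima's theorem, as already noted in the introduction. Once that is in place, the argument is a direct packaging of (3) with the two quoted comparison inequalities.
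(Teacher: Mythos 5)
Your proposal is correct and takes essentially the same route as the paper: the paper also obtains this statement by feeding Corollary~\ref{limits of greatest Ricci lower bounds}~(3) (proved in Corollary~\ref{limit of R(X3n)} via Stirling bounds on the closed formula $R(X^3(n,n)) = \frac{2 \times (2n+1)!}{(n+2)(2^n \times n!)^2}$) into the equality $R(X) = \delta(X, -K_X)$ for Fano manifolds without K\"ahler--Einstein metrics and Tian's inequality $R(X) \geq \alpha(X) \cdot \frac{\dim X + 1}{\dim X}$. Your additional care in verifying the no-K\"ahler--Einstein hypothesis uniformly in $n$ via Pasquier's classification and Matsushima's theorem matches the paper's own justification.
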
 
 
The paper is organized as follows.
In Section~2, we recall the theory of spherical varieties by focusing on horospherical varieties, and we state an explicit expression of the greatest Ricci lower bounds for horospherical manifolds in terms of moment polytopes obtained by Delcroix and Hultgren. 
In Section~3, we obtain the greatest Ricci lower bound of a projective horospherical manifold of Picard number one by computing the barycenter of its moment polytope with respect to the Duistermaat--Heckman measure.  


\section{Spherical varieties and horospherical varieties} 
Throughout the paper, we denote by $G$ a connected reductive algebraic group over $\mathbb{C}$ and by $X$ an irreducible normal variety over $\mathbb{C}$.

\subsection{Spherical varieties and colors} 
We review general notions and results about spherical varieties for our use. 
See \cite{Knop91}, \cite{Timashev11} and \cite{Gandini18} for the reference.

\begin{definition}
\label{spherical variety}
A normal variety $X$ equipped with an action of $G$ is called \emph{spherical} if a Borel subgroup $B$ of $G$ acts on $X$ with an open dense orbit.
\end{definition}

Let $X$ be a spherical variety, $B$ be a Borel subgroup of $G$, and $T$ be a maximal torus of $B$. 
Let $G/H$ be an open dense $G$-orbit of $X$.
For a character $\chi \in \mathfrak X(B)$, 
let $$\mathbb C(G/H)^{(B)}_{\chi} = \{ f \in \mathbb C(G/H) : b \cdot f = \chi(b) f \text{ for all } b \in B \}$$ be the set of $B$-semi-invariant functions in $\mathbb C(G/H)$ associated to $\chi$, where $\mathbb C(G/H)$ denotes the function field of $G/H$. 
Recall that  $\mathfrak X(B) = \mathfrak X(T)$ and  $\mathbb C(G/H) = \mathbb C(X)$ in our case. 

The \emph{spherical weight lattice} $\mathcal M$ of $G/H$ is defined as a subgroup of the character group $\mathfrak X(T)$ such that each element $\chi \in \mathcal M$ has the non-zero set of $B$-semi-invariant functions, that is, 
$$\mathcal M = \{ \chi \in \mathfrak X(T) : \mathbb C(G/H)^{(B)}_{\chi} \neq 0 \}.$$
Note that every function $f_{\chi}$ in $\mathbb C(G/H)^{(B)}$ is determined by its weight $\chi$ up to constant. 
This is because any $B$-invariant rational function on $X$ is constant, that is, $\mathbb C(G/H)^{B} = \mathbb C$. 
The spherical weight lattice $\mathcal M$ is a free abelian group of finite rank. 
We define the \emph{rank} of $G/H$ as the rank of the lattice $\mathcal M$. 
Let $\mathcal N = \Hom(\mathcal M, \mathbb Z)$ denote its dual lattice together with the natural pairing $\langle \, \cdot \, , \, \cdot \, \rangle \colon \mathcal N \times \mathcal M \to \mathbb Z$. 

It is known that every open $B$-orbit of a spherical variety is an affine variety and so its complement has pure codimension one consisting of finite $B$-stable prime divisors. 

\begin{definition}
\label{color}
For a spherical homogeneous space $G/H$, 
$B$-stable prime divisors in $G/H$ are called \emph{colors} of $G/H$. 
We denote by $\mathfrak D = \{ D_1, \cdots, D_k \}$ the set of colors of $G/H$. 
\end{definition}

For a spherical $G$-variety $X$, the colors of $X$ means $B$-stable but not $G$-stable divisors that contain a $G$-orbit in $X$. 
We denote by $\mathfrak D(X)$ the set of colors of $X$. 
We note that a color of a spherical variety $X$ corresponds to a color of the open $G$-orbit $G/H$ of $X$. 

As a $B$-semi-invariant function $f_{\chi}$ in $\mathbb C(G/H)^{(B)}_{\chi}$ is unique up to constant, 
we define the \emph{color map} $\rho \colon \mathfrak D \to \mathcal N$ by $\langle \rho(D), \chi \rangle = \nu_D(f_{\chi})$ for $\chi \in \mathcal M$, where $\nu_D$ is the discrete valuation associated to a divisor $D$, that is, $\nu_D(f)$ is the vanishing order of $f$ along $D$. 
Unfortunately, the color map is not injective in general.

Every discrete $\mathbb Q$-valued valuation $\nu$ of the function field $\mathbb C(G/H)$ induces a homomorphism $\hat{\rho}(\nu) \colon \mathcal M \to \mathbb Q$ defined by $\langle \hat{\rho}(\nu), \chi \rangle = \nu(f_{\chi})$. 
Hence, we get a map $$\hat{\rho} \colon \{ \text{discrete $\mathbb Q$-valued valuations on $G/H$} \} \to \mathcal N \otimes \mathbb Q.$$ 
Luna and Vust \cite{LV83} showed that the restriction of $\hat{\rho}$ to the set of all $G$-invariant discrete valuations on $G/H$ is injective. 
Let $\mathcal V$ be the set of $G$-invariant discrete $\mathbb Q$-valued valuations on $G/H$. 
Since the map $\hat{\rho}|_{\mathcal V}$ is injective, we may consider $\mathcal V$ as a subset of $\mathcal N \otimes \mathbb Q$. 
It is known that $\mathcal V$ is a full-dimensional (co)simplicial cone of $\mathcal N \otimes \mathbb Q$, which is called the \emph{valuation cone} of $G/H$. 
For a $G$-stable divisor $E$ in $X$, we simplify the notation $\hat{\rho}(\nu_E)$ as $\hat{\rho}(E)$ which is in $\mathcal V$. 

Since $B$ stabilizes the open $B$-orbit, the stabilizer $P$ of the open $B$-orbit under the $G$-action is a parabolic subgroup of $G$. 
Then, we have the algebraic Levi decomposition $P=L \ltimes P^u$, where $P^u$ is the unipotent radical of $P$ and $L$ is the Levi factor of $P$. 
Let $\Phi^+$ be the set of positive root of $G$, $\Phi_{L}$ be the set of roots of $L$ and $\Phi_{P^u}$ be the set of roots of the unipotent radical $P^u$.  
Let $\rho_G$ be the sum of all fundamental weights of the root system for $G$, $\rho_L$ be the sum of all fundamental weights of the root system for the Levi factor $L$ and let $\rho_P$ be the half of the sum of roots of $P$. Then we have 
$$2 \rho_P=\displaystyle \sum_{\alpha \in \Phi_{P^u}} \alpha = \sum_{\alpha \in \Phi^+ \backslash \Phi_{L}} \alpha= 2 \rho_G - 2 \rho_L.$$

\subsection{Horospherical varieties} 
\label{horospherical varieties}

A normal $G$-variety $X$ is \emph{horospherical}
if $G$ acts with an open orbit $G/H$ isomorphic to a torus bundle over a rational homogeneous manifold $G/P$, or equivalently,
if the isotropy subgroup $H$ of a general point contains the unipotent radical $U$ of a Borel subgroup $B$. 
In this case, $G/H$ above is called a \emph{horospherical homogeneous space} and $X$ is also called an \emph{embedding of a horospherical homogeneous space $G/H$}.   
The dimension of the torus fiber is called the \emph{rank} of a horospherical variety. 
A smooth horospherical variety is called a \emph{horospherical manifold}.
Note that every horospherical variety is spherical. 
Indeed, as the full flag variety $G/B$ contains an open $U$-orbit by the Bruhat decomposition, if $U \subset H$ then $G/B$ has an open $H$-orbit; hence $G/H$ contains an open $B$-orbit.  

The spherical data of a projective horospherical variety can be described as follows. 

\begin{proposition}[{\cite[Section~28]{Timashev11}}]
\label{spherical data of horospherical space} 
For a projective horospherical variety $X$ with an open orbit $G/H$, we denote by $\pi \colon X \to G/P$ the torus fibration map, 
where a parabolic subgroup $P = N_G(H)$ is the normalizer of $H$ in $G$. 
Then, 
\begin{itemize}
	\item the spherical weight lattice $\mathcal M$ is the lattice $\mathfrak X(T / T \cap H)$, which implies that the rank of $X$ is equal to the dimension of the torus fiber of $\pi$, 
	\item the valuation cone $\mathcal V$ is $\mathcal N \otimes \mathbb Q$,  
	\item $\mathfrak D(X) = \{ {\pi^{-1}(D_{\alpha}}) : D_{\alpha} \text{ is a Schubert divisor associated to a simple root $\alpha$ of } G/P \}$, and 
	\item the image $\rho({\pi^{-1}(D_{\alpha})})$ under the color map $\rho$ is the restriction $\alpha^{\vee}|_{\mathcal M}$ of the simple coroot associated to a Schubert divisor $D_{\alpha}$ of $G/P$. 
\end{itemize}
\end{proposition}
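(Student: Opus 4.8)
The plan is to extract each of the four pieces of spherical data directly from the torus-bundle structure $\pi \colon G/H \to G/P$. I would take as given (from the definition of horosphericity together with the standard structure theory of horospherical subgroups) that $P = N_G(H) \supseteq B$ is parabolic and that the fiber $F := P/H$ is a torus; writing $P = L \ltimes P^u$ and using $U \subseteq H$ one gets $P^u \subseteq H$ and $F \cong T/(T \cap H)$, so that $G/H \cong G \times_P F$ is a torus bundle. For the weight lattice I would then show that $\mathbb C(G/H)^{(B)}_\chi \neq 0$ precisely when $\chi$ is the restriction of a character of $P$ that is trivial on $H$, i.e. $\chi \in \mathfrak X(P/H) = \mathfrak X(T/(T \cap H))$. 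The ``only if'' direction follows by restricting a $B$-semi-invariant rational function of weight $\chi$ to a generic fiber of $\pi$, where it becomes a nonzero $T$-semi-invariant function on the torus $F$ and hence a character of $T/(T \cap H)$; for the ``if'' direction, I would extend such a $\chi$ to a character of $P$ and manufacture a global $B$-semi-invariant function of weight $\chi$ from the associated-bundle description $G \times_P F$. Since $\mathfrak X(T/(T \cap H))$ is free of rank $\dim F$, the rank of $G/H$ equals the fiber dimension.

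Next I would handle the colors and the valuation cone. The open $B$-orbit of $G/H$ lies over the open $B$-orbit (big cell) of $G/P$, so every $B$-stable, non-$G$-stable prime divisor of $G/H$ dominates a $B$-stable prime divisor of $G/P$, and conversely each Schubert divisor $D_\alpha \subset G/P$, indexed by a simple root $\alpha$ of $G/P$, pulls back to one; since $\pi$ is $G$-equivariant and $G/P$ has no proper $G$-stable subvariety, none of the $\pi^{-1}(D_\alpha)$ is $G$-stable, giving $\mathfrak D(X) = \{ \pi^{-1}(D_\alpha) \}$. To prove $\mathcal V = \mathcal N \otimes \mathbb Q$, I would realize an arbitrary element of $\mathcal N \otimes \mathbb Q$ by a $G$-invariant valuation arising from a one-parameter degeneration of the torus fiber $F$; equivalently, one invokes the description of $\mathcal V$ as the cone dual to the span of the spherical roots, together with the fact that a spherical homogeneous space is horospherical exactly when its set of spherical roots is empty, forcing $\mathcal V$ to fill the whole space.

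For the color map I would compute $\langle \rho(\pi^{-1}(D_\alpha)), \chi \rangle = \nu_{\pi^{-1}(D_\alpha)}(f_\chi)$ by pushing the computation down to $G/P$: the semi-invariant $f_\chi$ is the pullback of a rational section of the line bundle $\mathcal L_\chi$ on $G/P$ attached to $\chi$, and since $\pi$ is smooth along $\pi^{-1}(D_\alpha)$ the order of vanishing equals that of the section along $D_\alpha$, which by the classical Chevalley formula for Schubert divisors is $\langle \chi, \alpha^\vee \rangle$; hence $\rho(\pi^{-1}(D_\alpha)) = \alpha^\vee|_{\mathcal M}$. I expect the main obstacle to be the valuation-cone statement $\mathcal V = \mathcal N \otimes \mathbb Q$: the other three items reduce to a restriction-to-the-fiber argument, a transport of $B$-stable divisors along the equivariant $\pi$, and a concrete vanishing-order computation, whereas surjectivity onto all of $\mathcal N \otimes \mathbb Q$ genuinely requires the Luna--Vust--Knop theory of $G$-invariant valuations (or the spherical-root characterization of horosphericity) rather than a direct construction; a secondary subtlety is that the color map need not be injective, so one must check that the last step computes the valuation of the correct divisor and not merely its image in $\mathcal N$.
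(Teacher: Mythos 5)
The paper does not actually prove this proposition: it is quoted from the literature (Timashev, Section~28; the same facts appear in Knop's Luna--Vust survey and in Pasquier's work on horospherical varieties), so there is no in-paper argument to compare against. Your blind reconstruction follows exactly the standard route of those references, and it is correct in outline: the Pauer--Knop structure theory for $H \supseteq U$ (so $P = N_G(H)$ is parabolic, $P^u \subseteq H$, and $P/H \cong T/(T \cap H)$ is a torus), identification of $B$-semi-invariant weights with characters of the fiber torus, transport of $B$-stable divisors along $\pi$ using the triviality of the bundle over the big cell, the action of $\Aut_G(G/H) \cong P/H$ by $G$-equivariant automorphisms (equivalently, the absence of spherical roots) to get $\mathcal V = \mathcal N \otimes \mathbb Q$, and Chevalley's divisor formula for the color map. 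Your closing assessment is also accurate: the valuation-cone bullet is the one point that genuinely needs Luna--Vust--Knop input or the explicit $P/H$-action, while the other three reduce to fiber restriction and a vanishing-order computation.

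One subtlety deserves to be made explicit in your first and fourth steps. The open $B$-orbit of $G/H$ lies over the cell $B w_0 P/P$, not over $eP$ --- the paper itself records this when it writes the colors as closures of $B w_0 s_{\alpha} P/H$. So when you restrict $f_{\chi}$ to ``a generic fiber'' to read off its weight, the relevant fiber is the one over $w_0 P$, and comparing the left $B$-weight $\chi$ with the character of $T/(T \cap H)$ acting on that fiber introduces a $w_0$-conjugation; depending on whether one normalizes $H$ to contain the unipotent radical of $B$ or of the opposite Borel, the identification $\mathcal M = \mathfrak X(T/(T\cap H))$ holds on the nose or only up to the twist $\chi \mapsto -w_0\chi$ (one can check on $G = \SL_3$, $H = (\ker \varpi_1)U$, that with the same $B$ on both sides the lattice of $B$-weights is $\mathbb Z \varpi_2$ while $\mathfrak X(T/(T\cap H)) = \mathbb Z \varpi_1$). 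This is a matter of fixing conventions consistently --- all four bullets twist together, so the proposition is correct as cited --- but your fiber-restriction argument as written silently conflates the two tori, and the same care is needed in the Chevalley computation of $\nu_{\pi^{-1}(D_\alpha)}(f_\chi) = \langle \alpha^{\vee}, \chi \rangle$.
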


Moreover, the colors of $X$ are the closures $\overline{B w_0 s_{\alpha} P/H}$ in $X$ of $B$-stable irreducible divisors of $G/H$, where $w_0$ is the longest element in the Weyl group of $(G, T)$ and $s_{\alpha}$ is the simple reflection associated to a simple root $\alpha$ not in $P$. 

The anticanonical divisor $- K_X$ of a spherical variety $X$ is explicitly described in \cite[Theorem 4.2]{Brion97} and \cite[Section 3.6]{Luna97}.
We use the following special case  for projective horospherical varieties. 
 
\begin{proposition}[{\cite[Proposition 3.1]{Pasquier08}}] 
\label{expression of anticanonical divisor}
Let $X$ be a projective horospherical variety with an open orbit $G/H$. Denote by $-K_X$ the  anticanonical divisor. 
Then we have the following linear equivalence 
$$-K_X \sim   \sum_{i=1}^k \langle \alpha^{\vee}_i, 2 \rho_P \rangle D_i + \sum_{j=1}^{\ell} E_j$$ 
for colors $D_i \in \mathfrak D$ and $G$-stable divisors $E_j$ in $X$, where $\alpha^{\vee}_i$ is the coroot of simple root $\alpha_i$ of $G$. 
Here, the coefficient $\langle \alpha^{\vee}_i, 2 \rho_P \rangle$ of each color is positive.
\end{proposition}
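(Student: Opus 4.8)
The plan is to produce a distinguished rational section of the anticanonical sheaf whose divisor is forced to be supported on the $B$-stable prime divisors of $X$, and then to read off its multiplicities along the colors $D_i$ and along the $G$-stable divisors $E_j$ separately. First I would fix a nonzero $B$-semi-invariant rational section $\omega$ of the canonical sheaf $K_X = \bigwedge^{\dim X}\Omega^1_X$. Such an $\omega$ exists and is unique up to scalar: over the function field $\C(X)$ the top exterior power is one-dimensional, and since $X$ is spherical the induced $B$-action on rational volume forms has a one-dimensional eigenspace. Because $\omega$ is a $B$-eigenvector, its divisor $\mathrm{div}(\omega)$ is $B$-stable, hence a $\Z$-combination of the $B$-stable prime divisors of $X$; by Definition~\ref{color} and the structure of a spherical embedding these are exactly the colors $D_i$ and the $G$-stable prime divisors $E_j$. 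Thus $K_X \sim \mathrm{div}(\omega) = \sum_i m_i D_i + \sum_j n_j E_j$, and it remains to compute the two families of coefficients.

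For the $G$-stable divisors I would use the toroidal nature of horospherical varieties, namely that $\mathcal V = \mathcal N \otimes \mathbb Q$ by Proposition~\ref{spherical data of horospherical space}. Near a generic point of $E_j$ the local structure theorem presents $X$ as a product of a factor pulled back from $G/P$ and a toric chart, in which $E_j$ is a toric boundary divisor; a $B$-semi-invariant volume form restricts on the torus factor to a multiple of $\frac{dt_1}{t_1}\wedge\cdots\wedge\frac{dt_r}{t_r}$, which has a simple logarithmic pole along each toric boundary. This forces $n_j = -1$, so the coefficient of $E_j$ in $-K_X$ is $+1$. For the colors I would exploit the torus fibration $\pi\colon G/H \to G/P$ of Proposition~\ref{spherical data of horospherical space}: its fibers are tori, so the relative canonical bundle is trivial and $K_{G/H}\cong \pi^{\ast}K_{G/P}$. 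Since $D_i = \pi^{-1}(D_{\alpha_i})$ is the preimage of the Schubert divisor attached to a simple root $\alpha_i \notin L$, and $D_i$ meets the open orbit densely, the multiplicity $m_i$ is computed on the flag variety $G/P$, where the anticanonical class is $-K_{G/P} \sim \sum_i \langle \alpha^{\vee}_i, 2\rho_P\rangle D_{\alpha_i}$. Indeed $-K_{G/P}$ is the line bundle associated to the weight $2\rho_P = \sum_{\alpha \in \Phi_{P^u}} \alpha$ of Lemma~\ref{barycenter}, and expanding this weight in the basis of fundamental weights dual to the simple coroots yields precisely the coefficient $\langle \alpha^{\vee}_i, 2\rho_P\rangle$ along $D_{\alpha_i}$; pulling back gives $m_i = -\langle \alpha^{\vee}_i, 2\rho_P\rangle$.

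Combining the two computations yields $-K_X \sim -\mathrm{div}(\omega) = \sum_i \langle \alpha^{\vee}_i, 2\rho_P\rangle D_i + \sum_j E_j$, which is the asserted linear equivalence. Positivity of the color coefficients then follows from Lemma~\ref{barycenter}: for a simple root $\alpha_i \notin L$ one has $\langle \alpha^{\vee}_i, 2\rho_P\rangle = \langle \alpha^{\vee}_i, 2\rho_G\rangle - \langle \alpha^{\vee}_i, 2\rho_L\rangle = 2 - \langle \alpha^{\vee}_i, 2\rho_L\rangle \geq 2 > 0$, since $\langle \alpha^{\vee}_i, 2\rho_G\rangle = 2$ for every simple root while $\langle \alpha^{\vee}_i, 2\rho_L\rangle \leq 0$ because each root of $L$ is a nonnegative combination of simple roots distinct from $\alpha_i$ and hence pairs nonpositively with $\alpha^{\vee}_i$. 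I expect the main obstacle to be the bookkeeping in the coefficient computation rather than the overall strategy: justifying $n_j = -1$ requires invoking the local structure theorem for spherical embeddings and tracking how the $B$-semi-invariant form degenerates along the toroidal boundary, while matching $m_i$ with the flag-variety value requires carefully propagating the $B$-weight of $\omega$ through the fibration $\pi$ and checking that no correction arises from the trivial relative canonical bundle.
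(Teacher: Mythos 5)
Note first that the paper contains no proof of this proposition to compare against: it is imported verbatim from Pasquier \cite[Proposition 3.1]{Pasquier08}, whose proof (following Brion and Luna) is essentially the argument you give --- a distinguished $B$-semi-invariant rational volume form, logarithmic poles along the $G$-stable (toroidal) boundary via the local structure theorem, and color multiplicities read off from $-K_{G/P}$ being the line bundle of weight $2\rho_P$, with positivity exactly as in your final step (which is correct: $\langle \alpha_i^\vee, 2\rho_G\rangle = 2$ and the positive roots of $L$ are supported on simple roots other than $\alpha_i$, so they pair nonpositively with $\alpha_i^\vee$).

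One slip needs repair. A $B$-semi-invariant rational section of $K_X$ is \emph{not} unique up to scalar: semi-invariance determines the section only within a fixed weight, since $\mathbb C(G/H)^{(B)}_\chi \neq 0$ for every $\chi$ in the rank-one lattice $\mathcal M$, and multiplying $\omega$ by an eigenfunction $f_\chi$ produces another eigensection whose divisor differs by $\mathrm{div}(f_\chi)$ --- this changes both the $E_j$-coefficients and the color coefficients (though not, of course, the linear equivalence class). So your claim that semi-invariance alone ``forces'' $n_j=-1$ is not correct as stated. The argument survives because your second paragraph in effect fixes the right normalization: take $\omega = \pi^{\ast}\omega_{G/P}\wedge\theta$, where $\omega_{G/P}$ is a $B$-semi-invariant rational volume form on $G/P$ and $\theta$ is the $G$-invariant relative logarithmic form of the torus bundle $\pi$ (it exists because the relative tangent bundle is equivariantly trivial, which is also what justifies $K_{G/H}\cong \pi^{\ast}K_{G/P}$). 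For \emph{this} section, $\omega_{G/P}$ is nonvanishing on the open cell of $G/P$, so on the local-structure chart (a product of a unipotent group and a toric variety $Z$ for the torus $T/T\cap H$, meeting every $G$-orbit) the restriction of $\omega$ is a constant multiple of $du\wedge \frac{dt_1}{t_1}\wedge\cdots\wedge\frac{dt_r}{t_r}$, giving pole order exactly one along each toric boundary divisor and hence $n_j=-1$; and the color multiplicities, computed on the open orbit where $D_i$ is dense, are the pullbacks of the Schubert-divisor multiplicities of $\mathrm{div}(\omega_{G/P})$, giving $m_i = -\langle\alpha_i^\vee, 2\rho_P\rangle$. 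You should make explicit that both coefficient computations refer to this single normalized section; with that stated, your proof is complete and agrees with the proof in the cited source.
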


Let $L$ be a $G$-linearized ample line bundle on a spherical $G$-variety $X$. 
By the multiplicity-free property of spherical varieties, the algebraic moment polytope $\Delta(X, L)$ encodes the structure of representation of $G$ in the spaces of multi-sections of tensor powers of $L$.

\begin{definition}
\label{definition of moment polytope}
The \emph{algebraic moment polytope} $\Delta(X, L)$ of $L$ with respect to $B$ is defined as 
the closure of $\bigcup_{k \in \mathbb N} \Delta_k / k$ in $\mathfrak X(T) \otimes \mathbb R$, 
where $\Delta_k$ is a finite set consisting of (dominant) weights $\lambda$ such that
\begin{equation*}
H^0(X, L^{\otimes k}) = \bigoplus_{\lambda \in \Delta_k} V_G(\lambda).
\end{equation*} 
Here, $V_G(\lambda)$ means the irreducible representation of $G$ with highest weight $\lambda$. 
\end{definition}

The algebraic moment polytope $\Delta(X, L)$ for a polarized (spherical) $G$-variety ($X$, $L$) was introduced by Brion in \cite{Brion87} as a purely algebraic version of the Kirwan polytope. 
This is indeed the convex hull of finitely many points in $\mathfrak X(T) \otimes \mathbb R$ (see \cite{Brion87}). 

In \cite[Chapter 3]{Pasquier08}, Pasquier studied anticanonically polarized Gorenstein Fano horospherical varieties $(X, K_X^{-1})$ and the corresponding algebraic moment polytopes $\Delta(X, K_X^{-1})$, so called $G/H$-reflexive polytopes. 

\begin{proposition}[{\cite[Proposition 3.4]{Pasquier08}}]
Let $G/H$ be a horospherical homogeneous space. 
There is a bijection between the set of isomorphism classes of Gorenstein Fano embeddings of $G/H$ and the set of unimodular equivalence classes of  $G/H$-reflexive polytopes in $\mathcal N \otimes \mathbb R$ up to natural isomorphisms.  
\end{proposition}
 
\begin{proposition}[{\cite{Brion89, Brion97, Pasquier08}}]
\label{moment polytope of spherical variety}
Let $X$ be a Gorenstein Fano horospherical variety. 
Then, assuming the notation in Proposition~\ref{expression of anticanonical divisor},   
the moment polytope $\Delta(X, K_X^{-1})$ is $G/H$-reflexive and 
$$\Delta(X, K_X^{-1}) = 2\rho_P + Q_X^*,$$
where the polytope $Q_X$ is the convex hull of the set 
\begin{equation*}
\left\{ \frac{\rho(D_i)}{\langle \alpha^{\vee}_i, 2 \rho_P \rangle} : i= 1, \cdots, k \right\} \cup \{ \hat{\rho}(E_j) : j= 1, \cdots , \ell \}
\end{equation*}  
in $\mathcal N \otimes \mathbb R$ and 
its dual polytope $Q_X^*$ is defined as 
$\{ m \in \mathcal M \otimes \mathbb R : \langle n, m \rangle \geq -1 \text{ for every } n \in Q_X \}$. 
\end{proposition}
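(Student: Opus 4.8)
The plan is to derive this from Brion's general description of the algebraic moment polytope of a polarized spherical variety, applied to the anticanonical polarization $K_X^{-1}$ and combined with the explicit anticanonical divisor of Proposition~\ref{expression of anticanonical divisor}. Recall that for an ample $G$-linearized line bundle $L$ on a projective spherical variety, Brion's moment polytope records the (normalized) dominant weights occurring in the $G$-module $\bigoplus_n H^0(X, L^{\otimes n})$; because the tangent directions of this body lie in $\mathcal M \otimes \mathbb R$, the polytope is an affine translate $\chi_0 + P_L$ of a polytope $P_L \subset \mathcal M \otimes \mathbb R$, where $\chi_0 \in \mathfrak X(T) \otimes \mathbb R$ is the $B$-weight of the distinguished section and $P_L$ is cut out by linear inequalities attached to the $B$-stable prime divisors. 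The two things to pin down are therefore the translate $\chi_0$ and the inequalities defining $P_L$; I will show $\chi_0 = 2\rho_P$ and $P_L = Q_X^*$.

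For the inequalities I would use the dictionary between $B$-semi-invariant rational functions and divisors. Each $\chi \in \mathcal M$ determines $f_\chi \in \mathbb C(G/H)^{(B)}_\chi$, unique up to scalar, whose divisor on $X$ is
$$\mathrm{div}(f_\chi) = \sum_{i=1}^k \langle \rho(D_i), \chi \rangle\, D_i + \sum_{j=1}^\ell \langle \hat\rho(E_j), \chi \rangle\, E_j,$$
by the very definitions of $\rho$ and $\hat\rho$. The function $f_\chi$ extends to a global section of $K_X^{-1}$ exactly when $\mathrm{div}(f_\chi) + (-K_X) \geq 0$; substituting the expression of Proposition~\ref{expression of anticanonical divisor} turns this into the system $\langle \rho(D_i), \chi \rangle \geq -\langle \alpha^{\vee}_i, 2\rho_P \rangle$ for each color and $\langle \hat\rho(E_j), \chi \rangle \geq -1$ for each $G$-stable divisor. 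Since the color coefficients $\langle \alpha^{\vee}_i, 2\rho_P\rangle$ are positive (Proposition~\ref{expression of anticanonical divisor}), dividing the color inequalities by them rewrites every inequality uniformly as $\langle n, \chi \rangle \geq -1$ with $n$ ranging over the generators $\rho(D_i)/\langle \alpha^{\vee}_i, 2\rho_P\rangle$ and $\hat\rho(E_j)$ of $Q_X$. This is precisely the definition of $Q_X^*$, so $P_L = Q_X^*$.

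It remains to identify the translate. The distinguished section is the canonical section $s_{-K_X}$ whose divisor is the effective boundary expression $\sum_i \langle \alpha^{\vee}_i, 2\rho_P\rangle D_i + \sum_j E_j$; being supported on $B$-stable divisors it is a $B$-eigenvector, and $\chi_0$ is its weight. I would compute $\chi_0$ by recalling that each color $D_i = \pi^{-1}(D_{\alpha_i})$ pulls back a Schubert divisor whose section carries the fundamental weight $\varpi_{\alpha_i}$, while the $G$-stable divisors $E_j$ contribute trivially to the $B$-weight; summing with the multiplicities $\langle \alpha^{\vee}_i, 2\rho_P\rangle$ and invoking $2\rho_P = \sum_{\alpha \in \Phi^+ \setminus \Phi_{L}} \alpha$ from Lemma~\ref{barycenter} yields $\chi_0 = 2\rho_P$. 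Finally, $G/H$-reflexivity is a formal consequence: the Gorenstein hypothesis forces the coefficients $\langle \alpha^{\vee}_i, 2\rho_P\rangle$ and $1$ to be integral, so the facets of $Q_X^*$ sit at lattice distance one from the origin in the sense appropriate to the lattice $\mathcal M$, which is exactly the defining property of a $G/H$-reflexive polytope once the colors are incorporated.

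The main obstacle I anticipate is the weight bookkeeping in the third step: matching the $B$-weight of the canonical anticanonical section to $2\rho_P$ requires care because the color map $\rho$ need not be injective and because one must correctly separate the contributions of the flag-variety (color) directions from the toric ($G$-stable) directions. The inequality step is essentially formal once the divisor dictionary is in place, but verifying that no spurious weights enter, that is, that Brion's body is cut out by exactly these facets and not a proper subpolytope, is where one must appeal to the multiplicity-freeness of $H^0(X, K_X^{-1})$ as a $G$-module, which holds for horospherical varieties.
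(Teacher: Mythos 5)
Your reconstruction is sound and follows essentially the same route as the source this statement is taken from: the paper itself offers no proof, quoting the proposition from \cite[Chapter 3]{Pasquier08} (building on Brion), where precisely your argument appears --- expanding $\mathrm{div}(f_\chi)$ via $\rho$ and $\hat\rho$, characterizing the weights of $H^0(X, K_X^{-n})$ by $\mathrm{div}(f_\chi) - nK_X \geq 0$ and rescaling by the positive color coefficients to produce exactly the vertex inequalities defining $Q_X^*$, and identifying the $B$-weight of the canonical anticanonical section with $2\rho_P$. The weight bookkeeping you flagged as the main obstacle in fact dissolves cleanly, since $\langle \beta^{\vee}, 2\rho_P \rangle = 0$ for every simple root $\beta$ of the Levi $L$, so that $2\rho_P = \sum_{i} \langle \alpha_i^{\vee}, 2\rho_P \rangle \varpi_{\alpha_i}$ with the sum running over the simple roots indexing the colors, and the multiplicity-freeness of $H^0(X, K_X^{-n})$ together with the linear scaling of the inequalities in $n$ rules out the spurious-weight worry exactly as you anticipated.
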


With the description of the moment polytope, we can compute the greatest Ricci lower bound of a Fano horospherical manifold.  

\begin{proposition}[{\cite[Corollary 1.3]{DH21}}] 
\label{Greatest Ricci lower bounds}
The greatest Ricci lower bound $R(X)$ of a Fano horospherical manifold $X$ is equal to 
$$\sup \left \{ t \in (0, 1) : 2 \rho_{P} + \frac{t}{1-t} (2 \rho_{P} - \textbf{bar}_{DH}(\Delta)) \in \textup{Relint}(\Delta) \right \},$$ 
where $\textup{Relint}(\Delta)$ means the relative interior of the moment polytope and $\textbf{bar}_{DH}(\Delta)$ is the barycenter of the moment polytope $\Delta(X, K_X^{-1})$ with respect to the Duistermaat--Heckman measure  
$$\prod_{\alpha \in \Phi_{P^u}} \kappa(\alpha, p) \, dp.$$   
Here, $\kappa$ denotes the Killing form on the Lie algebra $\mathfrak g$ of $G$ and $p \in \mathfrak X(T) \otimes \mathbb R$.   
\end{proposition}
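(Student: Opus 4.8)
The plan is to reduce the complex continuity equation on $X$ to a real Monge--Amp\`ere equation on $\mathcal N \otimes \mathbb R$ and then read off the solvability threshold from the combinatorics of the moment polytope. First I would restrict attention to metrics invariant under a maximal compact subgroup $K$ of $G$: along Aubin and Yau's continuity path $\ric(\omega_t) = t\,\omega_t + (1-t)\,\omega$ one may seek a $K$-invariant solution, and since the open orbit $G/H$ is a torus bundle over $G/P$, a $K$-invariant K\"ahler potential in $c_1(X)$ descends to a smooth convex function $\phi$ on $\mathcal N \otimes \mathbb R$. The Legendre transform of $\phi$ has the interior of the moment polytope $\Delta = \Delta(X, K_X^{-1})$ as its domain, with $2\rho_P$ playing the role of the distinguished center coming from Proposition~\ref{moment polytope of spherical variety}.

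The heart of the reduction is the computation of the complex Monge--Amp\`ere measure $\omega_\phi^{\dim X}$ of such an invariant metric. I would fix a base point of the open orbit and use a polar ($KAK$-type) decomposition adapted to the horospherical structure, so that the Hessian of the potential acquires a block form: the rank/torus directions contribute $\det(D^2\phi)$, while integrating out the homogeneous fiber $G/P$ produces a Jacobian equal to the Duistermaat--Heckman density $\prod_{\alpha \in \Phi_{P^u}} \kappa(\alpha, \cdot)$ evaluated at the gradient $\nabla\phi$. This is the horospherical analogue of the Weyl-type factor appearing for group compactifications in Delcroix~\cite{Del17}, and Lemma~\ref{barycenter} is exactly what identifies the center $2\rho_P$ with the sum of the roots of $P^u$ that define the weight. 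Pushing the equation forward to $\Delta$ turns the continuity path into a real Monge--Amp\`ere equation of the shape $\det(D^2\phi)\,\prod_{\alpha \in \Phi_{P^u}} \kappa(\alpha, \nabla\phi) = e^{-t\phi + (1-t)\phi_0 + \langle \ell, \cdot\rangle}$ for an explicit linear term $\ell$ determined by $2\rho_P$.

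Next, for each fixed $t \in (0,1)$, I would characterize solvability in terms of the convex geometry of $\Delta$. Passing to the Legendre dual and arguing as in the toric case of Li~\cite{Li11} and Sz\'ekelyhidi~\cite{Sze11}, existence is equivalent to the coercivity (properness) of the associated twisted functional, whose derivative along the torus directions is governed by the displacement $\textbf{bar}_{DH}(\Delta) - 2\rho_P$ of the Duistermaat--Heckman barycenter from the center. A direct balancing computation then shows that properness holds precisely when the point $2\rho_P + \tfrac{t}{1-t}\,(2\rho_P - \textbf{bar}_{DH}(\Delta))$ lies in $\textup{Relint}(\Delta)$: if it does, one constructs the required uniform bounds and solves the equation; if it lies on or outside $\partial\Delta$, one produces a one-parameter degeneration in the direction pointing out of the polytope along which the functional is not coercive, obstructing the estimate. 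Taking the supremum over the admissible $t$ yields the stated formula for $R(X)$.

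The main obstacle I would expect is the a priori $C^0$ estimate for the weighted real Monge--Amp\`ere equation: the density $\prod_{\alpha \in \Phi_{P^u}} \kappa(\alpha, \cdot)$ degenerates along $\partial\Delta$, so the standard convexity arguments must be carried out with this vanishing weight, and matching the exact solvability threshold to the relative-interior condition requires care near the boundary faces where the barycenter ray exits. This degeneracy is precisely why the Duistermaat--Heckman measure, rather than the Lebesgue measure, enters in an essential way, and it is the technical content supplied by Delcroix and Hultgren.
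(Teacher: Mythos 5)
Note first that the paper contains no proof of this proposition: it is quoted verbatim from \cite[Corollary 1.3]{DH21}, so the only meaningful comparison is with the argument in the cited work. Your outline does track the actual route of Delcroix--Hultgren (building on \cite{Del17} and \cite{Del20horosymmetric}): restriction to $K$-invariant metrics, identification of such metrics with convex potentials on $\mathcal N \otimes \mathbb R$ via the torus-bundle structure of the open orbit, a Monge--Amp\`ere reduction in which integrating over the $G/P$-fiber produces precisely the density $\prod_{\alpha \in \Phi_{P^u}} \kappa(\alpha, \cdot)$, and a coercivity threshold governed by the displacement of the Duistermaat--Heckman barycenter from $2\rho_P$. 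So the skeleton is right and faithful to the source, and your closing remark correctly identifies the degeneration of the weight along $\partial\Delta$ as the technical crux.

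As a proof, however, all three load-bearing steps are asserted rather than established. (1) The reduction formula --- that the complex Monge--Amp\`ere measure of a $K$-invariant metric equals $\det(D^2\phi)\prod_{\alpha \in \Phi_{P^u}} \kappa(\alpha, \nabla\phi)$ up to a constant, and that $\nabla\phi$ maps onto the interior of $\Delta$ suitably translated --- is exactly the content of Delcroix's K\"ahler geometry of horosymmetric varieties and requires the curvature computation along the fiber, not just a ``block form of the Hessian''; Lemma~\ref{barycenter} alone only identifies $2\rho_P$ with $\sum_{\alpha \in \Phi_{P^u}} \alpha$ and does not substitute for this. (2) The equivalence ``solvable at parameter $t$ iff the twisted functional is proper'' cannot be imported from the toric case of \cite{Li11}: the automorphism group here is nonreductive, so properness must be formulated modulo a group action, and one needs either a Berman--Boucksom-type variational existence theorem or a full a priori estimate scheme adapted to the weight vanishing on $\partial\Delta$. (3) The ``direct balancing computation'' identifying the properness threshold with membership of $2\rho_P + \frac{t}{1-t}(2\rho_P - \textbf{bar}_{DH}(\Delta))$ in $\textup{Relint}(\Delta)$ is the heart of \cite[Corollary 1.3]{DH21} --- it requires computing the slopes of the twisted functional along all geodesic rays encoded by points of $\mathcal N \otimes \mathbb R$ and showing the infimum of slopes changes sign exactly at this condition, including the delicate boundary case where the ray from $\textbf{bar}_{DH}(\Delta)$ through $2\rho_P$ exits through a face. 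Your proposal names these obstacles but does not overcome them; since the paper deliberately outsources all of this to \cite{DH21}, what you have written is an accurate roadmap of the citation rather than an independent proof.
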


By Proposition \ref{Greatest Ricci lower bounds}, we immediately get the following elementary geometric expression for $R(X)$.

\begin{corollary} 
\label{formula for greatest Ricci lower bounds}
Let $X$ be a Fano horospherical manifold without a K\"{a}hler--Einstein metric. 
Denote by $R(X)$ the greatest Ricci lower bound of $X$. 
Let $A$ be the point corresponding to $2 \rho_{P} \in \mathfrak X(T)$ and $C$ the barycenter $\textbf{bar}_{DH}(\Delta)$ of the moment polytope $\Delta(X, K_X^{-1})$ with respect to the Duistermaat--Heckman measure. 
If $Q$ is the point at which the half-line starting from the barycenter $C$ in the direction of $A$ intersects the boundary of the moment polytope $\Delta$, 
then we have
$R(X) = \displaystyle \frac{|\overrightarrow{AQ}|}{|\overrightarrow{CQ}|}.$  
\end{corollary}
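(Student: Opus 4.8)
The plan is to reduce the statement to a single affine reparametrization of the supremum in Proposition~\ref{Greatest Ricci lower bounds}. Write $A = 2\rho_P$ and $B = \textbf{bar}_{DH}(\Delta)$, viewed as points of $\mathfrak X(T) \otimes \mathbb R$, and substitute $s = \frac{t}{1-t}$, an increasing bijection $(0,1) \to (0,\infty)$. First I would rewrite the point occurring in Proposition~\ref{Greatest Ricci lower bounds} as
\begin{equation*}
2\rho_P + \frac{t}{1-t}\bigl(2\rho_P - \textbf{bar}_{DH}(\Delta)\bigr) = A + s(A-B) = B + (1+s)(A-B),
\end{equation*}
and observe that, since $1+s = \frac{1}{1-t}$, this is precisely the point $B + \lambda(A-B)$ of the half-line $\{B + \lambda(A-B) : \lambda \geq 0\}$ issuing from $B$ toward $A$, with parameter $\lambda = \frac{1}{1-t}$. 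As $t$ runs through $(0,1)$, the parameter $\lambda$ runs through $(1,\infty)$, so the supremum in Proposition~\ref{Greatest Ricci lower bounds} measures exactly how far along this half-line one may travel while remaining in $\textup{Relint}(\Delta)$.

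Next I would check that the half-line picture is well posed. The point $A = 2\rho_P$ lies in $\textup{Relint}(\Delta)$: by Proposition~\ref{moment polytope of spherical variety} one has $\Delta = 2\rho_P + Q_X^*$, and the origin lies in the relative interior of the dual polytope $Q_X^*$ by $G/H$-reflexivity, so $A$ is interior. The barycenter $B$ lies in $\textup{Relint}(\Delta)$ by the general fact that the barycenter of a measure positive on the relative interior of a convex body is itself interior. Finally $A \neq B$, for otherwise the displayed point equals $A \in \textup{Relint}(\Delta)$ for every $t \in (0,1)$, forcing $R(X) = 1$ and hence a K\"ahler--Einstein metric, contrary to hypothesis. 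Consequently the points occur along the half-line in the order $B$ (at $\lambda = 0$), then $A$ (at $\lambda = 1$), and by convexity of $\Delta$ there is a unique $\lambda^{\ast} > 1$ at which the half-line crosses $\partial\Delta$; this crossing point is $Q$, and the set of admissible $t$ is an interval whose supremum $t^{\ast} = R(X)$ satisfies $\lambda^{\ast} = \frac{1}{1 - R(X)}$.

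It then remains to read off the ratio. Since $B$, $A$, $Q$ occur in this order, both $\overrightarrow{AQ}$ and $\overrightarrow{BQ}$ are positive multiples of $A - B$, with $\overrightarrow{BQ} = \lambda^{\ast}(A-B)$ and $\overrightarrow{AQ} = (\lambda^{\ast} - 1)(A-B)$, so
\begin{equation*}
\frac{|\overrightarrow{AQ}|}{|\overrightarrow{BQ}|} = \frac{\lambda^{\ast} - 1}{\lambda^{\ast}} = 1 - \frac{1}{\lambda^{\ast}} = 1 - \bigl(1 - R(X)\bigr) = R(X),
\end{equation*}
which is the assertion. I expect no genuine difficulty beyond the two interior memberships: the computation itself is one affine reparametrization. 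The more delicate of the two inputs is confirming that the Duistermaat--Heckman density $\prod_{\alpha \in \Phi_{P^u}} \kappa(\alpha, p)$ is positive on $\textup{Relint}(\Delta)$ — which holds because $\Delta = 2\rho_P + Q_X^*$ lies in the region where the pairings with the positive roots $\alpha \in \Phi_{P^u}$ are positive — so that $B$ is indeed interior and the half-line truly starts inside the polytope.
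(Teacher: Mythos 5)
Your proof is correct and coincides with the paper's (implicit) argument: the paper derives the corollary directly from Proposition~\ref{Greatest Ricci lower bounds} with no written details, and your affine reparametrization $\lambda = \frac{1}{1-t}$, which identifies the point $2\rho_P + \frac{t}{1-t}(2\rho_P - \textbf{bar}_{DH}(\Delta))$ with $B + \lambda(A-B)$ on the half-line from $B$ through $A$, is exactly the computation being left to the reader, as confirmed by the paper's later uses (e.g.\ $R(X^5) = \frac{2}{2+\frac{11}{28}}$). One small caution: the inference ``$R(X)=1$ hence K\"ahler--Einstein'' fails for general Fano manifolds (the supremum need not be attained), though it is harmless here because for a horospherical manifold the degenerate case $A=B$ is itself equivalent to K-polystability by Delcroix's barycenter criterion, and hence to the existence of a K\"ahler--Einstein metric, so $A \neq B$ indeed follows from the hypothesis.
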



\subsection{Projective horospherical manifolds of Picard number one}
\label{Projective horospherical manifolds of Picard number one}

In this subsection, $X$ is a projective horospherical manifold of Picard number one with an open $G$-orbit $G/H$ and $P$ a parabolic subgroup of $G$. 
Then $X$ is Fano since it is of Picard number one and every spherical variety is rational (see \cite[Corollary~2.1.3]{Perrin}).

Let $\{ \alpha_1, \cdots, \alpha_n\}$ be a system of simple roots of $G$ following the standard numbering (e.g. \cite{Humphreys}) and $\varpi_i$ be the $i$th fundamental weight. 
For each $i$, let $P^{\alpha_i}$ denote the maximal parabolic subgroup associated to the simple root $\alpha_i$ and $V(\varpi_i)$ be the irreducible $G$-representation with highest weight $\varpi_i$.

For a highest weight vector $v_i$ of $V(\varpi_{i})$, the $G$-orbit of $[v_i]$ in $\mathbb P( V(\varpi_i))$ is closed and isomorphic to $G/P^{\alpha_i}$, which is a rational homogeneous manifold and denoted by $(G, \alpha_i)$. 
If $v_i$ and $v_j$ are highest weight vectors of $V(\varpi_i)$ and $V(\varpi_j)$ such that $i \neq j$, the open $G$-orbit of $[v_i + v_j] \in \mathbb P(V(\varpi_i) \oplus V(\varpi_j))$ is isomorphic to a $\mathbb C^*$-bundle over a rational homogeneous manifold $G/P$ where $P=P^{\alpha_i}\cap P^{\alpha_j}$. 
Since the closure of the open $G$-orbit is a normal variety according to \cite[Propositon~2.1]{Ho16}, it is a horospherical $G$-variety and we denote it by $(G, \alpha_i, \alpha_j)$.

The projective horospherical manifolds of Picard number one are classified by Pasquier~\cite{Pasquier} using the fact that any nonlinear and $G$-nonhomogeneous ones are of the form $(G, \alpha_i, \alpha_j)$. 
In particular, if $G$-nonhomogeneous $X$ is homogeneous under its automorphism group, it is either projective space, quadric $Q^{2n}$, Grassmannian $\Gr(i,n)$ with $2 \leq i <n$ or spinor variety $\mathbb S_n = \Spin(2n)/P^{\alpha_{n}}$, all which are rational homogeneous manifolds.

Even for nonhomogeneous case, Hong~\cite[Proposition~5.1]{Ho16} proved that a smooth projective horospherical variety $X$ of Picard number one can be embedded as a linear section into a rational homogeneous manifold of Picard number one except when it is $X^1(n)$ for $n \geq 7$. 
However, for $n \geq 7$ it remains open whether $X^1(n)$ is a linear section of a rational homogeneous manifold or not. 
Furthermore, Manivel and Pasquier~\cite{MP20} realized the blow-up of $X$ along the closed orbit $Z$ as the zero locus of a general section of a vector bundle over some homogeneous space. 
For numerical invariants such as dimensions and the first Chern classes of $X$, see \cite[Section~1.6]{GPPS21}.

Each of these nonhomogeneous horospherical variety has many interesting geometry. 
For $n \geq k \geq 2$, $X^3(n, k)$ is isomorphic to the so-called an \emph{odd symplectic Grassmannian} $\text{SGr}(k, 2n+1)$ studied in \cite{Mihai07}, which is defined as a variety parametrizing $k$-dimensional isotropic subspaces in a $(2n+1)$-dimensional complex vector space equipped with a skew-symmetric bilinear form of maximal rank. 
They appears as smooth Schubert varieties in symplectic Grassmannians (\cite{Ho15}) of which standard embeddings are characterized by means of varieties of minimal rational tangents in \cite{KP18}. 
It is also known that odd symplectic Grassmannians are rigid under global K{\"a}hler deformation. 
This is proved in \cite{Park16} for odd Lagrangian Grassmannians $X^3(n,n)$ and in \cite{HL19} for general odd symplectic Grassmannians $X^3(n,k)$. 
Furthermore, Hwang and Li \cite{HL19} characterized odd symplectic Grassmannians $X^3(n,k)$, among Fano manifolds of Picard number one, by their varieties of minimal rational tangents at a general point. 
In addition, Kim~\cite[Section 3]{Kim17} proved the vanishing of the first Lie algebra cohomology of $X$ for their characterizations. 

Recently, Kanemitsu~\cite{Kanemitsu21} proved that the tangent bundles of $X^4$ and $X^1(n)$  for $n \geq 4$ are unstable in the sense of Mumford--Takemoto which disproves a conjecture on stability of tangent bundles of Fano manifolds of Picard number one. 

We know that $X^2$ is isomorphic to a general hyperplane section of the 10-dimensional spinor variety $\mathbb S_5 \subset \mathbb P^{15}$. 
It was studied, for instance, in \cite{Mukai89} and \cite{HM05} as a Fano manifold of coindex 3 and as the variety of minimal rational tangents of a 15-dimensional $F_4$-homogeneous variety $F_4/P^{\alpha_4}$, respectively. 

About $X^5$, there is a result by Pasquier and Perrin \cite{PasquierPerrin} that construct explicitly a smooth flat Fano deformation from the orthogonal Grassmannian $(B_3, \alpha_2) = \text{OGr}(2, 7)$ to $X^5$ using complexified octonions.

\vskip 1em 

Combining Propositions~\ref{spherical data of horospherical space}, \ref{expression of anticanonical divisor}, \ref{moment polytope of spherical variety}, Theorem \ref{horosphercal} and \cite[Theorem 0.2]{Pasquier}, 
we can compute the  moment polytopes of the anticanonical line bundles on nonhomogeneous horospherical manifolds $(G, \alpha_i, \alpha_j)$ of Picard number one. 

\begin{lemma}
\label{moment polytope}
Let $X$ be a  nonhomogeneous projective horospherical manifold of Picard number one. 
If $X$ is of type $(G, \alpha_i, \alpha_j)$, 
then the spherical weight lattice $\mathcal M$ is generated by $\varpi_i-\varpi_j$ and 
the moment polytope $\Delta(X, K_{X}^{-1})$ is the line segment 
$$\{ 2 \rho_P + t (\varpi_i - \varpi_j) : - \langle \alpha_i^{\vee}, 2 \rho_P \rangle \leq t \leq \langle \alpha_j^{\vee}, 2 \rho_P \rangle \}$$
in $2 \rho_P + \mathcal M \otimes \mathbb R$, where $2\rho_P$ is the sum of all roots in the unipotent radical of a submaximal parabolic subgroup $P^{\alpha_i} \cap P^{\alpha_j}$. 
\end{lemma}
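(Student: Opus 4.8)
The plan is to extract the rank-one spherical data of $X = (G,\alpha_i,\alpha_j)$ from the classification and substitute it into Proposition~\ref{moment polytope of spherical variety}. By Theorem~\ref{horosphercal} a nonhomogeneous $X$ has rank one, so by Proposition~\ref{spherical data of horospherical space} the lattice $\mathcal M = \mathfrak X(T/T\cap H)$ is free of rank one and $\mathcal N \otimes \mathbb R \cong \mathbb R$. To identify its generator I would use the model of $X$ as the closure of the $G$-orbit of $[v_i + v_j] \in \mathbb P(V(\varpi_i)\oplus V(\varpi_j))$: a torus element $t \in T$ fixes $[v_i + v_j]$ precisely when $\varpi_i(t) = \varpi_j(t)$, so $T \cap H = \ker\big((\varpi_i - \varpi_j)|_T\big)$. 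Since the fundamental weights form a basis of the weight lattice, $\varpi_i - \varpi_j$ is primitive and hence generates the rank-one lattice $\mathcal M$. I then fix $u := \varpi_i - \varpi_j$ as a basis of $\mathcal M$ and the dual generator $u^\ast$ of $\mathcal N$; the relations $\langle \alpha_i^\vee, u\rangle = 1$ and $\langle \alpha_j^\vee, u \rangle = -1$ identify $\mathcal N \otimes \mathbb R$ with $\mathbb R$.

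Next I would describe the colors and the anticanonical class. As $P = P^{\alpha_i} \cap P^{\alpha_j}$, the only simple roots omitted from the Levi of $P$ are $\alpha_i$ and $\alpha_j$, so Proposition~\ref{spherical data of horospherical space} yields exactly two colors $D_{\alpha_i}, D_{\alpha_j}$ with $\rho(D_{\alpha_i}) = \alpha_i^\vee|_{\mathcal M} = u^\ast$ and $\rho(D_{\alpha_j}) = \alpha_j^\vee|_{\mathcal M} = -u^\ast$. The decisive structural input, which I take from \cite[Theorem 0.2]{Pasquier} together with the two-orbit description in Theorem~\ref{horosphercal}, is that the complement of the open orbit is the union of the two closed orbits $Y \cong G/P^{\alpha_i}$ and $Z \cong G/P^{\alpha_j}$, both of codimension at least two; equivalently, the two maximal cones $\mathbb R_{\geq 0}$ and $\mathbb R_{\leq 0}$ of the complete colored fan are generated by the colors $D_{\alpha_i}$ and $D_{\alpha_j}$, so there are no $G$-stable divisors. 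Proposition~\ref{expression of anticanonical divisor} then gives $-K_X \sim a\, D_{\alpha_i} + b\, D_{\alpha_j}$ with $a = \langle \alpha_i^\vee, 2\rho_P\rangle > 0$ and $b = \langle \alpha_j^\vee, 2\rho_P\rangle > 0$.

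Finally I would apply Proposition~\ref{moment polytope of spherical variety}. Because there are no $G$-stable divisors, $Q_X$ is the convex hull of the two scaled color points $\tfrac1a u^\ast$ and $-\tfrac1b u^\ast$, that is, the segment between them in $\mathcal N \otimes \mathbb R \cong \mathbb R$. Writing $m = t\,u$, the dual polytope is cut out by $\langle \tfrac1a u^\ast, t u\rangle \geq -1$ and $\langle -\tfrac1b u^\ast, t u\rangle \geq -1$, i.e. $-a \leq t \leq b$, so $Q_X^\ast = \{\, t\,u : -\langle \alpha_i^\vee, 2\rho_P\rangle \leq t \leq \langle \alpha_j^\vee, 2\rho_P\rangle \,\}$; translating by $2\rho_P$ gives exactly the asserted segment, and the identification of $2\rho_P$ with the sum of the roots in the unipotent radical of $P$ is Lemma~\ref{barycenter}. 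The one genuinely nonformal step is the claim that $X$ has no $G$-stable divisor, so that both colors persist in $Q_X$ and the two endpoints are governed purely by the anticanonical coefficients $\langle \alpha_i^\vee, 2\rho_P\rangle$ and $\langle \alpha_j^\vee, 2\rho_P\rangle$; once this is granted from Pasquier's classification, the rest is a one-dimensional lattice computation.
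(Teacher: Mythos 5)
Your proposal is correct and follows essentially the same route as the paper's proof: identify $\mathcal M = \mathbb Z(\varpi_i-\varpi_j)$, rule out $G$-stable divisors via \cite[Theorem 0.2]{Pasquier} together with Theorem~\ref{horosphercal} (three orbits, both closed orbits of codimension at least two), read off the anticanonical coefficients from Proposition~\ref{expression of anticanonical divisor}, and dualize the two scaled color points in Proposition~\ref{moment polytope of spherical variety}. The only difference is cosmetic: you make explicit the stabilizer computation $T\cap H=\ker\bigl((\varpi_i-\varpi_j)|_T\bigr)$ behind the lattice generator, which the paper leaves to the cited references.
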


\begin{proof}  
By Propositions~\ref{spherical data of horospherical space} and Theorem~\ref{horosphercal}, the spherical weight lattice $\mathcal M$ is generated by $\varpi_i-\varpi_j$.
By \cite[Theorem 0.2]{Pasquier} and Theorem~\ref{horosphercal}, one can see that  $X$ has three $G$-orbits and the codimensions of the two closed orbits are at least two. 
By Proposition~\ref{expression of anticanonical divisor}, $-K_X = \langle \alpha_i^{\vee}, 2 \rho_P \rangle D_i +\langle \alpha_j^{\vee}, 2\rho_P \rangle D_j$ is a $B$-stable divisor representing the anticanonical line bundle $K_X^{-1}$. 
Because $\rho(D_i) = \alpha_i^{\vee}$ and $\rho(D_j) = \alpha_j^{\vee}$, Proposition~\ref{moment polytope of spherical variety} gives us the inequalities 
$$ \left\langle \frac{\alpha_i^{\vee}}{\langle \alpha_i^{\vee}, 2 \rho_P \rangle}, \, t(\varpi_i - \varpi_j) \right\rangle \geq -1 \iff  t \geq - \langle \alpha_i^{\vee}, 2 \rho_P \rangle$$
and  
$$\left\langle \frac{\alpha_j^{\vee}}{\langle \alpha_j^{\vee}, 2\rho_P \rangle}, \, t(\varpi_i - \varpi_j) \right\rangle \geq -1 \iff t \leq \langle \alpha_j^{\vee}, 2\rho_P \rangle. $$
Therefore, the result follows.
\end{proof}


\section{Greatest Ricci lower bounds of \\ projective horospherical manifolds of Picard number one}
\label{Greatest Ricci lower bounds of smooth projective horospherical varieties of Picard number one}

In this section we prove Theorem~\ref{Main theorem}. 
In the course of the proof, we explicitly present the algebraic moment polytope using Lemma \ref{moment polytope} 
and then compute the greatest Ricci lower bound for every nonhomogeneous projective horospherical manifold of Picard number one 
using Corollary~\ref{formula for greatest Ricci lower bounds}.  
The most tricky part of the proof is to compute the barycenter of the moment polytope with respect to the Duistermaat--Heckman measure.

\subsection{Greatest Ricci lower bound of $X^5$} 

Let $X^5$ be the projective horospherical manifold of type $(G_2, \alpha_2, \alpha_1)$ in Theorem \ref{horosphercal}.
The spherical weight lattice $\mathcal M$ is generated by $\varpi_1 - \varpi_2$ for the fundamental weights $\varpi_1, \varpi_2 \in \mathfrak X(T)$ of a maximal torus $T \subset G_2$, 
so that the dual lattice $\mathcal N$ is generated by $\frac{1}{2} \alpha_1^{\vee} - \frac{1}{2} \alpha_2^{\vee}$ for the coroots $\alpha_1^{\vee}$ and $\alpha_2^{\vee}$. 

Choose a realization of the root system $G_2$ in the Euclidean plane $\mathbb R^2$ with  $\alpha_1 = (1, 0)$ and $\alpha_2 = \left(- \frac{3}{2}, \frac{\sqrt{3}}{2} \right)$.
Then, the complex Lie group $G_2$ has the six positive roots:
\begin{align*}
\Phi^+ &= \left\{ \alpha_1, \alpha_2, \alpha_1 + \alpha_2, 2 \alpha_1 + \alpha_2, 3\alpha_1 + \alpha_2, 3\alpha_1 + 2\alpha_2 \right\}
\\
& = \left\{ (1, 0), \left(-\frac{3}{2}, \frac{\sqrt{3}}{2}\right), \left(-\frac{1}{2}, \frac{\sqrt{3}}{2}\right), \left(\frac{1}{2}, \frac{\sqrt{3}}{2}\right), \left(\frac{3}{2}, \frac{\sqrt{3}}{2}\right), (0, \sqrt{3}) \right\}.
\end{align*}
From the relation $(\alpha_i^{\vee}, \varpi_j)=\delta_{ij}$, 
the fundamental weights corresponding to the system of simple roots are 
$\varpi_1 = \left(\frac{1}{2}, \frac{\sqrt{3}}{2} \right)$ and $\varpi_2 = (0, \sqrt{3})$. 

\begin{figure}
\begin{minipage}[b]{\textwidth} 
\centering
\begin{tikzpicture}
\clip (-2.2,-0.5) rectangle (4.5,7.5); 
\begin{scope}[y=(60:1)]

\coordinate (pi1) at (0,1);
\coordinate (pi2) at (-1,2);
\coordinate (v1) at ($4*(pi1)$);
\coordinate (v2) at ($4*(pi2)$);
\coordinate (a1) at (1,0);
\coordinate (a2) at (-2,1);
\coordinate (a3) at ($3*(a1)+(a2)$);
\coordinate (barycenter) at (-67/28, 179/28);

\coordinate (Origin) at (0,0);
\coordinate (asum) at ($(a1)+(a2)$);
\coordinate (2rho) at (-2,6);

\foreach \x  in {-8,-7,...,9}{
  \draw[help lines,dashed]
    (\x,-8) -- (\x,9)
    (-8,\x) -- (9,\x) 
    [rotate=60] (\x,-8) -- (\x,9) ;
}

\fill (Origin) circle (2pt) node[below left] {0};
\fill (pi1) circle (2pt) node[right] {$\varpi_1$};
\fill (pi2) circle (2pt) node[right] {$\varpi_2$};
\fill (a1) circle (2pt) node[below] {$\alpha_1$};
\fill (a2) circle (2pt) node[above left] {$\alpha_2$};
\fill (a3) circle (2pt) node[below right] {$3\alpha_1+\alpha_2$};

\fill (asum) circle (2pt) node[above] {$\alpha_1+\alpha_2 \quad$};
\fill (2rho) circle (2pt) node[right] {$\, 2\rho$};
\fill (v2) circle (2pt) node[below right] {$\quad \Delta(X^5, K^{-1}_{X^5})$};

\fill (v1) circle (2pt) node[below right] {$4\varpi_1$};
\fill (v2) circle (2pt) node[above left] {$4\varpi_2$};

\fill (barycenter) circle (2pt) node[blue, above right] {$\textbf{bar}_{DH}(\Delta_5)$};

\draw[->,,thick](Origin)--(pi1);
\draw[->,,thick](Origin)--(pi2); 
\draw[->,,thick](Origin)--(a1);
\draw[->,,thick](Origin)--(a2);
\draw[->,,thick](Origin)--(a3); 
\draw[->,,thick](Origin)--(asum); 
\draw[thick,gray](v1)--(v2);

\draw [shorten >=-8cm, blue, thick, dashed] (Origin) to ($(pi1)$);
\draw [shorten >=-6cm, blue, thick, dashed] (Origin) to ($(pi2)$);

\end{scope}
\end{tikzpicture} 

\caption{Moment polytope $\Delta_5=\Delta(X^5, K^{-1}_{X^5})$.}
\label{Delta_5}
\end{minipage}
\end{figure}
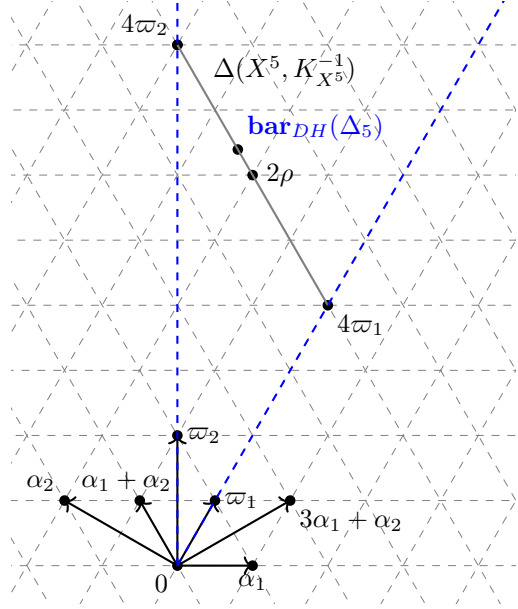

\begin{proposition} 
\label{moment polytope_5}
The moment polytope $\Delta_5 = \Delta(X^5, K_{X^5}^{-1})$ is the line segment connecting two points $4 \varpi_1$ and $4 \varpi_2$ in $\mathfrak X(T) \otimes \mathbb R$, that is, 
\[
\Delta(X^5, K_{X^5}^{-1}) = \{ (2 \varpi_1 + 2 \varpi_2) + t(\varpi_1 - \varpi_2) : -2 \leq t \leq 2 \}.
\] 
\end{proposition}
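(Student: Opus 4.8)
The plan is to specialize Lemma~\ref{moment polytope} to the type $(G_2, \alpha_2, \alpha_1)$, so that $i = 2$ and $j = 1$. That lemma already identifies the moment polytope as the line segment
$$\{2\rho_P + t(\varpi_2 - \varpi_1) : -\langle \alpha_2^\vee, 2\rho_P\rangle \leq t \leq \langle \alpha_1^\vee, 2\rho_P\rangle\},$$
so the whole computation reduces to pinning down the vector $2\rho_P$ together with the two pairing coefficients $\langle \alpha_1^\vee, 2\rho_P\rangle$ and $\langle \alpha_2^\vee, 2\rho_P\rangle$, and then reconciling the spanning vector $\varpi_2 - \varpi_1$ with the convention $\varpi_1 - \varpi_2$ fixed earlier for $\mathcal M$.

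First I would identify the submaximal parabolic $P = P^{\alpha_1} \cap P^{\alpha_2}$. Since $G_2$ has only two simple roots, removing both leaves the Levi factor equal to the maximal torus $T$, so $P = B$ is the Borel subgroup and its unipotent radical $P^u$ exhausts $\Phi^+$. Hence $2\rho_P = \sum_{\alpha \in \Phi^+}\alpha$, which I would evaluate directly from the six listed positive roots in the planar coordinates; adding up the two coordinates gives $2\rho_P = (1, 3\sqrt{3})$, and comparison with $\varpi_1 = (1/2, \sqrt{3}/2)$ and $\varpi_2 = (0, \sqrt{3})$ yields $2\rho_P = 2\varpi_1 + 2\varpi_2$. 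Equivalently, one may invoke Lemma~\ref{barycenter} with $L = T$, whence $\rho_L = 0$ and $2\rho_P = 2\rho_G$, recovering the same vector since $\rho_G = \varpi_1 + \varpi_2$.

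Next I would compute the pairings using the duality $(\alpha_i^\vee, \varpi_j) = \delta_{ij}$ recorded in the setup. Writing $2\rho_P = 2\varpi_1 + 2\varpi_2$ gives $\langle \alpha_1^\vee, 2\rho_P\rangle = 2$ and $\langle \alpha_2^\vee, 2\rho_P\rangle = 2$, so the segment from Lemma~\ref{moment polytope} becomes $\{2\rho_P + t(\varpi_2 - \varpi_1) : -2 \leq t \leq 2\}$. Performing the harmless reindexing $t \mapsto -t$, which fixes the symmetric interval $[-2,2]$, rewrites this in the form claimed in the statement; finally, evaluating at $t = 2$ and $t = -2$ confirms that the two endpoints are $4\varpi_1$ and $4\varpi_2$, respectively.

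I do not expect any genuine obstacle: the result is a direct specialization of Lemma~\ref{moment polytope}, and the only substantive computation is the explicit determination of $2\rho_P$. The one point requiring care is the ordering inside the triple $(G_2, \alpha_2, \alpha_1)$, namely that $i = 2$ and $j = 1$, which fixes the sign of the spanning vector $\varpi_i - \varpi_j$ and thus forces the reindexing $t \mapsto -t$ needed to match the stated symmetric presentation.
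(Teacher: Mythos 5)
Your proposal is correct and follows essentially the same route as the paper: identify $P = P^{\alpha_1} \cap P^{\alpha_2} = B$, deduce $2\rho_P = 2\rho_{G_2} = 2\varpi_1 + 2\varpi_2$ (the paper does this in one line, where you also verify it by summing the six positive roots in coordinates), and then specialize Lemma~\ref{moment polytope}. Your extra care about the ordering $i=2$, $j=1$ in the triple $(G_2,\alpha_2,\alpha_1)$ and the reindexing $t \mapsto -t$ is sound but harmless here, since the interval $[-2,2]$ is symmetric.
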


\begin{proof}
Since $P = P^{\alpha_1} \cap P^{\alpha_2} = B$, we have $2 \rho_P = 2 \rho_{G_2} = 2 \varpi_1 + 2 \varpi_2$ 
and the assertion follows from Lemma~\ref{moment polytope}
(see Figure~\ref{Delta_5}). 
\end{proof}

\begin{remark} 
Considering the minimal embedding 
$X^5 \hookrightarrow \mathbb P(V_{G_2}(\varpi_1) \oplus V_{G_2}(\varpi_2) )$ and $K_{X^5}^{-1} = \mathcal O(4)$, motivated by the original definition of Brion, 
we have 
\begin{align*}
H^0(X^5, \mathcal O(1)) & = V_{G_2}(\varpi_1) \oplus V_{G_2}(\varpi_2) \quad \text{and} \\
H^0(X^5, K_{X^5}^{-1}) & = V_{G_2}(4 \varpi_1) \oplus V_{G_2}(3 \varpi_1 + \varpi_2) \oplus V_{G_2}(2 \varpi_1 + 2 \varpi_2) \oplus V_{G_2}(\varpi_1 + 3 \varpi_2) \oplus V_{G_2}(4\varpi_2). 
\end{align*}
\end{remark} 

\begin{proposition} 
\label{GRLB_X5} 
The greatest Ricci lower bound of $X^5$ is $R(X^5) = \displaystyle \frac{56}{67} \approx 0.8358$. 
\end{proposition}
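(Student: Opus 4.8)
The plan is to apply the elementary geometric formula of Corollary~\ref{formula for greatest Ricci lower bounds}, which reduces the problem to locating three collinear points on the one-dimensional moment polytope $\Delta_5$: the point $A$ corresponding to $2\rho_P$, the Duistermaat--Heckman barycenter $B$, and the boundary point $Q$ cut out by the half-line from $B$ through $A$. Using Proposition~\ref{moment polytope_5} I parametrize $\Delta_5$ by $p(t) = 2\rho_P + t(\varpi_1 - \varpi_2) = (2+t)\varpi_1 + (2-t)\varpi_2$ with $t \in [-2,2]$; in this coordinate $A$ sits at $t=0$, and the two endpoints $4\varpi_2$ and $4\varpi_1$ are $t=-2$ and $t=2$.

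First I compute the Duistermaat--Heckman density along the segment. Since $P = P^{\alpha_1}\cap P^{\alpha_2} = B$ is the Borel subgroup, the unipotent radical is all of $U$ and $\Phi_{P^u} = \Phi^+$ is the full set of six positive roots of $G_2$ listed above (consistent with $2\rho_P = 2\rho_{G_2} = 2\varpi_1+2\varpi_2$ via Lemma~\ref{barycenter}). Because the Killing form restricted to the real span of the roots is a positive multiple of the standard inner product of the chosen realization, and any overall positive scalar cancels in a barycenter, I may replace $\prod_{\alpha\in\Phi^+}\kappa(\alpha,p(t))$ by $\prod_{\alpha\in\Phi^+}\langle\alpha,p(t)\rangle$. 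Evaluating the six pairings with $\varpi_1=(\tfrac12,\tfrac{\sqrt3}{2})$ and $\varpi_2=(0,\sqrt3)$, each linear form turns out to be, up to a positive constant, one of $2+t$, $2-t$, $4-t$, $10-t$, $6-t$, while the root $3\alpha_1+\alpha_2$ contributes the constant $6$. Hence, up to an irrelevant positive factor, the density is the degree-five polynomial $g(t) = (2+t)(2-t)(4-t)(6-t)(10-t)$, strictly positive on the open segment and vanishing at both endpoints.

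Next I determine the barycenter $t_B = \bigl(\int_{-2}^{2} t\,g(t)\,dt\bigr)\big/\bigl(\int_{-2}^{2} g(t)\,dt\bigr)$. Expanding $g$ into monomials and integrating over the symmetric interval $[-2,2]$ annihilates every odd-power term, so both integrals collapse to a few elementary even moments $\int_{-2}^{2} t^{2m}\,dt$. Carrying this out yields $t_B = -\tfrac{11}{28}$, so that $B$ lies strictly on the $t<0$ side of $A$.

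Finally I read off $Q$ and conclude. Since $t_B<0$, the half-line from $B$ toward $A$ points in the direction of increasing $t$ and exits $\Delta_5$ at the endpoint $t=2$, so $Q$ corresponds to $t=2$. As the three points are collinear, the length ratio in Corollary~\ref{formula for greatest Ricci lower bounds} equals the ratio of the $t$-displacements (the common factor $|\varpi_1-\varpi_2|$ cancels), giving $R(X^5) = \frac{|2-0|}{|2-(-11/28)|} = \frac{2}{67/28} = \frac{56}{67}$. The only genuinely computational ingredient is the barycenter $t_B$, which the paper flags as the tricky step in general; but in this rank-one case it is harmless, being a single one-variable polynomial integral over a symmetric interval.
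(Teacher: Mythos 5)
Your proposal is correct and takes essentially the same approach as the paper: parametrize $\Delta_5$ by $\gamma_5(t)=(2+t)\varpi_1+(2-t)\varpi_2$, evaluate the Duistermaat--Heckman density as the product over the six positive roots of $G_2$ (your factorization $(2+t)(2-t)(4-t)(6-t)(10-t)$, up to a positive constant, matches the paper's $P_{DH}(\gamma_5(t))$), find the barycenter at $t_B=-\tfrac{11}{28}$, and apply Corollary~\ref{formula for greatest Ricci lower bounds} to get $R(X^5)=\tfrac{2}{2+11/28}=\tfrac{56}{67}$. The only cosmetic difference is that the paper first computes in Euclidean coordinates and then repeats the calculation in the weight basis, while you work in the weight-basis parametrization throughout.
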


\begin{proof}
For $p=(x, y)$, the Duistermaat--Heckman measure on $\mathfrak X(T) \otimes \mathbb R$ is given as   
\[
\prod_{\alpha \in \Phi^+} \kappa(\alpha, p) \, dp 
= x \Big(-\frac{3}{2}x + \frac{\sqrt{3}}{2}y\Big) \Big(-\frac{1}{2}x + \frac{\sqrt{3}}{2}y\Big) \Big(\frac{1}{2}x + \frac{\sqrt{3}}{2}y\Big) \Big(\frac{3}{2}x + \frac{\sqrt{3}}{2}y\Big) (\sqrt{3}y) \, dxdy =: P_{DH}(x, y) \, dxdy.
\]
From Proposition~\ref{moment polytope_5}, 
the moment polytope $\Delta_5$ is parametrized as 
$$\gamma_5(t) = \left(1+\frac{t}{2}, \frac{\sqrt{3}}{2}(2+t) + \sqrt{3}(2-t) \right)$$ 
for $-2 \leq t \leq 2$. 
Then, we can compute the volume (or length) 
\[
\text{Vol}_{DH}(\Delta_5) = \displaystyle \int_{-2}^{2} P_{DH}(\gamma_5(t)) \, dt = 9216,  
\]
and the barycenter 
$$\textbf{bar}_{DH}(\Delta_5) = (\bar{x}, \bar{y}) 
= \frac{1}{\text{Vol}_{DH}(\Delta_5)} \displaystyle \int_{\Delta_5}p \prod_{\alpha \in \Phi^+} \kappa(\alpha, p) \, dp 
= \left(\frac{45}{56}, \frac{179 \sqrt{3}}{56} \right) \approx (0.804, 3.196 \times \sqrt{3})$$ 
of the moment polytope $\Delta_5$ with respect to the Duistermaat--Heckman measure. 
Since $\textbf{bar}_{DH}(\Delta_5) = \gamma_5(-\frac{11}{28})$, 
we get the greatest Ricci lower bound $R(X^5) = \displaystyle \frac{2}{2+\frac{11}{28}} = \frac{56}{67}$ by Corollary~\ref{formula for greatest Ricci lower bounds}.
\end{proof}

Using the basis of the weight lattice instead of the orthogonal coordinate system is more effective for calculations of the remaining cases. 
From the relations  
\[
\kappa (\alpha_1, \varpi_1) = \frac{1}{2}, \, \kappa (\alpha_2, \varpi_1) = 0, \quad 
\kappa (\alpha_1, \varpi_2) = 0, \, \kappa (\alpha_2, \varpi_2) = \frac{3}{2},
\]
we have the Duistermaat--Heckman polynomial evaluated on the moment polytope $\Delta_5$ 
\begin{align*}
P_{DH}(\gamma_5(t)) & = \prod_{\alpha \in \Phi^+} \kappa(\alpha, (2+t)\varpi_1 + (2-t) \varpi_2) \\
& = \frac{1}{2}(2+t) \cdot \frac{3}{2}(2-t) \cdot \left \{ \frac{1}{2}(2+t) + \frac{3}{2}(2-t) \right \} \cdot \left \{ 2 \cdot \frac{1}{2} (2+t) + \frac{3}{2}(2-t) \right \} \\
& \times \left \{ 3 \cdot \frac{1}{2}(2+t) + \frac{3}{2}(2-t) \right \} \cdot \left \{ 3 \cdot \frac{1}{2}(2+t) + 2 \cdot \frac{3}{2}(2-t) \right \}. 
\end{align*}
As $\displaystyle \frac{1}{\text{Vol}_{DH}(\Delta_5)} \int_{-2}^{2} (2+t) \cdot P_{DH}(\gamma_5(t)) \, dt = \frac{45}{28}$ and 
$\displaystyle \frac{1}{\text{Vol}_{DH}(\Delta_5)} \int_{-2}^{2} (2-t) \cdot P_{DH}(\gamma_5(t)) \, dt = \frac{67}{28}$, 
we get the same result $\displaystyle \textbf{bar}_{DH}(\Delta_5) = \frac{45}{28} \varpi_1 + \frac{67}{28} \varpi_2 = \gamma_5 \left(-\frac{11}{28}\right)$.

\subsection{Greatest Ricci lower bound of $X^2$} 
Let $X^2$ be the projective horospherical manifold of type $(B_3, \alpha_1, \alpha_3)$ in Theorem~\ref{horosphercal}. 

\begin{proposition} 
\label{moment polytope_2}
The moment polytope $\Delta_2 = \Delta(X^2, K_{X^2}^{-1})$ is the line segment connecting two points $7 \varpi_1$ and $7 \varpi_3$ in $\mathfrak X(T) \otimes \mathbb R$, that is, 
\[
\Delta(X^2, K_{X^2}^{-1}) = \{ (3\varpi_1 + 4\varpi_3) + t(\varpi_1 - \varpi_3) : -3 \leq t \leq 4 \}.
\] 
\end{proposition}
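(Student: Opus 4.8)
The plan is to apply Lemma~\ref{moment polytope} verbatim to the triple $(G, \alpha_i, \alpha_j) = (B_3, \alpha_1, \alpha_3)$, exactly as was done for $X^5$. That lemma identifies $\Delta(X^2, K_{X^2}^{-1})$ with the line segment
$$\{ 2\rho_P + t(\varpi_1 - \varpi_3) : -\langle \alpha_1^\vee, 2\rho_P \rangle \le t \le \langle \alpha_3^\vee, 2\rho_P \rangle \},$$
where $2\rho_P$ is the sum of the roots in the unipotent radical of the submaximal parabolic $P = P^{\alpha_1} \cap P^{\alpha_3}$. Thus the whole proof reduces to writing $2\rho_P$ in the basis $\{\varpi_1, \varpi_2, \varpi_3\}$ and evaluating the two coroot pairings that pin down the endpoints.

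First I would fix the standard realization of $B_3$ with $\alpha_1 = \epsilon_1 - \epsilon_2$, $\alpha_2 = \epsilon_2 - \epsilon_3$, $\alpha_3 = \epsilon_3$, and list its nine positive roots. The Levi factor $L$ of $P$ is generated by the unique simple root that is not removed, namely $\alpha_2$, so $\Phi_L \cap \Phi^+ = \{\alpha_2\}$. Lemma~\ref{barycenter} then gives $2\rho_P = \sum_{\alpha \in \Phi^+ \setminus \Phi_L} \alpha = 2\rho_{B_3} - \alpha_2$. Summing the positive roots yields $2\rho_{B_3} = 5\epsilon_1 + 3\epsilon_2 + \epsilon_3$, and hence $2\rho_P = 5\epsilon_1 + 2\epsilon_2 + 2\epsilon_3$.

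Next I would convert to fundamental weights using $\varpi_1 = \epsilon_1$, $\varpi_2 = \epsilon_1 + \epsilon_2$, $\varpi_3 = \tfrac12(\epsilon_1 + \epsilon_2 + \epsilon_3)$, equivalently $\epsilon_1 = \varpi_1$, $\epsilon_2 = \varpi_2 - \varpi_1$, $\epsilon_3 = 2\varpi_3 - \varpi_2$. A short substitution gives $2\rho_P = 3\varpi_1 + 4\varpi_3$, matching the stated base point. Finally, using $\langle \alpha_i^\vee, \varpi_j \rangle = \delta_{ij}$ I read off $\langle \alpha_1^\vee, 2\rho_P \rangle = 3$ and $\langle \alpha_3^\vee, 2\rho_P \rangle = 4$, so $t$ ranges over $[-3, 4]$; substituting the two endpoints into $2\rho_P + t(\varpi_1 - \varpi_3)$ produces $7\varpi_3$ at $t = -3$ and $7\varpi_1$ at $t = 4$, which is precisely the assertion.

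Since each step is an explicit finite root-system computation, I do not expect any genuine obstacle. The only place demanding care is the correct enumeration of the nine positive roots of $B_3$ together with the identification $\Phi_L \cap \Phi^+ = \{\alpha_2\}$; after that, the passage between the $\epsilon$-coordinates and the fundamental-weight coordinates is routine, and the endpoints fall out immediately.
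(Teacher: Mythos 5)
Your proposal is correct and follows essentially the same route as the paper: both invoke Lemma~\ref{moment polytope} and compute $2\rho_P = 2\rho_{B_3} - 2\rho_L$ (with $\Phi_L^+ = \{\alpha_2\}$, so $2\rho_L = \alpha_2$) to get $2\rho_P = 3\varpi_1 + 4\varpi_3$, the only cosmetic difference being that you sum the nine positive roots in the orthonormal $\epsilon$-coordinates and then convert to fundamental weights, whereas the paper substitutes $\alpha_2 = -\varpi_1 + 2\varpi_2 - 2\varpi_3$ directly into $2(\varpi_1+\varpi_2+\varpi_3) - \alpha_2$. All of your intermediate values ($2\rho_{B_3} = 5\epsilon_1 + 3\epsilon_2 + \epsilon_3$, the pairings $\langle\alpha_1^\vee, 2\rho_P\rangle = 3$, $\langle\alpha_3^\vee, 2\rho_P\rangle = 4$, and the endpoints $7\varpi_1$, $7\varpi_3$) check out.
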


\begin{proof}
Since the stabilizer of the open $B$-orbit in $X^2$ is $P=P^{\alpha_1}\cap P^{\alpha_3}$, we know that $\Phi_{P^u} = \Phi^+_{B_3} \setminus \{ \alpha_2 \}$. 
Recall that the fundamental weights $\varpi_1, \varpi_2, \varpi_3$ and the second simple root $\alpha_2$ of $B_3$ can be expressed as 
$\varpi_1 = L_1, \varpi_2 = L_1+L_2, \varpi_3 = \frac{L_1+L_2+L_3}{2}$ and $\alpha_2 = L_2 - L_3$ in terms of an orthonormal basis $\{ L_1, L_2, L_3 \}$ of the dual Cartan subalgebra.
Then, as $\alpha_2 = -\varpi_1 + 2\varpi_2 - 2\varpi_3$ we have 
$$2 \rho_P = 2\rho_{B_3} - 2\rho_L = 2(\varpi_1 + \varpi_2 + \varpi_3) - \alpha_2 = 2(\varpi_1 + \varpi_2 + \varpi_3) - (-\varpi_1 + 2\varpi_2 - 2\varpi_3) = 3\varpi_1 + 4\varpi_3.$$ 
The result follows from Lemma~\ref{moment polytope}. 
\end{proof} 

\begin{proposition} 
\label{GRLB_X2} 
The greatest Ricci lower bound of $X^2$ is $R(X^2) = \displaystyle \frac{20}{21} \approx 0.952$. 
\end{proposition}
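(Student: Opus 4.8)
The plan is to follow exactly the template established in the proof of Proposition~\ref{GRLB_X5}, now specialized to the root system $B_3$ and the moment polytope $\Delta_2$ computed in Proposition~\ref{moment polytope_2}. By Corollary~\ref{formula for greatest Ricci lower bounds}, since $X^2$ carries no \ke metric, the greatest Ricci lower bound is determined purely by the location of the Duistermaat--Heckman barycenter on the line segment $\Delta_2$ relative to the point $A = 2\rho_P = 3\varpi_1 + 4\varpi_3$. Because $\Delta_2$ is one-dimensional, the half-line from $B = \textbf{bar}_{DH}(\Delta_2)$ through $A$ meets $\partial\Delta_2$ at one of the two endpoints, and $R(X^2)$ is the ratio $|\overrightarrow{AQ}|/|\overrightarrow{BQ}|$; thus the entire computation reduces to locating $B$ on the segment.

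First I would parametrize $\Delta_2$ by $\gamma_2(t) = (4+t)\varpi_1 + (3-t)\varpi_3$ for $-3 \le t \le 4$ (so that $A = \gamma_2(0)$), following the convention of Proposition~\ref{moment polytope_2}. Next I would write down the Duistermaat--Heckman polynomial $P_{DH}$ as the product $\prod_{\alpha \in \Phi_{P^u}} \kappa(\alpha, \gamma_2(t))$ over the positive roots of $B_3$ \emph{excluding} $\alpha_2$, exactly as in Proposition~\ref{moment polytope_2} where it was noted that $\Phi_{P^u} = \Phi^+_{B_3} \setminus \{\alpha_2\}$. To evaluate the Killing-form pairings efficiently I would, as in the $X^5$ case, record the values $\kappa(\alpha_i, \varpi_j)$ directly rather than passing to orthogonal coordinates; these follow from the orthonormal-basis expressions $\varpi_1 = L_1$, $\varpi_3 = \tfrac{1}{2}(L_1+L_2+L_3)$ together with the standard $B_3$ positive roots $L_i \pm L_j$ and $L_i$. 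Enumerating $\Phi^+_{B_3}$ (there are nine positive roots, eight after deleting $\alpha_2$) and substituting $\gamma_2(t)$ yields $P_{DH}(\gamma_2(t))$ as an explicit polynomial in $t$.

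With $P_{DH}(\gamma_2(t))$ in hand, the barycenter is obtained from the two elementary integrals
\[
\textbf{bar}_{DH}(\Delta_2) = \frac{\int_{-3}^{4} \gamma_2(t)\, P_{DH}(\gamma_2(t))\, dt}{\int_{-3}^{4} P_{DH}(\gamma_2(t))\, dt},
\]
which I would compute coordinate-wise in the $\{\varpi_1, \varpi_3\}$ basis, determining the unique $t_0$ with $\textbf{bar}_{DH}(\Delta_2) = \gamma_2(t_0)$. Since the barycenter lies to one side of $A = \gamma_2(0)$, Corollary~\ref{formula for greatest Ricci lower bounds} then gives $R(X^2)$ as the ratio of the distance from $A$ to the nearer endpoint against the distance from $B$ to that same endpoint; concretely this is a ratio of the form $\ell/(\ell + |t_0|)$ where $\ell$ is the half-length of $\Delta_2$ on the relevant side. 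I expect the only real obstacle to be arithmetic: the polynomial $P_{DH}(\gamma_2(t))$ has degree eight, so the moment integrals are of degree nine and ten, and care is needed to keep the rational-number bookkeeping exact. The final simplification should collapse to the claimed value $R(X^2) = \tfrac{20}{21}$.
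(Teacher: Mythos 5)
Your plan is, step for step, the paper's own proof: parametrize the segment $\Delta_2$, evaluate the Duistermaat--Heckman polynomial as $\prod_{\alpha \in \Phi_{P^u}} \kappa(\alpha, \cdot)$ with $\Phi_{P^u} = \Phi^+_{B_3} \setminus \{\alpha_2\}$, locate the barycenter coordinate-wise in the $\{\varpi_1, \varpi_3\}$ basis, and apply Corollary~\ref{formula for greatest Ricci lower bounds}. However, there is a concrete error in your setup that would derail the numbers: you parametrize by $\gamma_2(t) = (4+t)\varpi_1 + (3-t)\varpi_3$ and assert $A = 2\rho_P = \gamma_2(0)$, but Proposition~\ref{moment polytope_2} gives $2\rho_P = 3\varpi_1 + 4\varpi_3$, whereas your $\gamma_2(0) = 4\varpi_1 + 3\varpi_3$. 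You have swapped the two coefficients; the correct parametrization (the one the paper uses) is $\gamma_2(t) = (3+t)\varpi_1 + (4-t)\varpi_3$ for $-3 \leq t \leq 4$, with endpoints $7\varpi_3$ at $t = -3$ and $7\varpi_1$ at $t = 4$. With your coefficients the stated range $-3 \leq t \leq 4$ does not even cover $\Delta_2$: it runs from $\varpi_1 + 6\varpi_3$ to $8\varpi_1 - \varpi_3$, overshooting the vertex $7\varpi_1$ (where the factor $\kappa(\alpha_3, \cdot) = \tfrac{1}{2}(3-t)$ turns negative) and never reaching $7\varpi_3$, so the moment integrals themselves would be wrong.

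This is not a cosmetic slip, because every later step hinges on $A = \gamma_2(0)$. In your parametrization $2\rho_P$ actually sits at $t = -1$ and the barycenter at $t = -\tfrac{17}{20}$; if you corrected the range to $-4 \leq t \leq 3$ but kept the belief that $A$ is at $t = 0$, your final formula $\ell/(\ell + |t_0|)$ would output $\tfrac{3}{3 + 17/20} = \tfrac{60}{77} \approx 0.78$ instead of $\tfrac{20}{21}$. Once the parametrization is fixed, your outline coincides with the paper's proof verbatim: one finds $P_{DH}(\gamma_2(t)) = \tfrac{49}{4}(3+t)^2 (4-t)^3 \left(5 + \tfrac{t}{2}\right)$ --- note this has degree six, not eight as you predicted, since the two linear forms attached to $\alpha_1 + \alpha_2 + 2\alpha_3$ and $\alpha_1 + 2\alpha_2 + 2\alpha_3$ are constantly equal to $(3+t) + (4-t) = 7$ along $\Delta_2$ --- then the moments $\tfrac{63}{20}$ and $\tfrac{77}{20}$, the barycenter $\gamma_2\left(\tfrac{3}{20}\right)$, and finally $R(X^2) = \tfrac{3}{3 + 3/20} = \tfrac{20}{21}$.
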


\begin{proof}
By Proposition~\ref{moment polytope_2}, 
the moment polytope $\Delta_2$ is parametrized as 
$$\gamma_2(t) = (3+t)\varpi_1 + (4-t) \varpi_3$$ 
for $-3 \leq t \leq 4$. 
From the relations  
\[
\kappa (\alpha_1, \varpi_1) = 1, \, \kappa (\alpha_2, \varpi_1) = 0, \, \kappa (\alpha_3, \varpi_1) = 0, \quad 
\kappa (\alpha_1, \varpi_3) = 0, \, \kappa (\alpha_2, \varpi_3) = 0, \, \kappa (\alpha_3, \varpi_3) = \frac{1}{2}
\]
and $\Phi_{P^u} = \Phi^+_{B_3} \setminus \{ \alpha_2 \} = \{ \alpha_1, \alpha_3, \alpha_1 + \alpha_2, \alpha_2 + \alpha_3, \alpha_1 + \alpha_2 + \alpha_3, \alpha_2 + 2 \alpha_3, \alpha_1 + \alpha_2 + 2 \alpha_3, \alpha_1 + 2 \alpha_2 + 2 \alpha_3 \}$,  
we have the Duistermaat--Heckman polynomial evaluated on the moment polytope $\Delta_2$ 
\begin{align*}
P_{DH}(\gamma_2(t)) & = \prod_{\alpha \in \Phi_{P^u}} \kappa(\alpha, (3+t) \varpi_1 + (4-t) \varpi_3) \\
& = (3+t) \cdot \frac{1}{2}(4-t) \cdot (3+t) \cdot \frac{1}{2}(4-t) \cdot \left \{ (3+t) + \frac{1}{2}(4-t) \right \} \cdot \left \{ 2 \cdot \frac{1}{2}(4-t) \right \} \\
& \times \left \{ (3+t) + 2 \cdot \frac{1}{2}(4-t) \right \} \cdot \left \{ (3+t) + 2 \cdot \frac{1}{2}(4-t) \right \} \\
& = \frac{49}{4}(3+t)^2 (4-t)^3 \left (5 + \frac{t}{2} \right ). 
\end{align*}
As $\displaystyle \frac{1}{\text{Vol}_{DH}(\Delta_2)} \int_{-3}^{4} (3+t) \cdot P_{DH}(\gamma_2(t)) \, dt = \frac{63}{20}$ and 
$\displaystyle \frac{1}{\text{Vol}_{DH}(\Delta_2)} \int_{-3}^{4} (4-t) \cdot P_{DH}(\gamma_2(t)) \, dt = \frac{77}{20}$, 
we get the barycenter 
$\displaystyle \textbf{bar}_{DH}(\Delta_2) = \frac{63}{20} \varpi_1 + \frac{77}{20} \varpi_3 = \gamma_2 \left(\frac{3}{20}\right)$. 
Therefore, the greatest Ricci lower bound of $X^2$ is $R(X^2) = \displaystyle \frac{3}{3+\frac{3}{20}} = \frac{20}{21}$ by Corollary~\ref{formula for greatest Ricci lower bounds}.
\end{proof}

\subsection{Greatest Ricci lower bound of $X^4$} 
Let $X^4$ be the projective horospherical manifold of type $(F_4, \alpha_2, \alpha_3)$  in Theorem~\ref{horosphercal}. 

\begin{proposition} 
\label{moment polytope_4}
The moment polytope $\Delta_4 = \Delta(X^4, K_{X^4}^{-1})$ is the line segment connecting two points $6 \varpi_2$ and $6 \varpi_3$ in $\mathfrak X(T) \otimes \mathbb R$, that is, 
\[
\Delta(X^4, K_{X^4}^{-1}) = \{ (3\varpi_2 + 3\varpi_3) + t(\varpi_2 - \varpi_3) : -3 \leq t \leq 3 \}.
\] 
\end{proposition}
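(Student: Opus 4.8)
The plan is to apply Lemma~\ref{moment polytope} directly, exactly as in the proofs of Propositions~\ref{moment polytope_2} and~\ref{moment polytope_5}: once $2\rho_P$ is written in the basis of fundamental weights and the two pairings $\langle \alpha_2^\vee, 2\rho_P\rangle$ and $\langle \alpha_3^\vee, 2\rho_P\rangle$ are computed, the claimed line segment follows at once. Thus the whole argument reduces to determining $2\rho_P$ for the submaximal parabolic $P = P^{\alpha_2}\cap P^{\alpha_3}$ of $F_4$.

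First I would identify the Levi factor $L$ of $P$. Since $P = P^{\alpha_2}\cap P^{\alpha_3}$, the simple roots of $L$ are the remaining nodes $\{\alpha_1, \alpha_4\}$ of the $F_4$ Dynkin diagram; as these two nodes are non-adjacent, $L$ has root system of type $A_1 \times A_1$, so that its set of positive roots is $\{\alpha_1, \alpha_4\}$ and hence $2\rho_L = \alpha_1 + \alpha_4$. By Lemma~\ref{barycenter},
\[
2\rho_P = 2\rho_{F_4} - 2\rho_L = 2(\varpi_1 + \varpi_2 + \varpi_3 + \varpi_4) - (\alpha_1 + \alpha_4).
\]

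Next I would convert $\alpha_1$ and $\alpha_4$ into the fundamental-weight basis using the columns of the Cartan matrix of $F_4$, which give $\alpha_1 = 2\varpi_1 - \varpi_2$ and $\alpha_4 = -\varpi_3 + 2\varpi_4$; substituting yields $2\rho_P = 3\varpi_2 + 3\varpi_3$. Finally, from $\langle \alpha_i^\vee, \varpi_j\rangle = \delta_{ij}$ I read off $\langle \alpha_2^\vee, 2\rho_P\rangle = 3$ and $\langle \alpha_3^\vee, 2\rho_P\rangle = 3$, so that Lemma~\ref{moment polytope} delivers the segment $\{(3\varpi_2 + 3\varpi_3) + t(\varpi_2 - \varpi_3) : -3 \le t \le 3\}$, whose endpoints at $t = 3$ and $t = -3$ are $6\varpi_2$ and $6\varpi_3$ respectively.

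The only step requiring attention is the Cartan-matrix bookkeeping for $F_4$: because of the double bond between the long root $\alpha_2$ and the short root $\alpha_3$, the relevant off-diagonal entries are asymmetric, namely $\langle \alpha_2^\vee, \alpha_3\rangle = -1$ while $\langle \alpha_3^\vee, \alpha_2\rangle = -2$, and one must stay consistent about expressing roots in terms of weights via the columns of the Cartan matrix. This is the same mild bookkeeping already met in the $B_3$ case of Proposition~\ref{moment polytope_2}, so I do not anticipate a genuine obstacle; the computation is routine, if careful, linear algebra.
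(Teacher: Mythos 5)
Your proposal is correct and follows essentially the same route as the paper: both compute $2\rho_P = 2\rho_{F_4} - \alpha_1 - \alpha_4 = 3\varpi_2 + 3\varpi_3$ (you merely make explicit the intermediate fact that $2\rho_L = \alpha_1 + \alpha_4$ for the $A_1 \times A_1$ Levi, together with the Cartan-matrix conversions $\alpha_1 = 2\varpi_1 - \varpi_2$ and $\alpha_4 = -\varpi_3 + 2\varpi_4$) and then invoke Lemma~\ref{moment polytope} with $\langle \alpha_2^{\vee}, 2\rho_P\rangle = \langle \alpha_3^{\vee}, 2\rho_P\rangle = 3$. Your cautionary remark about the asymmetric entries $\langle \alpha_2^{\vee}, \alpha_3\rangle = -1$ and $\langle \alpha_3^{\vee}, \alpha_2\rangle = -2$ is accurate, though these entries never actually enter the computation since only the first and fourth columns of the Cartan matrix are needed.
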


\begin{proof}
Since $P = P^{\alpha_2} \cap P^{\alpha_3}$, we know that $\Phi_{P^u} = \Phi^+_{F_4} \setminus \{ \alpha_1, \alpha_4 \}$ 
and $2 \rho_P = 2\rho_{F_4} - \alpha_1 - \alpha_4 = 2(\varpi_1 + \varpi_2 + \varpi_3 + \varpi_4) - (2 \varpi_1 - \varpi_2 - \varpi_3 + 2\varpi_4) = 3\varpi_2 + 3\varpi_3$. 
The result follows from Lemma~\ref{moment polytope}. 
\end{proof}

\begin{proposition} 
\label{GRLB_X4} 
The greatest Ricci lower bound of $X^4$ is $R(X^4) = \displaystyle \frac{178992099}{243545402} \approx 0.734$. 
\end{proposition}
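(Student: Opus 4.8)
The plan is to follow the same three-step recipe used in Propositions~\ref{GRLB_X5} and~\ref{GRLB_X2}, now carried out for the root system $F_4$. First I would read off the parametrization of the moment polytope from Proposition~\ref{moment polytope_4},
$$\gamma_4(t) = (3+t)\varpi_2 + (3-t)\varpi_3, \qquad -3 \leq t \leq 3,$$
and note that $2\rho_P = 3\varpi_2 + 3\varpi_3 = \gamma_4(0)$ sits exactly at the midpoint of the segment, so the point $A$ of Corollary~\ref{formula for greatest Ricci lower bounds} is $\gamma_4(0)$.

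The second and central step is to evaluate the Duistermaat--Heckman polynomial $P_{DH}(\gamma_4(t)) = \prod_{\alpha \in \Phi_{P^u}} \kappa(\alpha, \gamma_4(t))$ over the $22$ roots of $\Phi_{P^u} = \Phi^+_{F_4} \setminus \{\alpha_1, \alpha_4\}$. Since $\alpha_2$ is long and $\alpha_3$ is short, the normalization already fixed in the $B_3$ computation gives $\kappa(\alpha_2, \varpi_2) = 1$, $\kappa(\alpha_3, \varpi_3) = \tfrac12$, and $\kappa(\alpha_i, \varpi_j) = 0$ otherwise; hence for a root $\alpha = \sum_k c_k \alpha_k$ the corresponding factor collapses to the linear form $c_2(3+t) + \tfrac{c_3}{2}(3-t)$, depending only on the pair $(c_2, c_3)$. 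The simplification I would exploit is that this factor is \emph{constant} in $t$ precisely when $c_3 = 2c_2$. Grouping the $22$ roots by their values of $(c_2,c_3)$ then yields
$$P_{DH}(\gamma_4(t)) = 3^7 \, (3+t)^2 (3-t)^2 (9+t)^7 (15+t)^2 (21+t)^2,$$
a polynomial of degree $15$ rather than $22$, the seven roots with $c_3 = 2c_2$ contributing only the constant factors $6$ (six times) and $12$ (once).

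With this closed form in hand, the last step is routine calculus: I would compute $\text{Vol}_{DH}(\Delta_4) = \int_{-3}^{3} P_{DH}(\gamma_4(t))\,dt$ together with the shift $s = \big(\int_{-3}^{3} t\,P_{DH}(\gamma_4(t))\,dt\big)/\text{Vol}_{DH}(\Delta_4)$, so that $\textbf{bar}_{DH}(\Delta_4) = \gamma_4(s)$. Because the even factor $(9-t^2)^2$ is symmetric while $(9+t)^7(15+t)^2(21+t)^2$ is strictly increasing on $[-3,3]$, the weight satisfies $P_{DH}(\gamma_4(t)) > P_{DH}(\gamma_4(-t))$ for $t \in (0,3)$, forcing $s > 0$. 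The ray from $B = \gamma_4(s)$ through $A = \gamma_4(0)$ thus leaves the polytope at the endpoint $Q = \gamma_4(-3)$, and since $\gamma_4$ is affine in $t$, Corollary~\ref{formula for greatest Ricci lower bounds} reduces to the ratio of $t$-increments, $R(X^4) = \tfrac{3}{3+s}$. Evaluating the two integrals produces the stated value $\tfrac{178992099}{243545402}$.

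The main obstacle is purely computational. The factorization above is clean, but expanding $(3+t)^2(3-t)^2(9+t)^7(15+t)^2(21+t)^2$ and integrating the resulting degree-$15$ and degree-$16$ polynomials over $[-3,3]$ by hand is lengthy and error-prone, so I would confirm the final rational numbers with a computer algebra system. The only genuinely structural input is the correct list of the $22$ positive roots of $F_4$ outside $\{\alpha_1,\alpha_4\}$ together with their $\alpha_2$- and $\alpha_3$-coefficients; this enumeration is what produces the multiplicities $2,2,7,2,2$ appearing in the factorization.
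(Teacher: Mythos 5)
Your proposal is correct and follows essentially the same route as the paper's proof: the same parametrization $\gamma_4(t)=(3+t)\varpi_2+(3-t)\varpi_3$, the same product over the $22$ roots of $\Phi_{P^u}=\Phi^+_{F_4}\setminus\{\alpha_1,\alpha_4\}$ grouped by $(c_2,c_3)$ with multiplicities $2,2,4,6,3,2,1,2$ (your factorization $3^7(3+t)^2(3-t)^2(9+t)^7(15+t)^2(21+t)^2$ is exactly the paper's expression with the constant factors from $c_3=2c_2$ and the halves absorbed, merging the $(1,1)$ and $(2,2)$ groups into $(9+t)^7$), and the same reduction via Corollary~\ref{formula for greatest Ricci lower bounds} to $R(X^4)=\tfrac{3}{3+s}$ with $s=\tfrac{64553303}{59664033}$. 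Your explicit monotonicity argument that $s>0$ is a small welcome addition, since the paper only observes this a posteriori from the computed barycenter.
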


\begin{proof}
By Proposition~\ref{moment polytope_4}, 
the moment polytope $\Delta_4$ is parametrized as 
$$\gamma_4(t) = (3+t)\varpi_2 + (3-t) \varpi_3$$ 
for $-3 \leq t \leq 3$. 
From the relations  
\begin{align*}
\kappa (\alpha_1, \varpi_2) = 0, \, \kappa (\alpha_2, \varpi_2) = 1, \, \kappa (\alpha_3, \varpi_2) = 0, \, \kappa (\alpha_4, \varpi_2) = 0, \\ 
\kappa (\alpha_1, \varpi_3) = 0, \, \kappa (\alpha_2, \varpi_3) = 0, \, \kappa (\alpha_3, \varpi_3) = \frac{1}{2}, \, \kappa (\alpha_4, \varpi_3) = 0
\end{align*}
and $\Phi_{P^u} = \Phi^+_{F_4} \setminus \{ \alpha_1, \alpha_4 \} 
= \{ \alpha_2, \alpha_3, \alpha_1 + \alpha_2, \alpha_2 + \alpha_3, \alpha_3 + \alpha_4, \alpha_1 + \alpha_2 + \alpha_3, \alpha_2 + 2 \alpha_3, 
\alpha_2 + \alpha_3 + \alpha_4, \alpha_1 + \alpha_2 + 2 \alpha_3, \alpha_1 + \alpha_2 + \alpha_3 + \alpha_4, \alpha_2 + 2\alpha_3 + \alpha_4, 
\alpha_1 + 2\alpha_2 + 2 \alpha_3, \alpha_1 + \alpha_2 + 2 \alpha_3 + \alpha_4, \alpha_2 + 2 \alpha_3 + 2 \alpha_4, \alpha_1 + 2 \alpha_2 + 2 \alpha_3 + \alpha_4, \alpha_1 + \alpha_2 + 2 \alpha_3 + 2 \alpha_4, \alpha_1 + 2 \alpha_2 + 3 \alpha_3 + \alpha_4, \alpha_1 + 2 \alpha_2 + 2 \alpha_3 + 2 \alpha_4, \alpha_1 + 2 \alpha_2 + 3 \alpha_3 + 2 \alpha_4, \alpha_1 + 2 \alpha_2 + 4 \alpha_3 + 2 \alpha_4, \alpha_1 + 3 \alpha_2 + 4 \alpha_3 + 2 \alpha_4, 2 \alpha_1 + 3 \alpha_2 + 4 \alpha_3 + 2 \alpha_4 \}$, 
we have the Duistermaat--Heckman polynomial evaluated on the moment polytope $\Delta_4$ 
\begin{align*}
P_{DH}(\gamma_4(t)) & = \prod_{\alpha \in \Phi_{P^u}} \kappa(\alpha, (3+t) \varpi_2 + (3-t) \varpi_3) \\
& = (3+t)^2 \left \{ \frac{1}{2}(3-t) \right \}^2 \left \{ (3+t) + \frac{1}{2}(3-t) \right \}^4 \left \{ (3+t) + 2 \cdot \frac{1}{2}(3-t) \right \}^6 \left \{2(3+t) + 2 \cdot \frac{1}{2}(3-t) \right \}^3 \\
& \times \left \{2(3+t) + 3 \cdot \frac{1}{2}(3-t) \right \}^2 \left \{2(3+t) + 4 \cdot \frac{1}{2}(3-t) \right \} \left \{3(3+t) + 4 \cdot \frac{1}{2}(3-t) \right \}^2. 
\end{align*}
As $\displaystyle \frac{1}{\text{Vol}_{DH}(\Delta_4)} \int_{-3}^{3} (3+t) \cdot P_{DH}(\gamma_4(t)) \, dt = \frac{243545402}{59664033}$ and 
$\displaystyle \frac{1}{\text{Vol}_{DH}(\Delta_4)} \int_{-3}^{3} (3-t) \cdot P_{DH}(\gamma_4(t)) \, dt = \frac{114438796}{59664033}$, 
we get the barycenter 
$\displaystyle \textbf{bar}_{DH}(\Delta_4) = \frac{243545402}{59664033} \varpi_2 + \frac{114438796}{59664033} \varpi_3 = \gamma_4 \left(\frac{64553303}{59664033} \right)$. 
Therefore, the greatest Ricci lower bound of $X^4$ is $R(X^4) = \displaystyle \frac{3}{3+\frac{64553303}{59664033}} = \frac{178992099}{243545402}$ by Corollary~\ref{formula for greatest Ricci lower bounds}.
\end{proof}

\subsection{Greatest Ricci lower bounds of $X^1(n)$} 
For $n \geq 3$, let $X^1(n)$ be the projective horospherical manifold of type $(B_n, \alpha_{n-1}, \alpha_n)$  in Theorem~\ref{horosphercal}.
 
\begin{proposition} 
\label{moment polytope_1}
The moment polytope $\Delta_1 = \Delta(X^1(n), K_{X^1(n)}^{-1})$ is the line segment connecting two points $(n+2) \varpi_{n-1}$ and $(n+2) \varpi_n$ in $\mathfrak X(T) \otimes \mathbb R$, that is, 
\[
\Delta(X^1(n), K_{X^1(n)}^{-1}) = \{ (n \varpi_{n-1} + 2 \varpi_n) + t(\varpi_{n-1} - \varpi_n) : -n \leq t \leq 2 \}.
\] 
\end{proposition}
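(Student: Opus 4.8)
The plan is to reduce everything to Lemma~\ref{moment polytope}, which for a type $(G,\alpha_i,\alpha_j)$ manifold already exhibits the moment polytope as the segment $\{2\rho_P + t(\varpi_i-\varpi_j) : -\langle\alpha_i^\vee,2\rho_P\rangle \le t \le \langle\alpha_j^\vee,2\rho_P\rangle\}$ with $P = P^{\alpha_i}\cap P^{\alpha_j}$. Here $i=n-1$ and $j=n$, so the whole proof amounts to computing the single vector $2\rho_P$ and the two integers $\langle\alpha_{n-1}^\vee,2\rho_P\rangle$ and $\langle\alpha_n^\vee,2\rho_P\rangle$, exactly as in the proofs of Propositions~\ref{moment polytope_2} and~\ref{moment polytope_4}. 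The only new feature is that the Levi factor is now of type $A_{n-2}$ rather than a product of rank-one pieces, so the sum of its positive roots requires a short general computation.

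First I would fix the standard realization of $B_n$ in an orthonormal basis $L_1,\dots,L_n$ of the dual Cartan subalgebra: the simple roots are $\alpha_i = L_i - L_{i+1}$ for $i<n$ and $\alpha_n = L_n$, the fundamental weights are $\varpi_i = L_1 + \cdots + L_i$ for $i < n$ and $\varpi_n = \tfrac{1}{2}(L_1 + \cdots + L_n)$, and the coroots are $\alpha_i^\vee = L_i - L_{i+1}$ for $i<n$ while $\alpha_n^\vee = 2L_n$ (the one place where the short-root normalization enters). Since $P = P^{\alpha_{n-1}}\cap P^{\alpha_n}$, the Levi factor $L$ has simple roots $\{\alpha_1,\dots,\alpha_{n-2}\}$, so its positive roots are $\{L_i - L_j : 1 \le i < j \le n-1\}$, which sit entirely in $\mathrm{span}(L_1,\dots,L_{n-1})$.

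Next I would use Lemma~\ref{barycenter} in the form $2\rho_P = 2\rho_{B_n} - 2\rho_L$ and evaluate both Weyl vectors coordinatewise. Summing the positive roots of $B_n$ gives $2\rho_{B_n} = \sum_{i=1}^n (2n-2i+1)L_i$, and summing the positive roots of the $A_{n-2}$ Levi gives $2\rho_L = \sum_{i=1}^{n-1}(n-2i)L_i$, with no $L_n$-component. Subtracting yields $2\rho_P = (n+1)\sum_{i=1}^{n-1} L_i + L_n$, which I would rewrite in the fundamental-weight basis as $n\varpi_{n-1} + 2\varpi_n$. Pairing this with the two relevant coroots in the orthonormal inner product gives $\langle\alpha_{n-1}^\vee, 2\rho_P\rangle = (n+1) - 1 = n$ and $\langle\alpha_n^\vee, 2\rho_P\rangle = 2\cdot 1 = 2$. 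Substituting $2\rho_P = n\varpi_{n-1} + 2\varpi_n$ together with the bounds $-n$ and $2$ into Lemma~\ref{moment polytope} produces exactly the claimed segment, whose endpoints at $t=-n$ and $t=2$ are $(n+2)\varpi_{n-1}$ and $(n+2)\varpi_n$.

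I expect no genuine obstacle: the argument is a bookkeeping computation parallel to the $X^2$ and $X^4$ cases, differing only in that $2\rho_L$ is now the Weyl vector of a full $A_{n-2}$ and so must be summed rather than read off. The one point requiring care is the normalization $\alpha_n^\vee = 2L_n$ coming from $\alpha_n$ being a short root of $B_n$; using $\alpha_n$ in place of $\alpha_n^\vee$ would spuriously halve the correct factor and corrupt the upper bound. A secondary check is the $L_n$-bookkeeping, since $2\rho_L$ lives in $\mathrm{span}(L_1,\dots,L_{n-1})$ while $2\rho_{B_n}$ carries a nontrivial $L_n$-term, and it is precisely this asymmetry that makes the two endpoint values $n$ and $2$ come out different.
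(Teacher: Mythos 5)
Your proposal is correct and takes essentially the same route as the paper's proof: both reduce the statement to Lemma~\ref{moment polytope} and compute $2\rho_P = 2\rho_{B_n} - 2\rho_L$ coordinatewise in the orthonormal basis $L_1,\dots,L_n$, where the Levi of $P^{\alpha_{n-1}}\cap P^{\alpha_n}$ is of type $A_{n-2}$, arriving at $2\rho_P = n\varpi_{n-1}+2\varpi_n$. The only difference is cosmetic: you additionally spell out the coroot normalizations and the pairings $\langle\alpha_{n-1}^\vee,2\rho_P\rangle = n$ and $\langle\alpha_n^\vee,2\rho_P\rangle = 2$, which the paper leaves implicit since they follow at once from $\langle\alpha_i^\vee,\varpi_j\rangle=\delta_{ij}$.
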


\begin{proof}
Let $L_1, L_2, \cdots, L_n$ be an orthonormal basis of $\mathfrak X(T) \otimes \mathbb R$. 
Then the set of all positive roots of $B_n$ consists of the following: for $1 \leq i < j \leq n$,
$$L_i-L_j, \quad L_i+L_j, \quad L_i.$$
Hence, $2\rho_G = \sum_{i<j}(L_i-L_j) + \sum_{i<j}(L_i+L_j) + \sum_{i=1}^{n}L_i = \sum_{i=1}^{n}(2n-2i+1)L_i$.
On the other hand, since the reductive part $L$ of $P = P^{\alpha_{n-1}} \cap P^{\alpha_n}$ is of type $A_{n-2}$, 
the positive roots of $L$ are $L_i-L_j$ for $1 \leq i<j \leq n-1$ and $2\rho_L = \sum_{1 \leq i<j \leq n-1}(L_i-L_j) = \sum_{i=1}^{n-1} (n-2i) L_i$. 
Therefore, $$2\rho_G - 2\rho_L = \sum_{i=1}^{n} (2n-2i+1) L_i - \sum_{i=1}^{n-1} (n-2i) L_i = n \sum_{i=1}^{n-1} L_i + \sum_{i=1}^{n}L_i = n \varpi_{n-1} + 2 \varpi_{n}.$$ 
The result follows from Lemma~\ref{moment polytope}.
\end{proof}

\begin{proposition} 
\label{GRLB_X1} 
The greatest Ricci lower bound of $X^1(n)$ is 
$$R(X^1(n)) = \displaystyle \frac{n \displaystyle \int_{-n}^{2} (2-t) (n+t)^{n-1} (t+2n+2)^{\frac{n(n-1)}{2}} \, dt}{\displaystyle \int_{-n}^{2} (2-t) (n+t)^{n} (t+2n+2)^{\frac{n(n-1)}{2}} \, dt}.$$ 
\end{proposition}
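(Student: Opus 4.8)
The plan is to follow the same strategy as in the cases of $X^5$, $X^2$ and $X^4$: parametrize the moment polytope $\Delta_1$ as a line segment, assemble the Duistermaat--Heckman polynomial by evaluating the Killing form against each root of $\Phi_{P^u}$, and then apply Corollary~\ref{formula for greatest Ricci lower bounds}. By Proposition~\ref{moment polytope_1} I parametrize
\[
\gamma_1(t) = (n+t)\varpi_{n-1} + (2-t)\varpi_n, \qquad -n \le t \le 2,
\]
so that the point $A = 2\rho_P$ corresponds to $t = 0$ and the two endpoints to $t = 2$ and $t = -n$.

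First I would identify $\Phi_{P^u}$. Since $P = P^{\alpha_{n-1}} \cap P^{\alpha_n}$ has Levi factor of type $A_{n-2}$, with $\Phi_L = \{ L_i - L_j : 1 \le i < j \le n-1 \}$, the set $\Phi_{P^u} = \Phi^+_{B_n} \setminus \Phi_L$ consists of the $n-1$ roots $L_i - L_n$, the $\binom{n}{2}$ roots $L_i + L_j$, and the $n$ short roots $L_i$; a count gives $|\Phi_{P^u}| + 1 = n(n+3)/2 = \dim X^1(n)$, as it must. Writing $\gamma_1(t)$ in the orthonormal basis as
\[
\gamma_1(t) = \Big(n + 1 + \tfrac{t}{2}\Big)\sum_{i=1}^{n-1} L_i + \Big(1 - \tfrac{t}{2}\Big) L_n,
\]
I evaluate $\kappa(\alpha, \gamma_1(t))$ root by root: each $L_i - L_n$ gives the factor $n+t$; the roots $L_i + L_j$ with $i,j \le n-1$ give $2n+2+t$ while those with $j = n$ give the constant $n+2$; the short roots give $n+1+\tfrac{t}{2} = \tfrac12(2n+2+t)$ for $i \le n-1$ and $1 - \tfrac{t}{2} = \tfrac12(2-t)$ for $i = n$. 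Collecting these and absorbing all constant factors and powers of $2$, which cancel in the ratio below, the Duistermaat--Heckman polynomial becomes, up to a positive constant,
\[
P_{DH}(\gamma_1(t)) \propto (2-t)(n+t)^{n-1}(t+2n+2)^{n(n-1)/2}.
\]
The key bookkeeping step here is that the factor $t+2n+2$ picks up the exponent $\binom{n-1}{2}$ from the long roots $L_i+L_j$ together with the exponent $n-1$ from the short roots $L_i$ $(i\le n-1)$, and $\binom{n-1}{2} + (n-1) = \binom{n}{2}$.

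Finally I would apply Corollary~\ref{formula for greatest Ricci lower bounds}. Since $\gamma_1$ is affine, the barycenter equals $\gamma_1(t_0)$ with $t_0 = \big(\int_{-n}^2 t\,P_{DH}\,dt\big)\big/\big(\int_{-n}^2 P_{DH}\,dt\big)$, so the coefficient of $\varpi_{n-1}$ in $\textbf{bar}_{DH}(\Delta_1)$ is $n + t_0$. A direct check shows $t_0 > 0$ (consistent with $R(X^1(n)) < 1$ for the non-K\"ahler--Einstein $X^1(n)$), hence the half-line from $B$ through $A$ meets $\partial\Delta_1$ at the endpoint $Q = \gamma_1(-n)$, giving $|\overrightarrow{AQ}| : |\overrightarrow{BQ}| = n : (n+t_0)$. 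Therefore
\[
R(X^1(n)) = \frac{n}{n + t_0} = \frac{n \int_{-n}^2 P_{DH}\,dt}{\int_{-n}^2 (n+t)\,P_{DH}\,dt},
\]
and substituting the expression for $P_{DH}$ yields exactly the claimed ratio. I expect the main obstacle to be purely the combinatorial enumeration of $\Phi_{P^u}$ and the pairing computation that produces the clean power $(t+2n+2)^{n(n-1)/2}$; no genuinely hard analysis arises, since the resulting one-dimensional integrals have no elementary closed form for general $n$ and are left unevaluated.
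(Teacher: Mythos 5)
Your proposal reproduces the paper's proof in all but one step: the parametrization $\gamma_1(t)=(n+t)\varpi_{n-1}+(2-t)\varpi_n$, the enumeration of $\Phi_{P^u}$ (yours in the orthonormal basis $L_i$, the paper's by the coefficients $(c_{n-1},c_n)$ of each positive root — exactly equivalent bookkeepings, and your root-by-root pairings $n+t$, $2n+2+t$, $n+2$, $\tfrac12(2n+2+t)$, $\tfrac12(2-t)$ with exponent $\binom{n-1}{2}+(n-1)=\binom{n}{2}$ all agree with the paper), the resulting $P_{DH}(\gamma_1(t))\propto(2-t)(n+t)^{n-1}(t+2n+2)^{n(n-1)/2}$, and the application of Corollary~\ref{formula for greatest Ricci lower bounds}. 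Your final algebra, writing $R=n\int P_{DH}\,dt\,\big/\int(n+t)P_{DH}\,dt$ directly, is marginally slicker than the paper's route through $2-t_0$ but mathematically identical.

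The genuine gap is the sentence ``a direct check shows $t_0>0$.'' This is precisely the content of the paper's Lemma~\ref{inequalty:X1}, and it is not a routine verification: it amounts to showing $\int_{-n}^{2} t\,(2-t)(n+t)^{n-1}(t+2n+2)^{n(n-1)/2}\,dt>0$, which the paper proves by bounding the increasing factor $(t+2n+2)^{n(n-1)/2}$ by $(2n+2)^{n(n-1)/2}$ pointwise according to the sign of $t$, and then computing \emph{exactly} that $\int_{-n}^{2} t\,(2-t)(n+t)^{n-1}\,dt=0$ — i.e., the measure $(2-t)(n+t)^{n-1}\,dt$ happens to have barycenter precisely at $t=0$, so the monotone extra factor tips the mean strictly positive. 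The sign of $t_0$ is not cosmetic: it determines which endpoint of $\Delta_1$ the half-line from $B$ through $A$ exits, hence whether $R=n/(n+t_0)$ or $R=2/(2-t_0)$; without it the claimed formula is not established. Note also that your parenthetical justification — consistency with $R(X^1(n))<1$ — carries no information about the sign, since $t_0<0$ would equally yield $R<1$, just through the other endpoint.
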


\begin{proof}
By Proposition~\ref{moment polytope_1}, 
the moment polytope $\Delta_1$ is parametrized as 
$$\gamma_1(t) = (n+t)\varpi_{n-1} + (2-t) \varpi_n$$ 
for $-n \leq t \leq 2$. 
In order to obtain the Duistermaat--Heckman polynomial evaluated on the moment polytope $\Delta_1$, 
the number of roots in $\Phi_{P^u}$ for each value of the coefficients of $\alpha_{n-1}, \alpha_n$ must be counted specifically:  
\begin{enumerate}
\item[(i)] the cardinality of the set $\{ \sum_{i=1}^n c_i \alpha_i \in \Phi^+ : c_{n-1} = 1, c_n = 0 \}$ is $n-1$, 
\item[(ii)] the cardinality of the set $\{ \sum_{i=1}^n c_i \alpha_i \in \Phi^+ : c_{n-1} = 0, c_n = 1 \}$ is $1$, 
\item[(iii)] the cardinality of the set $\{ \sum_{i=1}^n c_i \alpha_i \in \Phi^+ : c_{n-1} = 1, c_n = 1 \}$ is $n-1$, 
\item[(iv)] the cardinality of the set $\{ \sum_{i=1}^n c_i \alpha_i \in \Phi^+ : c_{n-1} = 1, c_n = 2 \}$ is $n-1$, and 
\item[(v)] the cardinality of the set $\{ \sum_{i=1}^n c_i \alpha_i \in \Phi^+ : c_{n-1} = 2, c_n = 2 \}$ is $\frac{(n-1)(n-2)}{2}$. 
\end{enumerate}
Then, from $\kappa (\alpha_{n-1}, \varpi_{n-1}) = 1$ and $\kappa (\alpha_n, \varpi_n) = \frac{1}{2}$ we have 
\begin{align*}
P_{DH}(\gamma_1(t)) & = \prod_{\alpha \in \Phi_{P^u}} \kappa(\alpha, (n+t) \varpi_{n-1} + (2-t) \varpi_n) \\
& = (n+t)^{n-1} \cdot \frac{1}{2}(2-t) \left \{ (n+t) + \frac{1}{2}(2-t) \right \}^{n-1} \\
& \times \left \{ (n+t) + 2 \cdot \frac{1}{2}(2-t) \right \}^{n-1} \left \{ 2(n+t) + 2 \cdot \frac{1}{2}(2-t) \right \}^{\frac{(n-1)(n-2)}{2}} \\
& = (n+t)^{n-1} \cdot \frac{1}{2} (2-t) \left ( \frac{1}{2} t + n+1 \right )^{n-1}(n+2)^{n-1} (t+2n+2)^{\frac{(n-1)(n-2)}{2}} \\
& = \frac{(n+2)^{n-1}}{2^n} (2-t) (n+t)^{n-1} (t+2n+2)^{\frac{n(n-1)}{2}}. 
\end{align*}
As $\displaystyle \frac{1}{\text{Vol}_{DH}(\Delta_1)} \int_{-n}^{2} (2-t) \cdot P_{DH}(\gamma_1(t)) \, dt = \frac{\int_{-n}^{2} (2-t)^2 (n+t)^{n-1} (t+2n+2)^{\frac{n(n-1)}{2}} \, dt }{\int_{-n}^{2} (2-t) (n+t)^{n-1} (t+2n+2)^{\frac{n(n-1)}{2}} \, dt }$, 
we get the barycenter 
$\displaystyle \textbf{bar}_{DH}(\Delta_1) = \gamma_1 \left( 2-\frac{\int_{-n}^{2} (2-t)^2 (n+t)^{n-1} (t+2n+2)^{\frac{n(n-1)}{2}} \, dt }{\int_{-n}^{2} (2-t) (n+t)^{n-1} (t+2n+2)^{\frac{n(n-1)}{2}} \, dt } \right)$
of the moment polytope $\Delta_1$ with respect to the Duistermaat--Heckman measure.  
Since $2 - \displaystyle \frac{\int_{-n}^{2} (2-t)^2 (n+t)^{n-1} (t+2n+2)^{\frac{n(n-1)}{2}} \, dt }{\int_{-n}^{2} (2-t) (n+t)^{n-1} (t+2n+2)^{\frac{n(n-1)}{2}} \, dt } > 0$
by Lemma~\ref{inequalty:X1}, 
the greatest Ricci lower bound of $X^1(n)$ is 
\begin{align*}
R(X^1(n)) & = \displaystyle \frac{n}{\left (2 - \frac{\int_{-n}^{2} (2-t)^2 (n+t)^{n-1} (t+2n+2)^{\frac{n(n-1)}{2}} \, dt }{\int_{-n}^{2} (2-t) (n+t)^{n-1} (t+2n+2)^{\frac{n(n-1)}{2}} \, dt } \right ) - (-n)} \\
& = \displaystyle \frac{n \int_{-n}^{2} (2-t) (n+t)^{n-1} (t+2n+2)^{\frac{n(n-1)}{2}} \, dt}{(n+2) \int_{-n}^{2} (2-t) (n+t)^{n-1} (t+2n+2)^{\frac{n(n-1)}{2}} \, dt - \int_{-n}^{2} (2-t)^2 (n+t)^{n-1} (t+2n+2)^{\frac{n(n-1)}{2}} \, dt}\\
& = \displaystyle \frac{n \int_{-n}^{2} (2-t) (n+t)^{n-1} (t+2n+2)^{\frac{n(n-1)}{2}} \, dt}{\int_{-n}^{2} (2-t) (n+t)^{n} (t+2n+2)^{\frac{n(n-1)}{2}} \, dt}
\end{align*}
by Corollary~\ref{formula for greatest Ricci lower bounds}.
\end{proof}

\begin{lemma} 
\label{inequalty:X1} 
For any $n \geq 3$, the inequality 
$$\displaystyle \int_{-n}^{2} (2-t)^2 (n+t)^{n-1} (t+2n+2)^{\frac{n(n-1)}{2}} \, dt  < 2 \int_{-n}^{2} (2-t) (n+t)^{n-1} (t+2n+2)^{\frac{n(n-1)}{2}} \, dt$$ holds. 
\end{lemma}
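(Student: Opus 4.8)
The plan is to reduce the stated inequality to the positivity of a single first moment and then obtain that positivity from a monotone correlation inequality. Writing $N=\frac{n(n-1)}{2}$ and subtracting the left-hand side from (twice) the right-hand side, the difference equals
\[
\int_{-n}^{2} \bigl(2(2-t)-(2-t)^2\bigr)(n+t)^{n-1}(t+2n+2)^{N}\,dt=\int_{-n}^{2} t\,(2-t)\,(n+t)^{n-1}(t+2n+2)^{N}\,dt,
\]
since $2(2-t)-(2-t)^2=(2-t)\bigl(2-(2-t)\bigr)=t(2-t)$. Hence the lemma is equivalent to the assertion that this last integral, call it $\Phi(n)$, is strictly positive.

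The key observation I would isolate is that the ``base'' weight $(2-t)(n+t)^{n-1}$ already has barycenter $0$ on $[-n,2]$; that is,
\[
\int_{-n}^{2} t\,(2-t)\,(n+t)^{n-1}\,dt=0.
\]
This I would verify by the affine substitution $t=-n+(n+2)s$, $s\in[0,1]$, under which $2-t=(n+2)(1-s)$ and $n+t=(n+2)s$, reducing the integral to a constant multiple of $(n+2)\int_0^1 s^{n}(1-s)\,ds-n\int_0^1 s^{n-1}(1-s)\,ds$; evaluating the two Beta integrals as $\frac{1}{(n+1)(n+2)}$ and $\frac{1}{n(n+1)}$ makes the combination collapse to $\frac{1}{n+1}-\frac{1}{n+1}=0$.

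With this in hand I would finish by a correlation (Chebyshev integral) inequality. Let $d\mu_0=(2-t)(n+t)^{n-1}\,dt$, a positive measure on $[-n,2]$ with strictly positive density on the interior, and write $\bar\mu_0$ for its normalization to a probability measure. The functions $f(t)=t$ and $g(t)=(t+2n+2)^{N}$ are both strictly increasing on $[-n,2]$ (the latter because $t+2n+2>0$ there). Chebyshev's inequality, in the form
\[
\int f g\,d\bar\mu_0-\int f\,d\bar\mu_0\int g\,d\bar\mu_0=\tfrac12\iint\bigl(f(x)-f(y)\bigr)\bigl(g(x)-g(y)\bigr)\,d\bar\mu_0(x)\,d\bar\mu_0(y)\ge 0,
\]
then gives $\int fg\,d\bar\mu_0\ge \int f\,d\bar\mu_0\int g\,d\bar\mu_0$. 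By the previous step $\int f\,d\bar\mu_0=0$, so $\Phi(n)$ is a positive multiple of $\int fg\,d\bar\mu_0\ge 0$; and since $f$ and $g$ are both non-constant and $\bar\mu_0$ has full support, the double integral is strictly positive, yielding $\Phi(n)>0$.

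The conceptual heart of the argument — and the step I expect to require the cleverness rather than routine computation — is the vanishing of the barycenter of the base weight, which turns a delicate quantitative ``concentration on the right'' phenomenon (needed because the interval $[-n,2]$ is heavily weighted toward its long left half in Lebesgue terms) into a one-line consequence of monotonicity. Everything else is either the elementary algebraic reduction to $\Phi(n)$ or the standard Beta-integral evaluation; the only point to state with care is the strictness in Chebyshev's inequality, which holds here precisely because neither $t$ nor $(t+2n+2)^{N}$ is $\bar\mu_0$-almost-everywhere constant.
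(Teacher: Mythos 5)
Your proof is correct and follows essentially the same route as the paper's: both reduce the claim to the positivity of $\int_{-n}^{2} t\,(2-t)(n+t)^{n-1}(t+2n+2)^{\frac{n(n-1)}{2}}\,dt$ and rest on the same key fact that the base weight has vanishing first moment, $\int_{-n}^{2} t\,(2-t)(n+t)^{n-1}\,dt = 0$ (the paper gets this via the substitution $s=n+t$ and a direct antiderivative, you via Beta integrals). The only difference is in packaging the final monotonicity step: where you invoke the Chebyshev correlation inequality for the increasing functions $t$ and $(t+2n+2)^{\frac{n(n-1)}{2}}$, the paper makes the equivalent elementary pointwise comparison of $(t+2n+2)^{\frac{n(n-1)}{2}}$ with its value $(2n+2)^{\frac{n(n-1)}{2}}$ at $t=0$, using the sign of $t$ on each side of $0$.
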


\begin{proof}
It suffices to show that 
$\displaystyle \int_{-n}^{2} t (2-t) (n+t)^{n-1} (t+2n+2)^{\frac{n(n-1)}{2}} \, dt > 0$ for $n \geq 3$. 
For a fixed $n$, since $t+2n+2$ is an increasing function, $t (2-t) (n+t)^{n-1} (t+2n+2)^{\frac{n(n-1)}{2}} > t (2-t) (n+t)^{n-1} (2n+2)^{\frac{n(n-1)}{2}}$ for $0 < t < 2$. 
On the other hand, when $-n < t < 0$ we also have the same inequality 
\newline 
$t (2-t) (n+t)^{n-1} (t+2n+2)^{\frac{n(n-1)}{2}} > t (2-t) (n+t)^{n-1} (2n+2)^{\frac{n(n-1)}{2}}$ since $t<0$. 
Thus, 
\begin{align*}
& \int_{-n}^{2} t (2-t) (n+t)^{n-1} (t+2n+2)^{\frac{n(n-1)}{2}} \, dt \\
& = \int_{-n}^{0} t (2-t) (n+t)^{n-1} (t+2n+2)^{\frac{n(n-1)}{2}} \, dt + \int_{0}^{2} t (2-t) (n+t)^{n-1} (t+2n+2)^{\frac{n(n-1)}{2}} \, dt  \\
& > \int_{-n}^{0} t (2-t) (n+t)^{n-1} (2n+2)^{\frac{n(n-1)}{2}} \, dt + \int_{0}^{2} t (2-t) (n+t)^{n-1} (2n+2)^{\frac{n(n-1)}{2}} \, dt  \\
& = (2n+2)^{\frac{n(n-1)}{2}} \int_{-n}^{2} t (2-t) (n+t)^{n-1} \, dt \\
& = (2n+2)^{\frac{n(n-1)}{2}} \int_{0}^{n+2} (s-n) (n+2-s) s^{n-1} \, ds \qquad \text{ (let $s:= n+t$) } \\
& = (2n+2)^{\frac{n(n-1)}{2}} \int_{0}^{n+2} \{-s^{n+1} + 2(n+1)s^{n} - n(n+2)s^{n-1} \} \, ds \\
& = (2n+2)^{\frac{n(n-1)}{2}} \Big [ - \frac{s^{n+2}}{n+2} + 2(n+1) \cdot \frac{s^{n+1}}{n+1} - n(n+2) \cdot \frac{s^{n}}{n} \Big ]_{0}^{n+2} = 0 
\end{align*}
as claimed. 
The proof is complete. 
\end{proof}

The approximate values of the greatest Ricci lower bounds $R(X^1(n))$ for some $n$ are summarized as shown in the following (the first row of Table \ref{table2}). 

\begin{center}
\begin{tabular}{c c c c c c c c c c c}
		\toprule
		$n$ & $3$   & $4$ & $5$ & $6$ & $7$ & $10$ & $20$ & $30$ & $50$ & $70$
\\
		\midrule 
	$R(X^1(n))$	&	$0.8955$	&	$0.8755$  &	$0.8686$ 	&	$0.8685$ 	&	$0.8715$	& 	$0.8863$  &	$0.9251$ 	&	$0.9451$ 	&	$0.9644$ 	&	$0.9737$
\\
		\bottomrule
\end{tabular}
\end{center}

\vskip 1em 

Interestingly, we observe that the value $R(X^1(n))$ decreases from $n=3$ to $6$ and then increases again from $n=7$.
In addition, $R(X^1(n))$ becomes arbitrarily close to $1$ as $n$ becomes larger and larger. 

\begin{corollary} 
\label{limit of R(X1)}
The greatest Ricci lower bound $R(X^1(n))$ of $X^1(n)$ converges to $1$ as $n$ increases, that is, $\displaystyle \lim_{n \to \infty} R(X^1(n)) = 1$. 
\end{corollary}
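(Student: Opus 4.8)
The plan is to recast the ratio in Proposition~\ref{GRLB_X1} as $n$ divided by $n$ plus a weighted average of the variable $t$, and then to trap that average in a fixed bounded window so that a squeeze argument finishes the job. Concretely, set
$$w(t) := (2-t)(n+t)^{n-1}(t+2n+2)^{\frac{n(n-1)}{2}},$$
so that the numerator of $R(X^1(n))$ is exactly $n \int_{-n}^{2} w(t)\,dt$. Splitting the extra factor in the denominator as $(n+t)^{n} = (n+t)^{n-1}(n+t)$ and using linearity of the integral gives
$$\int_{-n}^{2} (n+t)\,w(t)\,dt = n\int_{-n}^{2} w(t)\,dt + \int_{-n}^{2} t\,w(t)\,dt,$$
whence
$$R(X^1(n)) = \frac{n}{\,n + m_n\,}, \qquad m_n := \frac{\int_{-n}^{2} t\,w(t)\,dt}{\int_{-n}^{2} w(t)\,dt}.$$
Thus everything reduces to controlling the $w$-weighted mean $m_n$ of $t$ on the interval $[-n,2]$.

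The next step is to observe that $m_n$ lies in a fixed bounded interval, uniformly in $n$. Since $w(t) \geq 0$ throughout $[-n,2]$ (it vanishes at the two endpoints and is strictly positive in between) and $t \leq 2$ there, interpreting $m_n$ as an honest weighted mean immediately yields $m_n \leq 2$. For the lower bound I would invoke Lemma~\ref{inequalty:X1}: rearranging its inequality shows that $\int_{-n}^{2} t\,(2-t)(n+t)^{n-1}(t+2n+2)^{\frac{n(n-1)}{2}}\,dt > 0$, which is precisely the statement that $\int_{-n}^{2} t\,w(t)\,dt > 0$, so $m_n > 0$. Combining the two bounds gives $0 < m_n \leq 2$ for every $n \geq 3$.

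Feeding these bounds into the closed form produces the squeeze
$$\frac{n}{n+2} \leq R(X^1(n)) < 1,$$
and letting $n \to \infty$, since $\tfrac{n}{n+2} \to 1$, forces $R(X^1(n)) \to 1$, as desired.

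The essential content here is the algebraic reduction to the single scalar $m_n$, after which the result is almost immediate. I expect the only point requiring care to be the legitimacy of treating $m_n$ as a genuine weighted average, i.e.\ confirming that $w$ is nonnegative and integrable on $[-n,2]$, which is what makes the \emph{trivial} upper bound $m_n \leq 2$ (coming from $t \leq 2$ on the support) valid. Notably, no delicate Laplace-type concentration estimate for the mass of $w$ near the right endpoint is needed; the crude pair of bounds $0 < m_n \leq 2$, the lower one already supplied by Lemma~\ref{inequalty:X1}, suffices to pin $R(X^1(n))$ between $\tfrac{n}{n+2}$ and $1$.
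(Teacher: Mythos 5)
Your proposal is correct and takes essentially the same route as the paper: writing $R(X^1(n)) = \frac{n}{n+m_n}$ is just a repackaging of the paper's formula $R(X^1(n)) = \frac{n}{n+2-\frac{\int_{-n}^{2}(2-t)^2(n+t)^{n-1}(t+2n+2)^{n(n-1)/2}\,dt}{\int_{-n}^{2}(2-t)(n+t)^{n-1}(t+2n+2)^{n(n-1)/2}\,dt}}$ (one has $m_n = 2 - \frac{\int(2-t)w}{\int w}$), and your bound $m_n \le 2$ is exactly the paper's positivity-of-the-integrand observation yielding the same squeeze $\frac{n}{n+2} \le R(X^1(n)) \le 1$. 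The only cosmetic difference is that you obtain the upper bound $R(X^1(n)) < 1$ from Lemma~\ref{inequalty:X1} via $m_n > 0$, whereas the paper simply invokes $R(X) \le 1$ by definition; both are valid.
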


\begin{proof}
From Proposition~\ref{GRLB_X1}, we have 
$R(X^1(n)) = \displaystyle \frac{n}{n +2 - \frac{\int_{-n}^{2} (2-t)^2 (n+t)^{n-1} (t+2n+2)^{\frac{n(n-1)}{2}} \, dt }{\int_{-n}^{2} (2-t) (n+t)^{n-1} (t+2n+2)^{\frac{n(n-1)}{2}} \, dt } }$. 
Since all functions $2-t$, $n+t$ and $t+2n+2$ in the integrands are positive for $-n < t < 2$, 
we get an inequality $R(X^1(n)) > \displaystyle \frac{n}{n+2}$; 
hence $\displaystyle \lim_{n \to \infty} R(X^1(n)) \geq \lim_{n \to \infty} \frac{n}{n+2} = 1$. 
As $R(X^1(n)) \leq 1$ by definition, we conclude that $\displaystyle \lim_{n \to \infty} R(X^1(n)) = 1$. 
\end{proof}

\subsection{Greatest Ricci lower bounds of $X^3(n, k)$} 

For $n \geq k \geq 2$, let $X^3(n, k)$ be the projective horospherical manifold of type $(C_n, \alpha_{k}, \alpha_{k-1})$ in Theorem~\ref{horosphercal}.
 
\begin{proposition} 
\label{moment polytope_3}
The moment polytope $\Delta_3 = \Delta(X^3(n,k), K_{X^3(n,k)}^{-1})$ is the line segment connecting two points $(2n-k+2) \varpi_{k-1}$ and $(2n-k+2) \varpi_k$ in $\mathfrak X(T) \otimes \mathbb R$, that is, 
\[
\Delta(X^3(n,k), K_{X^3(n,k)}^{-1}) = \{ (k \varpi_{k-1} + (2n-2k+2) \varpi_k) + t(\varpi_{k-1} - \varpi_k) : -k \leq t \leq 2n-2k+2 \}.
\] 
\end{proposition}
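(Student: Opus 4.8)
The plan is to apply Lemma~\ref{moment polytope} directly with $(G, \alpha_i, \alpha_j) = (C_n, \alpha_k, \alpha_{k-1})$, so that $i = k$ and $j = k-1$. Everything then reduces to computing the weight $2\rho_P$ attached to the submaximal parabolic $P = P^{\alpha_k} \cap P^{\alpha_{k-1}}$, together with the two coroot pairings $\langle \alpha_k^{\vee}, 2\rho_P\rangle$ and $\langle \alpha_{k-1}^{\vee}, 2\rho_P\rangle$ that pin down the endpoints of the segment. This is exactly the type-$C_n$ analogue of the computation carried out for $X^1(n)$ in Proposition~\ref{moment polytope_1}.

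First I would fix the standard realization of $C_n$ in an orthonormal basis $L_1, \dots, L_n$, with short simple roots $\alpha_m = L_m - L_{m+1}$ for $1 \leq m \leq n-1$, the long simple root $\alpha_n = 2L_n$, positive roots $\{ L_i \pm L_j : i < j \} \cup \{ 2L_i \}$, and fundamental weights $\varpi_m = L_1 + \cdots + L_m$, for which $\langle \alpha_m^{\vee}, \varpi_\ell\rangle = \delta_{m\ell}$. A direct count of the positive roots gives $2\rho_{C_n} = \sum_{m=1}^{n} 2(n-m+1) L_m$.

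The key step is the identification and subtraction of $2\rho_L$. Removing the nodes $\alpha_{k-1}$ and $\alpha_k$ leaves a Levi factor of type $A_{k-2} \times C_{n-k}$, generated respectively by $\{\alpha_1, \dots, \alpha_{k-2}\}$ (acting on $L_1, \dots, L_{k-1}$) and by $\{\alpha_{k+1}, \dots, \alpha_n\}$ (acting on $L_{k+1}, \dots, L_n$, and still containing the long root $\alpha_n$). Computing the two Weyl vectors separately yields $2\rho_L = \sum_{m=1}^{k-1} (k-2m) L_m + \sum_{m=k+1}^{n} 2(n-m+1) L_m$, so that by Lemma~\ref{barycenter}
\[
2\rho_P = 2\rho_{C_n} - 2\rho_L = (2n-k+2) \sum_{m=1}^{k-1} L_m + (2n-2k+2) L_k .
\]
Rewriting via $L_m = \varpi_m - \varpi_{m-1}$ collapses this to $2\rho_P = k\,\varpi_{k-1} + (2n-2k+2)\,\varpi_k$, the claimed center of the segment. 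The two pairings are then immediate from $\langle \alpha_m^{\vee}, \varpi_\ell\rangle = \delta_{m\ell}$, giving $\langle \alpha_{k-1}^{\vee}, 2\rho_P\rangle = k$ and $\langle \alpha_k^{\vee}, 2\rho_P\rangle = 2n-2k+2$. Feeding these into Lemma~\ref{moment polytope}, and using $\mathcal{M} = \mathbb{Z}(\varpi_k - \varpi_{k-1})$ so that the statement's parametrization arises from the harmless sign change $t \mapsto -t$, produces exactly $\{ 2\rho_P + t(\varpi_{k-1} - \varpi_k) : -k \leq t \leq 2n-2k+2 \}$; evaluating at the two endpoints recovers $(2n-k+2)\varpi_{k-1}$ and $(2n-k+2)\varpi_k$.

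The only delicate point is the type-$C$ bookkeeping: one must keep the long root $\alpha_n = 2L_n$ correctly assigned to the $C_{n-k}$ factor of the Levi rather than treating the tail as type $A$, and one should check the degenerate case $k = n$, where $C_{n-k} = C_0$ is empty, the Levi is just $A_{n-2}$, and the formula specializes to $2\rho_P = (n+2)\varpi_{n-1} + 2\varpi_n$ with range $-n \leq t \leq 2$. Apart from this, the argument is a routine finite computation.
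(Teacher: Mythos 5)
Your argument is correct and is essentially the paper's own proof: the same $L$-basis computation of $2\rho_{C_n}$ and of $2\rho_L$ for the Levi of type $A_{k-2}\times C_{n-k}$ (with the long root $\alpha_n=2L_n$ correctly kept in the $C_{n-k}$ factor), the subtraction $2\rho_P=2\rho_{C_n}-2\rho_L=k\varpi_{k-1}+(2n-2k+2)\varpi_k$, and the appeal to Lemma~\ref{moment polytope}, where you helpfully make explicit the reparametrization $t\mapsto -t$ that the paper leaves implicit. The only blemish is a slip in your $k=n$ sanity check: in the fundamental-weight basis $2\rho_P=n\varpi_{n-1}+2\varpi_n$, not $(n+2)\varpi_{n-1}+2\varpi_n$ --- your coefficient $n+2$ is the one in the $L$-basis, $2\rho_P=(n+2)\sum_{m=1}^{n-1}L_m+2L_n$, equivalently the endpoint $(n+2)\varpi_{n-1}$ --- though your stated range $-n\le t\le 2$ is right, and since your general formula already covers $k=n$, this does not affect the proof.
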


\begin{proof}
Let $L_1, L_2, \cdots, L_n$ be an orthonormal basis of $\mathfrak X(T) \otimes \mathbb R$. 
Then the set of all positive roots of $C_n$ consists of the following: for $1 \leq  i<j \leq n$,
$$ L_i-L_j, \quad L_i+L_j, \quad 2L_i.$$
Hence, $2\rho_G=\sum_{i<j}(L_i-L_j)+\sum_{i<j}(L_i+L_j)+ 2\sum_{i=1}^{n}L_i = \sum_{i=1}^{n}(2n-2i+2)L_i$.
On the other hand, since the reductive part $L$ of $P=P^{\alpha_{k-1}} \cap P^{\alpha_k}$ is of type $A_{k-2} \times C_{n-k}$, the positive roots of $L$ are $L_i-L_j$ for $1 \leq i<j \leq k-1$, $L_i \pm L_j$ for $k+1 \leq i<j \leq n$, and $2L_i$ for $i= k+1, \cdots , n$.
Therefore, we have $2\rho_L=\sum_{i=1}^{k-1}(k-2i)L_i+\sum_{i=k+1}^{n}(2n-2i+2)L_i$ and 
\begin{align*}
2\rho_G-2\rho_L & = \sum_{i=1}^{k}(2n-2i+2)L_i-\sum_{i=1}^{k-1}(k-2i)L_i \\
& = (2n-2k+2)\sum_{i=1}^{k}L_i+k\sum_{i=1}^{k-1}L_i=(2n-2k+2)\varpi_{k} + k\varpi_{k-1}.
\end{align*} 
The result follows from Lemma~\ref{moment polytope}.
\end{proof}

As $\alpha_n$ is a unique long simple root of the symplectic group $C_n=\Sp(2n, \mathbb C)$ and the other simple roots are short, 
it is necessary to separate the calculations in the two cases: 
(Case I) $k = n$, (Case II) $2 \leq k \leq n-1$. 

\begin{proposition} 
\label{GRLB_X3n} 
For $n \geq 2$, the greatest Ricci lower bound of an odd Lagrangian Grassmannian $X^3(n, n)$ is 
$R(X^3(n, n)) = \displaystyle \frac{2}{(n+2) \displaystyle \int_0^1 (1-t^2)^n \, dt} = \frac{2 \times (2n+1)!}{(n+2)(2^n \times n !)^2}.$ 
\end{proposition}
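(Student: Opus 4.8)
The plan is to reuse the template of the preceding propositions: read off the segment $\Delta_3$ from Proposition~\ref{moment polytope_3}, write down the Duistermaat--Heckman polynomial along it, locate the barycenter, and feed the result into Corollary~\ref{formula for greatest Ricci lower bounds}. The only genuinely new feature is that $\alpha_n$ is the long simple root of $C_n$, which changes its Killing-form normalization, and that the resulting integrals must be coaxed into the advertised closed form. First I would specialize Proposition~\ref{moment polytope_3} to $k=n$, where $2n-2k+2=2$, so that $\Delta_3$ is parametrized by $\gamma_3(t)=(n+t)\varpi_{n-1}+(2-t)\varpi_n$ for $-n\le t\le 2$, with the point $A=2\rho_P$ sitting at $t=0$.

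The heart of the argument is the Duistermaat--Heckman polynomial $P_{DH}(\gamma_3(t))=\prod_{\alpha\in\Phi_{P^u}}\kappa(\alpha,\gamma_3(t))$, where $\Phi_{P^u}=\Phi^+_{C_n}\setminus\Phi_L$ with $L$ of type $A_{n-2}$. Using $\kappa(\alpha_{n-1},\varpi_{n-1})=1$, $\kappa(\alpha_n,\varpi_n)=2$ (the long root), and vanishing off-diagonal pairings, each factor becomes $c_{n-1}(n+t)+2c_n(2-t)$, where $c_{n-1},c_n$ are the coefficients of $\alpha_{n-1},\alpha_n$ in $\alpha$. I would then sort the positive roots $L_i-L_j$, $L_i+L_j$, $2L_i$ by the pair $(c_{n-1},c_n)$: one checks that $(1,0)$ occurs $n-1$ times (factor $n+t$), $(0,1)$ occurs once, namely $\alpha_n$ itself (factor $2(2-t)$), $(1,1)$ occurs $n-1$ times (factor $n+4-t$), and $(2,1)$ occurs $\tfrac{n(n-1)}{2}$ times (the \emph{constant} factor $2n+4$). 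Hence $P_{DH}(\gamma_3(t))$ is a positive constant times $(2-t)(n+t)^{n-1}(n+4-t)^{n-1}$. This combinatorial bookkeeping, with the long root treated separately, is the step I expect to be the main obstacle and the one most prone to miscounting.

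Having the polynomial, I would compute the barycenter $\textbf{bar}_{DH}(\Delta_3)=\gamma_3(t^\ast)$, where $2-t^\ast$ equals $\left(\int_{-n}^2(2-t)P_{DH}\,dt\right)\big/\left(\int_{-n}^2 P_{DH}\,dt\right)$ and the constant prefactor cancels. The explicit evaluation below shows $t^\ast\le 0$, so the barycenter lies between $2\rho_P$ and the vertex $(n+2)\varpi_n$; the half-line of Corollary~\ref{formula for greatest Ricci lower bounds} then exits $\Delta_3$ at $\gamma_3(2)$, giving $R(X^3(n,n))=2/(2-t^\ast)$. I would symmetrize via the substitution $s=2-t$, using $(n+t)(n+4-t)=(n+2)^2-s^2$ to recast both integrals over $[0,n+2]$, obtaining
\[
R(X^3(n,n))=\frac{2\int_0^{n+2}s\big[(n+2)^2-s^2\big]^{n-1}\,ds}{\int_0^{n+2}s^2\big[(n+2)^2-s^2\big]^{n-1}\,ds}.
\]

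Finally I would evaluate the two integrals. The numerator is elementary: setting $w=(n+2)^2-s^2$ gives $(n+2)^{2n}/(2n)$. Rescaling $s=(n+2)u$ turns the denominator into $(n+2)^{2n+1}\int_0^1 u^2(1-u^2)^{n-1}\,du$, and an integration by parts (with $-(1-u^2)^n/(2n)$ an antiderivative of $u(1-u^2)^{n-1}$, boundary terms vanishing) reduces this to $\tfrac{1}{2n}\int_0^1(1-u^2)^n\,du$. Assembling the pieces, the factors of $(n+2)^{2n}$ and $2n$ cancel and yield $R(X^3(n,n))=2\big/\big((n+2)\int_0^1(1-u^2)^n\,du\big)$, which in particular is $\le 1$, confirming $t^\ast\le 0$ a posteriori. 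The closing step is the Wallis evaluation $\int_0^1(1-u^2)^n\,du=(2^n n!)^2/(2n+1)!$ (via $u=\sin\theta$ and the double-factorial reduction), which produces the factorial form $\tfrac{2\times(2n+1)!}{(n+2)(2^n\times n!)^2}$. The integral manipulations, while requiring care, are routine once the polynomial is correctly in hand.
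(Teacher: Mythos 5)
Your proposal follows the paper's own route almost verbatim: the same parametrization $\gamma_3(t)=(n+t)\varpi_{n-1}+(2-t)\varpi_n$, the same count of roots in $\Phi_{P^u}$ by the pair $(c_{n-1},c_n)$ with multiplicities $n-1$, $1$, $n-1$, $\tfrac{n(n-1)}{2}$ (and the long-root normalization $\kappa(\alpha_n,\varpi_n)=2$ making the $(2,1)$-factor the constant $2n+4$), the same polynomial $(2-t)(n+t)^{n-1}(n+4-t)^{n-1}$ up to a constant, and the same reduction via $(n+t)(n+4-t)=(n+2)^2-(2-t)^2$ and an integration by parts to the quantity $(n+2)\int_0^1(1-u^2)^n\,du$. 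The only cosmetic differences are that you substitute $s=2-t$ at the outset where the paper works in $t$ and substitutes later, and that you evaluate the Wallis integral by the trigonometric substitution where the paper derives the factorial form by a recurrence in its Lemma~\ref{inequalty:X3n}.

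The one genuine gap is your treatment of the sign of $t^\ast$, i.e.\ of which endpoint the half-line of Corollary~\ref{formula for greatest Ricci lower bounds} exits through. The identity $R=2/(2-t^\ast)$ is valid only when $t^\ast\le 0$; if instead $t^\ast>0$, the half-line from the barycenter through $2\rho_P$ exits at $\gamma_3(-n)$ and the formula becomes $R=n/(n+t^\ast)$. Your \emph{a posteriori} confirmation is circular as written: you derive the value $2/\bigl((n+2)\int_0^1(1-u^2)^n\,du\bigr)$ under the assumption $t^\ast\le 0$ and then observe that this value is $\le 1$ --- but that observation is exactly the unproved inequality $(n+2)\int_0^1(1-u^2)^n\,du\ge 2$, which is not self-evident and is precisely the first statement of the paper's Lemma~\ref{inequalty:X3n}. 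The gap is easy to close: the quantity $2-t^\ast=(n+2)\int_0^1(1-u^2)^n\,du=:a_n$ is computed independently of any sign assumption, so it suffices to verify $a_n\ge 2$ directly; from the factorial form $a_n=\tfrac{(n+2)(2^n\, n!)^2}{(2n+1)!}$ one gets $a_{n+1}/a_n=\tfrac{(n+3)(2n+2)}{(n+2)(2n+3)}>1$ and $a_2=\tfrac{32}{15}>2$, which is in substance the paper's recurrence argument. With that short verification supplied, your proof is complete and coincides with the paper's.
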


\begin{proof}
By Proposition~\ref{moment polytope_3}, 
the moment polytope $\Delta_3$ of $X^3(n, n)$ is parametrized as 
$$\gamma_3(t) = (n+t)\varpi_{n-1} + (2-t) \varpi_n$$ 
for $-n \leq t \leq 2$. 
Let's count the number of roots in $\Phi_{P^u}$ for each value of the coefficients of $\alpha_{n-1}, \alpha_n$:  
\begin{enumerate}
\item[(i)] the cardinality of the set $\{ \sum_{i=1}^n c_i \alpha_i \in \Phi^+ : c_{n-1} = 1, c_n = 0 \}$ is $n-1$, 
\item[(ii)] the cardinality of the set $\{ \sum_{i=1}^n c_i \alpha_i \in \Phi^+ : c_{n-1} = 0, c_n = 1 \}$ is $1$, 
\item[(iii)] the cardinality of the set $\{ \sum_{i=1}^n c_i \alpha_i \in \Phi^+ : c_{n-1} = 1, c_n = 1 \}$ is $n-1$, and 
\item[(iv)] the cardinality of the set $\{ \sum_{i=1}^n c_i \alpha_i \in \Phi^+ : c_{n-1} = 2, c_n = 1 \}$ is $\frac{n(n-1)}{2}$. 
\end{enumerate}
Then, from $\kappa (\alpha_{n-1}, \varpi_{n-1}) = 1$ and $\kappa (\alpha_n, \varpi_n) = 2$ we have 
\begin{align*}
P_{DH}(\gamma_3(t)) & = \prod_{\alpha \in \Phi_{P^u}} \kappa(\alpha, (n+t) \varpi_{n-1} + (2-t) \varpi_n) \\
& = (n+t)^{n-1} \cdot 2(2-t) \{ (n+t) + 2(2-t) \}^{n-1} \{ 2(n+t) + 2(2-t) \}^{\frac{n(n-1)}{2}} \\
& = 2(2n+4)^{\frac{n(n-1)}{2}} \cdot (n+t)^{n-1} (2-t) (n+4-t)^{n-1}.
\end{align*}
Thus, we get the volume of the moment polytope $\Delta_3$ with respect to the Duistermaat--Heckman measure 
\begin{align*}
\text{Vol}_{DH}(\Delta_3) & = \int_{-n}^{2} P_{DH}(\gamma_3(t)) \, dt = \int_{-n}^{2} 2(2n+4)^{\frac{n(n-1)}{2}} \cdot (n+t)^{n-1} (2-t) (n+4-t)^{n-1} \, dt \\
& = 2(2n+4)^{\frac{n(n-1)}{2}} \int_{-n}^{2} (2-t) \{ (n+t) (n+4-t)\}^{n-1} \, dt \\
& = 2(2n+4)^{\frac{n(n-1)}{2}} \int_{-n}^{2} (2-t) \{ -(t-2)^2 + (n+2)^2 \}^{n-1} \, dt \\
& = 2(2n+4)^{\frac{n(n-1)}{2}} \Big [ \frac{1}{2n}\{ -(t-2)^2 + (n+2)^2 \}^{n} \Big ]_{-n}^2 \\
& = 2(2n+4)^{\frac{n(n-1)}{2}} \cdot  \frac{(n+2)^{2n}}{2n} 
\end{align*}
As we know that 
$\displaystyle \int (2-t) \{ -(t-2)^2 + (n+2)^2 \}^{n-1} \, dt = \frac{1}{2n} \{ -(t-2)^2 + (n+2)^2 \}^{n}  + C$
from the previous calculation result, we can use integration by parts 
\begin{align*}
\int_{-n}^{2} (2-t) \cdot \frac{P_{DH}(\gamma_3(t))}{2(2n+4)^{\frac{n(n-1)}{2}}} \, dt 
& = \int_{-n}^{2} (2-t) \cdot (2-t) \{ -(t-2)^2 + (n+2)^2 \}^{n-1} \, dt \\ 
& = \Big [ (2-t) \cdot \frac{1}{2n}\{ -(t-2)^2 + (n+2)^2 \}^{n} \Big ]_{-n}^2 - \int_{-n}^{2} (-1) \cdot \frac{1}{2n} \{ -(t-2)^2 + (n+2)^2 \}^{n} \, dt \\
& = \frac{1}{2n} \int_{-n}^{2} \{ -(t-2)^2 + (n+2)^2 \}^{n} \, dt = \frac{(n+2)^{2n}}{2n} \int_{-n}^{2} \left \{ 1 - \left (\frac{2-t}{n+2} \right )^2 \right \}^{n} \, dt \\
& = \frac{(n+2)^{2n}}{2n} \int_{1}^{0}(1-s^2)^{n} \, (-(n+2) ds) = \frac{(n+2)^{2n}}{2n} \cdot (n+2) \int_{0}^{1}(1-s^2)^{n} \, ds
\end{align*}
Here, we use the substitution $s = \frac{2-t}{n+2}$ for the last line of equations. 
Thus, we have 
$$\displaystyle \frac{1}{\text{Vol}_{DH}(\Delta_3)} \int_{-n}^{2} (2-t) \cdot P_{DH}(\gamma_3(t)) \, dt = (n+2) \int_{0}^{1}(1-t^2)^{n} \, dt, $$ 
from which we get the barycenter 
$\displaystyle \textbf{bar}_{DH}(\Delta_3) = \gamma_3 \left( 2 - (n+2) \displaystyle \int_0^1 (1-t^2)^n \, dt \right)$ 
of the moment polytope $\Delta_3$ with respect to the Duistermaat--Heckman measure. 
Since $2 - (n+2) \displaystyle \int_0^1 (1-t^2)^n \, dt < 0$ by the first statement of Lemma~\ref{inequalty:X3n}, 
the greatest Ricci lower bound of $X^3(n)$ is 
$$R(X^3(n, n)) = \displaystyle \frac{2}{2 - \{2-(n+2) \int_0^1 (1-t^2)^n \, dt \} } = \frac{2}{(n+2) \int_0^1 (1-t^2)^n \, dt} = \frac{2 \times (2n+1)!}{(n+2)(2^n \times n !)^2}$$ 
by Corollary~\ref{formula for greatest Ricci lower bounds} and the second statement of Lemma~\ref{inequalty:X3n} .
\end{proof}

\begin{lemma} 
\label{inequalty:X3n} 
For any $n \geq 2$, the inequality $(n+2) \displaystyle \int_0^1 (1-t^2)^n \, dt > 2$ holds. 
Furthermore, we have $(n+2) \displaystyle \int_0^1 (1-t^2)^n \, dt = \frac{(n+2)(2^n \times n !)^2}{(2n+1)!}$ for any $n \geq 0$.  
\end{lemma}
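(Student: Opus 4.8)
The plan is to treat the two assertions in their natural logical order: first establish the closed-form evaluation (the second statement), and then deduce the strict inequality (the first statement) from it. Write $I_n = \int_0^1 (1-t^2)^n\,dt$, so that the quantity in question is $(n+2)I_n$. The key observation is that $I_n$ is a Wallis-type integral: the substitution $t = \sin\theta$ turns it into $\int_0^{\pi/2}\cos^{2n+1}\theta\,d\theta$. Rather than invoke Wallis' formula directly, I would derive the reduction formula $I_n = \frac{2n}{2n+1}\,I_{n-1}$ by the same integration-by-parts trick already used in the proof of Proposition~\ref{GRLB_X3n}: writing $(1-t^2)^n = (1-t^2)^{n-1} - t^2(1-t^2)^{n-1}$ and integrating the last term by parts against $v = -\frac{1}{2n}(1-t^2)^n$ gives $I_n = I_{n-1} - \frac{1}{2n}I_n$. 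Starting from $I_0 = 1$ and iterating yields $I_n = \prod_{k=1}^n \frac{2k}{2k+1} = \frac{(2^n n!)^2}{(2n+1)!}$, and multiplying by $n+2$ gives the second statement.

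For the inequality $(n+2)I_n > 2$ I would use this closed form together with a monotonicity argument. Setting $a_n = (n+2)I_n$, the reduction formula gives
$$\frac{a_{n+1}}{a_n} = \frac{n+3}{n+2}\cdot\frac{2n+2}{2n+3} = \frac{2n^2 + 8n + 6}{2n^2 + 7n + 6},$$
which exceeds $1$ for every $n \geq 1$, so $(a_n)$ is strictly increasing from $n=1$ onward. Since $a_1 = 3\int_0^1(1-t^2)\,dt = 2$, it follows at once that $a_n > 2$ for all $n \geq 2$ (a direct check also gives $a_2 = 4\cdot\frac{8}{15} = \frac{32}{15}$). A self-contained alternative, bypassing the closed form, is to factor $(1-t^2)^n = (1-t)^n(1+t)^n$ and apply Bernoulli's inequality $(1+t)^n > 1 + nt$, strict for $t \in (0,1)$ and $n \geq 2$; the two resulting Beta integrals $\int_0^1(1-t)^n\,dt = \frac{1}{n+1}$ and $\int_0^1 t(1-t)^n\,dt = \frac{1}{(n+1)(n+2)}$ combine to give exactly $\frac{2}{n+2}$, with the strictness of Bernoulli promoting this to $I_n > \frac{2}{n+2}$.

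Neither step presents a genuine obstacle; the computations are elementary and parallel the integration-by-parts already carried out in the paper. The only points requiring care are bookkeeping matters: matching the double-factorial product $\frac{(2n)!!}{(2n+1)!!}$ to the stated factorial expression $\frac{(2^n n!)^2}{(2n+1)!}$, and ensuring that the inequality is \emph{strict} rather than merely non-strict. The latter is precisely why the case $n=1$ must be excluded, since there $a_1 = (n+2)I_n = 2$ holds with equality, and it is also why the strictness of Bernoulli for $n \geq 2$ is the decisive input in the alternative argument.
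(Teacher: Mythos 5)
Your proof is correct and, in its main line, follows essentially the same route as the paper's: the same integration-by-parts identity (the paper derives $\int_0^1(1-t^2)^n\,dt=(2n+3)\int_0^1 t^2(1-t^2)^n\,dt$, equivalent to your reduction $I_n=\frac{2n}{2n+1}I_{n-1}$), the same recurrence $a_{n+1}=\frac{n+3}{n+2}\cdot\frac{2n+2}{2n+3}\,a_n$ iterated to the factorial closed form, and the same monotonicity argument for the inequality --- you anchor at $a_1=2$ with strict increase for $n\geq 1$, while the paper anchors at $a_2=\frac{32}{15}$, an immaterial difference. Your alternative argument for the inequality is, however, genuinely different and worth noting: factoring $(1-t^2)^n=(1-t)^n(1+t)^n$ and applying the strict Bernoulli inequality $(1+t)^n>1+nt$ reduces everything to the two elementary Beta integrals $\int_0^1(1-t)^n\,dt=\frac{1}{n+1}$ and $\int_0^1 t(1-t)^n\,dt=\frac{1}{(n+1)(n+2)}$, whose sum with weight $n$ is exactly $\frac{2}{n+2}$; this bypasses the closed form and the recurrence entirely, giving $a_n>2$ directly for $n\geq 2$, at the cost of not producing the exact value $(n+2)\frac{(2^n n!)^2}{(2n+1)!}$, which the paper needs elsewhere (in Proposition~\ref{GRLB_X3n}).
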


\begin{proof}
Putting a sequence $a_n := (n+2) \displaystyle \int_0^1 (1-t^2)^n \, dt$ for $n \geq 0$, we will find a recurrence relation for $a_n$. 
Using integration by parts, we have
\begin{align*}
\int_0^1 (1-t^2)^{n+1} \, dt 
& = \Big [ t \cdot (1-t^2)^{n+1} \Big ]_{0}^1 - \int_0^1 t \cdot (n+1)(1-t^2)^{n}(-2t) \, dt  \\
& = 2(n+1) \int_0^1 t^2 (1-t^2)^{n} \, dt \\
& = \int_0^1 (1-t^2)^{n} \, dt - \int_0^1 t^2 (1-t^2)^{n} \, dt 
\end{align*}
from which we deduce that $\displaystyle \int_0^1 (1-t^2)^{n} \, dt = (2n+3) \int_0^1 t^2 (1-t^2)^{n} \, dt$. 
Then, 
\begin{align*}
a_{n+1} 
& = \{(n+1)+2\} \int_0^1 (1-t^2)^{n+1} \, dt = (n+3) \int_0^1 \{ (1-t^2)^{n} - t^2 (1-t^2)^{n} \} \, dt \\
& = (n+3) \left ( 1 - \frac{1}{2n+3} \right ) \int_0^1 (1-t^2)^{n} \, dt = \frac{n+3}{n+2} \cdot \frac{2n+2}{2n+3} \, a_n.
\end{align*}
From the recurrence relation, we obtain that 
\begin{align*}
a_{n+2} 
& = \frac{n+4}{n+3} \cdot \frac{2n+4}{2n+5} \, a_{n+1} = \frac{n+4}{n+2} \cdot \frac{(2n+2)(2n+4)}{(2n+3)(2n+5)} \, a_n, \\
a_{n+3} 
& = \frac{n+5}{n+4} \cdot \frac{2n+6}{2n+7} \, a_{n+2} = \frac{n+5}{n+2} \cdot \frac{(2n+2)(2n+4)(2n+6)}{(2n+3)(2n+5)(2n+7)} \, a_n.
\end{align*}
More generally, we see that $a_{n+p} = \displaystyle \frac{n+p+2}{n+2} \cdot \frac{(2n+2)(2n+4)(2n+6) \times \cdots \times (2n+2p)}{(2n+3)(2n+5)(2n+7) \times \cdots \times (2n+2p+1)} \, a_n $ for any integer $p \geq 0$.  
Substituting $n=0$, we get $a_{p} = \displaystyle \frac{p+2}{2} \cdot \frac{2 \times 4 \times 6 \times \cdots \times 2p}{3 \times 5 \times 7 \times \cdots \times (2p+1)} \, a_0 = (p+2) \cdot  \frac{2^p \times p !}{\displaystyle \frac{(2p+1)!}{2^p \times p !}}$. 
Thus, the general term of the sequence $a_n$ is obtained: $a_n = (n+2) \cdot \displaystyle \frac{(2^n \times n !)^2}{(2n+1)!}$. 

Since $a_{n+1} = \displaystyle \frac{n+3}{n+2} \cdot \frac{2n+2}{2n+3} \, a_n = \left ( 1 + \frac{n}{2n^2 + 7n + 6} \right ) \, a_n$ and 
$a_2 = 4 \displaystyle \int_0^1 (1-t^2)^2 \, dt = \frac{32}{15} > 2$, 
the inequalty $a_n >2$ holds for every natural number $n \geq 2$. 
\end{proof}

From the proof of Lemma~\ref{inequalty:X3n}, we see that the sequence $(n+2) \displaystyle \int_0^1 (1-t^2)^n \, dt$ is increasing as $n$ increases; 
hence the greatest Ricci lower bound $R(X^3(n, n))$ decreases. 
As specific examples, the approximate values of $R(X^3(n, n))$ for small $n$ are summarized as shown in the following table. 

\begin{table}[h]
\begin{center}
\begin{tabular}{c c c c c c c}
		\toprule
		$n$ & $2$   & $3$ & $4$ & $5$ & $6$ & $7$
\\
		\midrule 
	$R(X^3(n, n))$	&	$\displaystyle \frac{15}{16} = 0.9375$	&	$\displaystyle \frac{7}{8} = 0.875$  &	$\displaystyle \frac{105}{128} \approx 0.820$ 	&	$\displaystyle \frac{99}{128} \approx 0.773$ 	&	$\displaystyle \frac{3003}{4096} \approx 0.733$  &	$\displaystyle \frac{715}{1024} \approx 0.698$ 
\\
		\bottomrule
\end{tabular}
\caption{Approximate values of $R(X^3(n, n))$.}
\label{table3}
\end{center}
\end{table}

\vskip -1em 

\begin{corollary} 
\label{limit of R(X3n)}
The greatest Ricci lower bound $R(X^3(n, n))$ of $X^3(n, n)$ converges to $0$ as $n$ increases, that is, $\displaystyle \lim_{n \to \infty} R(X^3(n, n)) = 0$. 
In addition, Tian's alpha-invariant $\alpha(X^3(n, n))$ also converges to $0$ as $n$ increases.
\end{corollary}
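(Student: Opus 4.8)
The plan is to read off the growth of $R(X^3(n,n))$ directly from the closed form in Proposition~\ref{GRLB_X3n} and then to transfer the conclusion to the alpha-invariant via the inequality recalled in the introduction. Set $a_n := (n+2)\displaystyle\int_0^1 (1-t^2)^n\,dt$. By Proposition~\ref{GRLB_X3n} together with Lemma~\ref{inequalty:X3n} we have $R(X^3(n,n)) = \dfrac{2}{a_n}$, so the first assertion is equivalent to $\displaystyle\lim_{n\to\infty} a_n = \infty$.

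First I would reuse the recurrence relation established in the proof of Lemma~\ref{inequalty:X3n}, namely
\[
a_{n+1} = \frac{n+3}{n+2}\cdot\frac{2n+2}{2n+3}\,a_n = \left(1 + \frac{n}{2n^2+7n+6}\right) a_n.
\]
Writing $b_k := \dfrac{k}{2k^2+7k+6} > 0$, this yields $a_n = a_2\displaystyle\prod_{k=2}^{n-1}(1+b_k)$. Since $b_k \sim \dfrac{1}{2k}$ as $k\to\infty$, the series $\sum_k b_k$ diverges (by limit comparison with the harmonic series), and hence the infinite product $\prod_k (1+b_k)$ diverges to $+\infty$ by the standard criterion for products with nonnegative terms. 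Therefore $a_n\to\infty$ and $R(X^3(n,n)) = 2/a_n \to 0$. Equivalently, one could evaluate $\displaystyle\int_0^1(1-t^2)^n\,dt = \tfrac{\sqrt\pi}{2}\,\dfrac{n!}{\Gamma(n+3/2)}$ via the Beta function and apply Stirling to obtain $a_n\sim \tfrac{\sqrt\pi}{2}\sqrt{n}$, which makes the $\sqrt n$-rate of divergence explicit.

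For the alpha-invariant, I would invoke the inequality $R(X) \geq \alpha(X)\cdot\dfrac{\dim X + 1}{\dim X}$ from \cite[Theorem~2.1]{tian87} recalled in the introduction. Combined with the nonnegativity of Tian's alpha-invariant this gives
\[
0 \leq \alpha(X^3(n,n)) \leq \frac{\dim X^3(n,n)}{\dim X^3(n,n)+1}\,R(X^3(n,n)) \leq R(X^3(n,n)),
\]
so $\alpha(X^3(n,n))\to 0$ follows from $R(X^3(n,n))\to 0$ by the squeeze theorem. The one genuine point in the argument is the divergence $a_n\to\infty$: the monotonicity of $a_n$ proved in Lemma~\ref{inequalty:X3n} shows only that the sequence increases and does not by itself rule out convergence to a finite limit, so the crux is to extract the $\tfrac{1}{2k}$-order contribution of each factor in the recurrence and appeal to the divergence of the harmonic series (equivalently, the $\sqrt n$ growth coming from Stirling's formula).
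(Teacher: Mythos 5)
Your proof is correct, but your primary route to $\lim_n R(X^3(n,n))=0$ is genuinely different from the paper's. The paper works from the factorial closed form $R(X^3(n,n)) = \frac{2\,(2n+1)!}{(n+2)(2^n\, n!)^2}$ and applies the two-sided Stirling bounds $\sqrt{2\pi n}\,(n/e)^n \leq n! \leq e\sqrt{n}\,(n/e)^n$ to produce the explicit upper bound $\frac{2\sqrt{2n+1}}{\pi(n+2)}\bigl(1+\frac{1}{2n}\bigr)^{2n+1} \to 0$. You instead reuse the recurrence $a_{n+1} = \bigl(1+\frac{n}{2n^2+7n+6}\bigr)a_n$ already established in Lemma~\ref{inequalty:X3n} and conclude $a_n \to \infty$ from the divergence of $\sum_k \frac{1}{2k}$ via the standard criterion for infinite products with nonnegative terms; this is more elementary and entirely self-contained within the paper's own lemma, avoiding factorial estimates altogether. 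What it gives up is the quantitative rate, though your parenthetical Beta-function computation $\int_0^1(1-t^2)^n\,dt = \frac{\sqrt{\pi}}{2}\frac{n!}{\Gamma(n+3/2)}$ recovers it in sharper form than the paper's bound, yielding the exact asymptotic $a_n \sim \frac{\sqrt{\pi}}{2}\sqrt{n}$, i.e.\ $R(X^3(n,n)) \sim \frac{4}{\sqrt{\pi n}}$. Your treatment of the alpha-invariant coincides with the paper's: both invoke Tian's inequality $R(X) \geq \alpha(X)\cdot\frac{\dim X+1}{\dim X}$ (valid here since $X^3(n,n)$ admits no K\"ahler--Einstein metric, its automorphism group being non-reductive) and squeeze, the paper additionally noting $\dim X^3(n,n) = \frac{n(n+3)}{2}$. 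One point worth making explicit in your write-up, which you correctly flagged yourself: the monotonicity from Lemma~\ref{inequalty:X3n} alone proves nothing about divergence, so the harmonic-series comparison is indeed the load-bearing step of your argument, and you have supplied it correctly.
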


\begin{proof}
Since the formula $R(X^3(n, n)) = \displaystyle \frac{2 \times (2n+1)!}{(n+2)(2^n \times n !)^2}$ in Proposition \ref{GRLB_X3n} is expressed as factorials, it is useful to use an approximation for them. 
Stirling's approximation gives bounds of $n!$ valid for all positive integers $n$:
$\sqrt{2 \pi n} \left ( \displaystyle \frac{n}{e} \right )^n \leq n ! \leq e \sqrt{n} \left ( \displaystyle \frac{n}{e} \right )^n$.
Thus, we get an inequality 
\begin{align*}
R(X^3(n, n)) = \frac{2 \times (2n+1)!}{(n+2)(2^n \times n !)^2} 
& \leq \frac{2 \times e \sqrt{2n+1} \left ( \displaystyle \frac{2n+1}{e} \right )^{2n+1}}{(n+2) \left \{ 2^n \times \sqrt{2 \pi n} \left ( \displaystyle \frac{n}{e} \right )^n \right \}^2} \\
& = \frac{2 \times \sqrt{2n+1} (2n+1)^{2n+1}}{(n+2) \times 2^{2n} \times 2 \pi n \times n^{2n}} 
= \frac{2 \sqrt{2n+1}}{\pi(n+2)} \cdot \left ( 1 + \frac{1}{2n} \right )^{2n+1}. 
\end{align*}
As $\displaystyle \lim_{n \to \infty} \left ( 1 + \frac{1}{2n} \right )^{2n+1} = e$ and $\displaystyle \lim_{n \to \infty} \frac{\sqrt{2n+1}}{n+2} = 0$, 
we conclude that $\displaystyle \lim_{n \to \infty} R(X^3(n, n)) = 0$. 

On the other hand, Tian \cite{tian87} showed that a lower bound of $R(X)$ in terms of the alpha-invariant $\alpha(X)$ for $X$ which does not admit a K\"{a}hler--{E}instein metric: $R(X) \geq \alpha(X) \cdot \frac{\dim X + 1}{\dim X}$. 
Hence, $\dim X^3(n, n) = \frac{n(n+3)}{2}$ implies that $\displaystyle \lim_{n \to \infty} \alpha(X^3(n, n)) = 0$. 
\end{proof}

Now, let's move on to the next case for $2 \leq k \leq n-1$. 

\begin{proposition} 
\label{GRLB_X3} 
For $n > k \geq 2$, the greatest Ricci lower bound of $X^3(n, k)$ is 
$$R(X^3(n, k)) = \displaystyle \frac{(2n-2k+2) \displaystyle \int_{-k}^{2n-2k+2} (k+t)^{k-1} (2n-2k+2-t)^{2n-2k+1} (4n-3k+4-t)^{k-1} \, dt}{\displaystyle\int_{-k}^{2n-2k+2} (k+t)^{k-1} (2n-2k+2-t)^{2n-2k+2} (4n-3k+4-t)^{k-1} \, dt}.$$ 
\end{proposition}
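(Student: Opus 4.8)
The plan is to reuse the template of the previous subsections: parametrize the moment polytope $\Delta_3$ by the affine map $\gamma_3$ coming from Proposition~\ref{moment polytope_3}, evaluate the Duistermaat--Heckman polynomial along $\gamma_3$, locate the barycenter, and feed the result into Corollary~\ref{formula for greatest Ricci lower bounds}. By Proposition~\ref{moment polytope_3} we have $\gamma_3(t) = (k+t)\varpi_{k-1} + (2n-2k+2-t)\varpi_k$ for $-k \le t \le 2n-2k+2$, and $2\rho_P$ sits at $t=0$. Since $2 \le k \le n-1$, both $\alpha_{k-1}$ and $\alpha_k$ are short simple roots of $C_n$, so $\kappa(\alpha_{k-1},\varpi_{k-1}) = \kappa(\alpha_k,\varpi_k) = 1$ while $\kappa(\alpha_i,\varpi_{k-1}) = \kappa(\alpha_i,\varpi_k) = 0$ for the remaining indices. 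Hence, writing a positive root as $\alpha = \sum_i c_i\alpha_i$ and abbreviating $a = k+t$ and $b = 2n-2k+2-t$, one gets $\kappa(\alpha,\gamma_3(t)) = c_{k-1}\,a + c_k\,b$.

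The heart of the computation is to count, for each pair $(c_{k-1},c_k)$, the number of roots in $\Phi_{P^u} = \Phi^+_{C_n} \setminus \Phi_L$ (with $\Phi_L$ of type $A_{k-2}\times C_{n-k}$). Expanding $L_i-L_j$, $L_i+L_j$ and $2L_i$ in the simple root basis and reading off the $(k-1)$-st and $k$-th coefficients, I expect the nonzero pairs and multiplicities to be $(1,0)$ with multiplicity $k-1$, $(0,1)$ with multiplicity $2n-2k$, $(0,2)$ with multiplicity $1$, $(1,2)$ with multiplicity $k-1$, $(1,1)$ with multiplicity $2(k-1)(n-k)$, and $(2,2)$ with multiplicity $\frac{k(k-1)}{2}$, while no root realizes $(2,0)$ or $(2,1)$. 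The decisive observation is that $a+b = 2n-k+2$ is independent of $t$, so the factors contributed by the $(1,1)$ and $(2,2)$ roots (namely $a+b$ and $2(a+b)$) are constant, as is the extra $2$ from the single $(0,2)$ root. Therefore, up to a positive multiplicative constant,
$$P_{DH}(\gamma_3(t)) = a^{k-1}\, b^{2n-2k+1}\,(a+2b)^{k-1} = (k+t)^{k-1}(2n-2k+2-t)^{2n-2k+1}(4n-3k+4-t)^{k-1},$$
where the exponent $2n-2k+1 = (2n-2k)+1$ collects the $(0,1)$ and $(0,2)$ roots, and $a+2b = 4n-3k+4-t$.

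Next, set $\beta := \frac{1}{\text{Vol}_{DH}(\Delta_3)}\int_{-k}^{2n-2k+2}(2n-2k+2-t)\,P_{DH}(\gamma_3(t))\,dt$ for the averaged $\varpi_k$-coordinate. Because $a+b$ is constant, the barycenter lies on the segment at $\textbf{bar}_{DH}(\Delta_3) = \gamma_3(2n-2k+2-\beta)$, and multiplying the density by $(2n-2k+2-t)$ merely raises the exponent of that factor by one, so $\beta$ equals the quotient of the two integrals appearing in the statement. It then remains to invoke Corollary~\ref{formula for greatest Ricci lower bounds}: the point $A = 2\rho_P$ is at $t=0$, and once we know the barycenter lies at parameter $\bar t = 2n-2k+2-\beta < 0$, the half-line from $B$ through $A$ exits $\Delta_3$ at the endpoint $Q$ with $t = 2n-2k+2$, giving $R(X^3(n,k)) = \frac{|\overrightarrow{AQ}|}{|\overrightarrow{BQ}|} = \frac{2n-2k+2}{2n-2k+2-\bar t} = \frac{2n-2k+2}{\beta}$, which is exactly the claimed formula.

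The main obstacle is verifying $\bar t < 0$, i.e. $\int_{-k}^{2n-2k+2} t\,(k+t)^{k-1}(2n-2k+2-t)^{2n-2k+1}(4n-3k+4-t)^{k-1}\,dt < 0$, which I would isolate as a separate lemma in the spirit of Lemmas~\ref{inequalty:X1} and~\ref{inequalty:X3n}. Since $k+t$, $2n-2k+2-t$, $4n-3k+4-t$ are positive on the open interval and $(4n-3k+4-t)^{k-1}$ is strictly decreasing, treating $t>0$ and $t<0$ separately yields $t\,(4n-3k+4-t)^{k-1} < t\,(4n-3k+4)^{k-1}$ for all $t \neq 0$, so the integral is strictly less than $(4n-3k+4)^{k-1}\int_{-k}^{2n-2k+2} t\,(k+t)^{k-1}(2n-2k+2-t)^{2n-2k+1}\,dt$. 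After the substitution $s = k+t$ this last integral becomes $\int_0^{M}(s-k)\,s^{k-1}(M-s)^{2n-2k+1}\,ds$ with $M = 2n-k+2 = (k-1)+(2n-2k+1)+2$, and a Beta-function evaluation collapses it to a positive constant times $\big((k-1)+1-k\big) = 0$. Hence the displayed moment integral is strictly negative, confirming $\bar t < 0$. I expect this vanishing-moment identity to be the only genuinely delicate step; the root bookkeeping, though lengthy, is routine once the constancy of $a+b$ is noticed.
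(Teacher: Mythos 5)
Your proposal is correct, and the main computation coincides with the paper's: you find the same root multiplicities for each pair $(c_{k-1},c_k)$, the same Duistermaat--Heckman polynomial $P_{DH}(\gamma_3(t)) = C\,(k+t)^{k-1}(2n-2k+2-t)^{2n-2k+1}(4n-3k+4-t)^{k-1}$ up to the positive constant $C$ (the paper makes the constant explicit; your observation that $a+b=2n-k+2$ is constant is exactly why the $(1,1)$ and $(2,2)$ factors drop out), and the same application of Corollary~\ref{formula for greatest Ricci lower bounds}. Your bookkeeping via the averaged $\varpi_k$-coordinate $\beta$, giving $R=\frac{2n-2k+2}{\beta}$ directly, is a trivially equivalent reshuffling of the paper's computation of the averaged $(k+t)$-coordinate followed by algebraic simplification. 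The genuine divergence is in the sign lemma $\bar t<0$. The paper's Lemma~\ref{inequalty:X3nk} proceeds by a chain of iterated integrations by parts that reduces the moment integral to the inequality $(n+\ell+2)\int_0^1(1-s^2)^n\,ds > \frac{(2\ell+2)(2\ell)\cdots 2}{(2\ell+1)(2\ell-1)\cdots 3}$ for $\ell=n-k$, which is then established by induction on $n$; this has the side benefit of tying the $k<n$ case to the quantity $(n+2)\int_0^1(1-t^2)^n\,dt$ governing the $k=n$ case in Lemma~\ref{inequalty:X3n}. You instead transplant the paper's own technique from Lemma~\ref{inequalty:X1}: bound the monotone decreasing factor $(4n-3k+4-t)^{k-1}$ by its value at $t=0$ (with the sign of $t$ handling both half-intervals), then show the remaining first moment $\int_{-k}^{2n-2k+2} t\,(k+t)^{k-1}(2n-2k+2-t)^{2n-2k+1}\,dt$ vanishes exactly. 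Your Beta-function evaluation is correct: after $s=k+t$ the weight $s^{k-1}(M-s)^{2n-2k+1}$ on $[0,M]$ with $M=2n-k+2$ has mean $M\cdot\frac{k}{(k-1)+(2n-2k+1)+2}=k$, so the centered moment is zero, and strictness of the pointwise bound away from $t=0$ gives the strict inequality (using $k-1\geq 1$, which your hypothesis $k\geq 2$ supplies). Your route is notably shorter, avoids the induction entirely, and unifies the $X^1(n)$ and $X^3(n,k)$ sign lemmas under one mechanism; what it forgoes is the paper's explicit closed-form reduction that makes the consistency with the odd Lagrangian case $k=n$ visible.
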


\begin{proof}
By Proposition \ref{moment polytope_3}, 
the moment polytope $\Delta_3$ of $X^3(n, k)$ is parametrized as 
$$\gamma_3(t) = (k+t)\varpi_{k-1} + (2n-2k+2-t) \varpi_k$$ 
for $-k \leq t \leq 2n-2k+2$. 
Let's count the number of roots in $\Phi_{P^u}$ for each value of the coefficients of $\alpha_{k-1}, \alpha_k$:  
\begin{enumerate}
\item[(i)] the cardinality of the set $\{ \sum_{i=1}^n c_i \alpha_i \in \Phi^+ : c_{k-1} = 1, c_k = 0 \}$ is $k-1$, 
\item[(ii)] the cardinality of the set $\{ \sum_{i=1}^n c_i \alpha_i \in \Phi^+ : c_{k-1} = 0, c_k = 1 \}$ is $2(n-k)$, 
\item[(iii)] the cardinality of the set $\{ \sum_{i=1}^n c_i \alpha_i \in \Phi^+ : c_{k-1} = 1, c_k = 1 \}$ is $2(k-1)(n-k)$, 
\item[(iv)] the cardinality of the set $\{ \sum_{i=1}^n c_i \alpha_i \in \Phi^+ : c_{k-1} = 0, c_k = 2 \}$ is $1$,  
\item[(v)] the cardinality of the set $\{ \sum_{i=1}^n c_i \alpha_i \in \Phi^+ : c_{k-1} = 1, c_k = 2 \}$ is $k-1$, and 
\item[(vi)] the cardinality of the set $\{ \sum_{i=1}^n c_i \alpha_i \in \Phi^+ : c_{k-1} = 2, c_k = 2 \}$ is $\frac{k(k-1)}{2}$. 
\end{enumerate}
Then, from $\kappa (\alpha_{k-1}, \varpi_{k-1}) = 1$ and $\kappa (\alpha_k, \varpi_k) = 1$ we have 
\begin{align*}
P_{DH}(\gamma_3(t)) & = \prod_{\alpha \in \Phi_{P^u}} \kappa(\alpha, (k+t) \varpi_{k-1} + (2n-2k+2-t) \varpi_k) \\
& = (k+t)^{k-1} (2n-2k+2-t)^{2(n-k)} \{(k+t) + (2n-2k+2-t) \}^{2(k-1)(n-k)}  \\
& \times \{ 2 (2n-2k+2-t) \} \{ (k+t) + 2 (2n-2k+2-t) \}^{k-1} \{ 2 (k+t) + 2 (2n-2k+2-t) \}^{\frac{k(k-1)}{2}}\\
& = 2(2n-k+2)^{2(k-1)(n-k)} \{ 2 (2n-k+2) \}^{\frac{k(k-1)}{2}}
(k+t)^{k-1} (2n-2k+2-t)^{2n-2k+1} (4n-3k+4-t)^{k-1}.
\end{align*}
As $\displaystyle \frac{1}{\text{Vol}_{DH}(\Delta_3)} \int_{-k}^{2n-2k+2} (k+t) \cdot P_{DH}(\gamma_3(t)) \, dt 
= \frac{ \int_{-k}^{2n-2k+2} (k+t)^{k} (2n-2k+2-t)^{2n-2k+1} (4n-3k+4-t)^{k-1} \, dt}{\int_{-k}^{2n-2k+2} (k+t)^{k-1} (2n-2k+2-t)^{2n-2k+1} (4n-3k+4-t)^{k-1} \, dt}$, 
we get the barycenter 
$\displaystyle \textbf{bar}_{DH}(\Delta_3) 
= \gamma_3 \left( \frac{ \int_{-k}^{2n-2k+2} (k+t)^{k} (2n-2k+2-t)^{2n-2k+1} (4n-3k+4-t)^{k-1} \, dt}{\int_{-k}^{2n-2k+2} (k+t)^{k-1} (2n-2k+2-t)^{2n-2k+1} (4n-3k+4-t)^{k-1} \, dt} - k \right)$
of the moment polytope $\Delta_3$ with respect to the Duistermaat--Heckman measure.  

Since $\displaystyle \frac{ \int_{-k}^{2n-2k+2} (k+t)^{k} (2n-2k+2-t)^{2n-2k+1} (4n-3k+4-t)^{k-1} \, dt}{\int_{-k}^{2n-2k+2} (k+t)^{k-1} (2n-2k+2-t)^{2n-2k+1} (4n-3k+4-t)^{k-1} \, dt} - k < 0$
by Lemma \ref{inequalty:X3nk}, 
the greatest Ricci lower bound of $X^3(n, k)$ is 
\begin{align*}
R(X^3(n, k)) & = \displaystyle \frac{2n-2k+2}{(2n-2k+2) - \left (\displaystyle \frac{ \int_{-k}^{2n-2k+2} (k+t)^{k} (2n-2k+2-t)^{2n-2k+1} (4n-3k+4-t)^{k-1} \, dt}{\int_{-k}^{2n-2k+2} (k+t)^{k-1} (2n-2k+2-t)^{2n-2k+1} (4n-3k+4-t)^{k-1} \, dt} - k \right ) } \\
& = \displaystyle \frac{2n-2k+2}{(2n-k+2) - \displaystyle \frac{ \int_{-k}^{2n-2k+2} (k+t)^{k} (2n-2k+2-t)^{2n-2k+1} (4n-3k+4-t)^{k-1} \, dt}{\int_{-k}^{2n-2k+2} (k+t)^{k-1} (2n-2k+2-t)^{2n-2k+1} (4n-3k+4-t)^{k-1} \, dt} } \\
& = \displaystyle \frac{(2n-2k+2)\int_{-k}^{2n-2k+2} (k+t)^{k-1} (2n-2k+2-t)^{2n-2k+1} (4n-3k+4-t)^{k-1} \, dt}{\int_{-k}^{2n-2k+2} (k+t)^{k-1} (2n-2k+2-t)^{2n-2k+2} (4n-3k+4-t)^{k-1} \, dt} 
\end{align*}
by Corollary~\ref{formula for greatest Ricci lower bounds}. 
\end{proof}

\begin{lemma} 
\label{inequalty:X3nk} 
For any $n > k \geq 2$, the inequality 
$$\displaystyle \frac{ \displaystyle \int_{-k}^{2n-2k+2} (k+t)^{k} (2n-2k+2-t)^{2n-2k+1} (4n-3k+4-t)^{k-1} \, dt}{\displaystyle \int_{-k}^{2n-2k+2} (k+t)^{k-1} (2n-2k+2-t)^{2n-2k+1} (4n-3k+4-t)^{k-1} \, dt} < k $$ holds. 
\end{lemma}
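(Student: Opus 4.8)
The plan is to recast the inequality as a statement about a weighted barycenter, exactly as in the proof of Lemma~\ref{inequalty:X1}. Writing
$$w(t) = (k+t)^{k-1}(2n-2k+2-t)^{2n-2k+1}(4n-3k+4-t)^{k-1}$$
for the weight common to numerator and denominator, the numerator integrand is $(k+t)\,w(t)$ and the denominator integrand is $w(t)$. Since $w(t) > 0$ on the open interval, the asserted bound $\frac{\int (k+t)\,w}{\int w} < k$ is equivalent to $\int (k+t-k)\,w < 0$, that is, to the single sign condition
$$I := \int_{-k}^{2n-2k+2} t\, w(t)\, dt < 0,$$
so I would first reduce the problem to proving this one inequality.

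Next I would isolate the unique factor of $w$ that is positive and monotone throughout the interval, namely $(4n-3k+4-t)^{k-1}$, which is strictly decreasing, and bound it by its value $(4n-3k+4)^{k-1}$ at the interior point $t=0$ where the sign-changing factor $t$ vanishes. The decisive observation is that this bound points the same way on both sides of $0$: for $t>0$ the product $t(k+t)^{k-1}(2n-2k+2-t)^{2n-2k+1}$ is positive while $(4n-3k+4-t)^{k-1} < (4n-3k+4)^{k-1}$, and for $t<0$ that product is negative while $(4n-3k+4-t)^{k-1} > (4n-3k+4)^{k-1}$; in both cases the integrand is strictly dominated by $t(k+t)^{k-1}(2n-2k+2-t)^{2n-2k+1}(4n-3k+4)^{k-1}$ away from the measure-zero set where some factor vanishes. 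Integrating yields $I < (4n-3k+4)^{k-1}\,J$, where
$$J := \int_{-k}^{2n-2k+2} t\,(k+t)^{k-1}(2n-2k+2-t)^{2n-2k+1}\, dt.$$

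It then remains to show $J \le 0$; in fact I expect $J = 0$ exactly, which is precisely what makes the strict domination above decisive. To evaluate $J$ I would substitute $s = k+t$, so that the interval becomes $[0,M]$ with $M = 2n-k+2$ and the middle factor becomes $(M-s)^{2n-2k+1}$, giving $J = \int_0^M (s-k)\,s^{k-1}(M-s)^{2n-2k+1}\,ds$. Splitting this into the two Beta integrals $\int_0^M s^a(M-s)^b\,ds = M^{a+b+1}\,a!\,b!/(a+b+1)!$ and setting $p = k-1$, $q = 2n-2k+1$, the two terms combine to $M^{p+q+1}\frac{p!\,q!}{(p+q+1)!}\left(M\frac{p+1}{p+q+2} - k\right)$. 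The arithmetic identities $p+1 = k$ and $p+q+2 = 2n-k+2 = M$ force the parenthesis to vanish, so $J = 0$ and hence $I < 0$.

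The main obstacle is engineering the bound in the second step so that it tilts in the correct direction on both sides of the sign change of $t$; once that is arranged, the favorable value $J=0$ is a clean cancellation of Beta factors, but it rests entirely on the exponent matching $p+q+2 = M$ that the moment polytope supplies, and one must retain the strictness of the domination on a set of positive measure in order to conclude $I<0$ rather than merely $I \le 0$.
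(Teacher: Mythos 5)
Your proof is correct, and it takes a genuinely different route from the paper's own proof of Lemma~\ref{inequalty:X3nk}. The paper begins with the same first reduction, to the sign condition $I(n,k)=\int_{-k}^{2n-2k+2} t\,w(t)\,dt<0$, but then groups $(k+t)^{k-1}(4n-3k+4-t)^{k-1}=\bigl\{-(2n-2k+2-t)^2+(2n-k+2)^2\bigr\}^{k-1}$ and integrates by parts repeatedly, raising the power of this bracket while lowering the power of $(2n-2k+2-t)$ until the sign question becomes the inequality $(n+\ell+2)\int_0^1(1-s^2)^n\,ds>\frac{(2\ell+2)(2\ell)\cdots 4\cdot 2}{(2\ell+1)(2\ell-1)\cdots 3}$ for fixed $\ell=n-k$, which is then proved by induction on $n$ using recurrences from the proof of Lemma~\ref{inequalty:X3n}. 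You instead transplant the method the paper itself uses for the companion Lemma~\ref{inequalty:X1}: freeze the strictly monotone factor $(4n-3k+4-t)^{k-1}$ at the sign change $t=0$ of the integrand, observing that the replacement tilts the same way on both half-intervals (valid since $k-1\geq 1$ makes that factor strictly decreasing and the remaining factors are positive on the open interval), which gives the strict bound $I<(4n-3k+4)^{k-1}J$; you then show $J=0$ exactly. Your Beta-integral evaluation checks out: with $p=k-1$, $q=2n-2k+1$, $M=2n-k+2$ one has $p+1=k$ and $p+q+2=M$, so the bracket $M\frac{p+1}{p+q+2}-k$ vanishes — the paper verifies the analogous vanishing in Lemma~\ref{inequalty:X1} by direct antidifferentiation, but the content is identical. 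Your attention to strictness is also right: the pointwise domination is strict for every $t\in(-k,2n-2k+2)\setminus\{0\}$, a set of full measure, so $I<0$ rather than merely $I\leq 0$. What your route buys is brevity and uniformity: it handles $X^3(n,k)$ by the same one-step freezing argument as $X^1(n)$ and dispenses entirely with the integration-by-parts cascade and the induction, while the paper's longer computation produces no intermediate identities that are used elsewhere, so nothing is lost by your shortcut.
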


\begin{proof}
It suffices to show that 
$\displaystyle \int_{-k}^{2n-2k+2} t (k+t)^{k-1} (2n-2k+2-t)^{2n-2k+1} (4n-3k+4-t)^{k-1} \, dt < 0$ for $n > k \geq 2$. 
Putting the left side of this inequality as $I(n,k)$, we have 
\begin{align*}
I(n,k) 
& = \int_{-k}^{2n-2k+2} t (2n-2k+2-t)^{2n-2k+1} \{ (k+t)(4n-3k+4-t) \}^{k-1} \, dt  \\
& = \int_{-k}^{2n-2k+2} t (2n-2k+2-t)^{2n-2k} \cdot (2n-2k+2-t) \{ -(2n-2k+2-t)^2 + (2n-k+2)^2 \}^{k-1} \, dt.
\end{align*}
Since $\displaystyle \int (2n-2k+2-t) \{ -(2n-2k+2-t)^2 + (2n-k+2)^2 \}^{k-1} \, dt = \frac{1}{2k} \{ -(2n-2k+2-t)^2 + (2n-k+2)^2 \}^{k} + C$, 
we can use integration by parts repetitively: 
\begin{align*}
I(n,k) 
& = \Big [ t (2n-2k+2-t)^{2n-2k} \cdot \frac{1}{2k} \{ -(2n-2k+2-t)^2 + (2n-k+2)^2 \}^{k} \Big ]_{-k}^{2n-2k+2}\\
& \quad - \int_{-k}^{2n-2k+2} (2n-2k+2-t)^{2n-2k-1} \{(2n-2k+2) - (2n-2k+1) t \} \\
& \quad \times \frac{1}{2k} \{ -(2n-2k+2-t)^2 + (2n-k+2)^2 \}^{k} \, dt \\
& = \frac{1}{2k} \int_{-k}^{2n-2k+2} (2n-2k+2-t)^{2n-2k-2} \{ (2n-2k+1) t - (2n-2k+2) \} \\
& \quad \times (2n-2k+2-t) \{ -(2n-2k+2-t)^2 + (2n-k+2)^2 \}^{k} \, dt \\
& = \frac{1}{2k} \Big [\{ (2n-2k+1) t - (2n-2k+2) \}(2n-2k+2-t)^{2n-2k-2} \\
& \quad \times \frac{1}{2(k+1)} \{ -(2n-2k+2-t)^2 + (2n-k+2)^2 \}^{k+1} \Big ]_{-k}^{2n-2k+2} - \frac{1}{2k} \int_{-k}^{2n-2k+2} (2n-2k+2-t)^{2n-2k-3} \\
& \quad \times [ -(2n-2k+1)(2n-2k-1) t + (2n-2k+2)\{ 1+ (2n-2k)+(2n-2k-2) \} ] \\
& \quad \times \frac{1}{2(k+1)} \{ -(2n-2k+2-t)^2 + (2n-k+2)^2 \}^{k+1} \, dt.
\end{align*}
If $n-k = 2$, then we have 
\begin{align*}
I(n,k) 
& = \frac{1}{2k} \cdot \frac{1}{2(k+1)} 
\int_{-k}^{2n-2k+2} [ (2n-2k+1)(2n-2k-1) t - (2n-2k+2)\{ 1+ (2n-2k)+(2n-2k-2) \} ] \\
& \quad \times (2n-2k+2-t) \{ -(2n-2k+2-t)^2 + (2n-k+2)^2 \}^{k+1} \, dt  \\ 
& = \frac{1}{2k} \cdot \frac{1}{2(k+1)} \cdot \frac{1}{2(k+2)} \Big [ (2n-2k+2)(2n-2k)(2n-2k-2)(2n-k+2)^{2(k+2)} \\
& \quad -(2n-2k+1)(2n-2k-1) \int_{-k}^{2n-2k+2} \{ -(2n-2k+2-t)^2 + (2n-k+2)^2 \}^{k+2} \, dt \Big ].
\end{align*}
As a result, we obtain that 
\begin{align*}
I(n,k) 
& = \frac{1}{2k} \cdot \frac{1}{2(k+1)} \times \cdots \times \frac{1}{2n} 
\Big [ (2n-2k+2)(2n-2k)(2n-2k-2) \times \cdots \times 4 \times 2 \times (2n-k+2)^{2n} \\
& - (2n-2k+1)(2n-2k-1)(2n-2k-3) \times \cdots \times 3 \times \int_{-k}^{2n-2k+2} \{ -(2n-2k+2-t)^2 + (2n-k+2)^2 \}^{n} \, dt \Big ].
\end{align*}
For a fixed $\ell := n-k$, if we show the following inequality, then we get the conclusion we want:  
$$ (n+\ell+2) \int_{0}^{1} (1-s^2)^n \, ds > \frac{(2\ell+2)(2\ell)(2\ell-2) \times \cdots \times 4 \times 2}{(2\ell+1)(2\ell-1)(2\ell-3) \times \cdots \times 3} \qquad \text{for $n \geq \ell +2$.}$$

We give a proof by induction on $n$. \\ 
(i) First, the statement holds for $n = \ell +2$ because $(2\ell+4) \displaystyle \int_{0}^{1} (1-s^2)^{\ell + 2} \, ds > \frac{(2\ell+2)(2\ell)(2\ell-2) \times \cdots \times 4 \times 2}{(2\ell+1)(2\ell-1)(2\ell-3) \times \cdots \times 3}$ for any $\ell \geq 1$. 
Indeed, we can show that the ratio 
$f(\ell):= \displaystyle \frac{(2\ell+4) \displaystyle \int_{0}^{1} (1-s^2)^{\ell + 2} \, ds}{\displaystyle \frac{(2\ell+2)(2\ell)(2\ell-2) \times \cdots \times 4 \times 2}{(2\ell+1)(2\ell-1)(2\ell-3) \times \cdots \times 3}}$ is greater than one. 
Since $\displaystyle \int_0^1 (1-s^2)^{\ell+3} \, ds = \frac{2\ell+6}{2\ell+7} \int_0^1 (1-s^2)^{\ell+2} \, ds$ from the proof of Lemma \ref{inequalty:X3n}, 
we can express each element of the sequence $f(\ell)$ as a function of the preceding ones:
\begin{align*}
f(\ell+1) 
& = \displaystyle \frac{(2\ell+6) \displaystyle \int_{0}^{1} (1-s^2)^{\ell + 3} \, ds}{\displaystyle \frac{(2\ell+4)(2\ell+2)(2\ell) \times \cdots \times 4 \times 2}{(2\ell+3)(2\ell+1)(2\ell-1) \times \cdots \times 3}} 
= \displaystyle \frac{(2\ell+6) \cdot \displaystyle \frac{2\ell+6}{2\ell+7} \int_0^1 (1-s^2)^{\ell+2} \, ds}{\displaystyle \frac{2\ell+4}{2\ell+3} \cdot \displaystyle \frac{(2\ell+2)(2\ell) \times \cdots \times 4 \times 2}{(2\ell+1)(2\ell-1) \times \cdots \times 3}} \\
& = \displaystyle \frac{(\ell+3)^2 (2\ell+3)}{(\ell + 2)^2(2\ell+7)} f(\ell). 
\end{align*}
From the recurrence relation, we obtain that 
\begin{align*}
f(\ell+2) 
& = \displaystyle \frac{(\ell+4)^2 (2\ell+5)}{(\ell + 3)^2(2\ell+9)} f(\ell+1) 
= \frac{(\ell+4)^2 (2\ell+5)}{(\ell + 3)^2(2\ell+9)} \cdot \frac{(\ell+3)^2 (2\ell+3)}{(\ell + 2)^2(2\ell+7)} f(\ell) \\
& = \frac{(\ell+4)^2 (2\ell+3) (2\ell+5)}{(\ell + 2)^2 (2\ell+7)(2\ell+9)} f(\ell), \\
f(\ell+3) 
& = \displaystyle \frac{(\ell+5)^2 (2\ell+7)}{(\ell + 4)^2(2\ell+11)} f(\ell+2) 
= \frac{(\ell+5)^2 (2\ell+7)}{(\ell + 4)^2(2\ell+11)} \cdot \frac{(\ell+4)^2 (2\ell+3) (2\ell+5)}{(\ell + 2)^2 (2\ell+7)(2\ell+9)} f(\ell)  \\
& = \frac{(\ell+5)^2 (2\ell+3) (2\ell+5)}{(\ell + 2)^2 (2\ell+9)(2\ell+11)} f(\ell). 
\end{align*}
More generally, we see that $f(\ell+p) = \displaystyle \frac{(\ell+p+2)^2 (2\ell+3) (2\ell+5)}{(\ell + 2)^2 (2\ell+2p+3)(2\ell+2p+5)} f(\ell)$ for any natural number $p$.  
Thus, 
$f(p+1) = \displaystyle \frac{(p+3)^2 \times 5 \times 7}{3^2 (2p+5)(2p+7)} f(1) = \frac{35 (p+3)^2}{9 (2p+5)(2p+7)} \times \frac{36}{35} = \frac{4(p+3)^2}{(2p+5)(2p+7)}= 1+ \frac{1}{4 p^2 + 24p +35} >1$. 
(ii) Assume the induction hypothesis that 
$(m+\ell+2) \displaystyle \int_{0}^{1} (1-s^2)^m \, ds > \displaystyle \frac{(2\ell+2)(2\ell)(2\ell-2) \times \cdots \times 4 \times 2 }{(2\ell+1)(2\ell-1)(2\ell-3) \times \cdots \times 3}$ for a particular $m \geq \ell +2$. 
Since we know that $\displaystyle \int_0^1 (1-s^2)^{m} \, ds = (2m+3) \int_0^1 s^2 (1-s^2)^{m} \, ds$ from the proof of Lemma \ref{inequalty:X3n}, 
\begin{align*}
((m+1)+\ell+2) \displaystyle \int_{0}^{1} (1-s^2)^{m+1} \, ds
& = (m+\ell+3) \cdot \frac{2m+2}{2m+3} \displaystyle \int_{0}^{1} (1-s^2)^{m} \, ds\\
& = \frac{m+\ell+3}{m+\ell+2} \cdot \frac{2m+2}{2m+3} \cdot (m+\ell+2) \displaystyle \int_{0}^{1} (1-s^2)^{m} \, ds\\
& = \Bigg \{ 1+ \frac{m-\ell}{2m^2 + (2 \ell+7) m + 3(\ell+2)} \Bigg \} \cdot (m+\ell+2) \displaystyle \int_{0}^{1} (1-s^2)^{m} \, ds.
\end{align*}
Therefore, if $n-\ell \geq 2$ then we have  
\begin{align*} 
((m+1)+\ell+2) \displaystyle \int_{0}^{1} (1-s^2)^{m+1} \, ds 
& >(m+\ell+2) \displaystyle \int_{0}^{1} (1-s^2)^{m} \, ds \\
& > \displaystyle \frac{(2\ell+2)(2\ell)(2\ell-2) \times \cdots \times 4 \times 2 }{(2\ell+1)(2\ell-1)(2\ell-3) \times \cdots \times 3}. 
\end{align*} 

Since both the base case and the inductive step have been proved as true, by mathematical induction the inequalty holds for every natural number $n \geq \ell +2$. 
\end{proof}

\begin{remark}
The formula for the greatest Ricci lower bound of $X^3(n, n)$ in Proposition \ref{GRLB_X3n} is consistent with the formula in Proposition \ref{GRLB_X3} 
when $k = n$ is substituted in it. 
\end{remark}

Interestingly, we observe that for a fixed integer $k$ the value $R(X^3(n, k))$ always approaches to $1$ asymptotically as $n$ increases. 

\begin{corollary} 
\label{limit of R(X3nk)}
For a fixed integer $k \geq 2$, the greatest Ricci lower bound $R(X^3(n, k))$ of $X^3(n, k)$ converges to $1$ as $n$ increases, that is, $\displaystyle \lim_{n \to \infty} R(X^3(n, k)) = 1$. 
\end{corollary}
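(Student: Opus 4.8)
The plan is to mirror the argument of Corollary~\ref{limit of R(X1)}: since $R(X^3(n,k)) \le 1$ by definition, it suffices to exhibit an elementary lower bound that already tends to $1$ as $n \to \infty$ for fixed $k$, and then to conclude by the squeeze theorem. Accordingly, I would start from the closed formula in Proposition~\ref{GRLB_X3} and deliberately avoid any delicate asymptotic analysis of the two integrals or of the barycenter itself.

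First I would abbreviate $m := 2n-2k+2$ and introduce the common integrand
$$w(t) := (k+t)^{k-1}(2n-2k+2-t)^{2n-2k+1}(4n-3k+4-t)^{k-1},$$
so that the formula of Proposition~\ref{GRLB_X3} reads
$$R(X^3(n,k)) = \frac{m\int_{-k}^{m} w(t)\,dt}{\int_{-k}^{m}(m-t)\,w(t)\,dt},$$
the numerator and denominator differing only by the extra factor $m-t$ in the denominator. On the interval $[-k,m]$ every factor of $w$ is nonnegative --- indeed $4n-3k+4-t \ge 2n-k+2 > 0$ there --- so $w \ge 0$, and since $t \ge -k$ we have the pointwise bound $m-t \le m+k = 2n-k+2$. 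Integrating this inequality against the nonnegative weight $w$ gives
$$\int_{-k}^{m}(m-t)\,w(t)\,dt \le (2n-k+2)\int_{-k}^{m} w(t)\,dt.$$

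Dividing the numerator $m\int_{-k}^{m} w\,dt$ by this upper bound for the denominator then yields
$$R(X^3(n,k)) \ge \frac{(2n-2k+2)\int_{-k}^{m} w\,dt}{(2n-k+2)\int_{-k}^{m} w\,dt} = \frac{2n-2k+2}{2n-k+2},$$
and for fixed $k$ the right-hand side converges to $1$ as $n \to \infty$; combined with $R(X^3(n,k)) \le 1$ this forces $\lim_{n\to\infty} R(X^3(n,k)) = 1$. The only point requiring attention is that the crude estimate $m-t \le m+k$ must still leave a lower bound tending to $1$, and this works precisely because the slack $k$ is held fixed while $m \to \infty$, so that $\frac{m}{m+k} \to 1$. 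I therefore expect no genuine obstacle here: in contrast with the $X^3(n,n)$ regime of Corollary~\ref{limit of R(X3n)}, where the number of factors grows with $n$ and such an estimate would degrade, the fixed-$k$ case reduces to exactly the one-line mechanism already exploited for $X^1(n)$, and the remaining work is purely bookkeeping of signs and of the integration range.
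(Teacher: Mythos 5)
Your proposal is correct and takes essentially the same route as the paper: the paper also deduces $R(X^3(n,k)) > \frac{2n-2k+2}{2n-k+2}$ from the positivity of the factors $k+t$, $2n-2k+2-t$, $4n-3k+4-t$ on the integration range, and concludes by letting $n \to \infty$ with $k$ fixed while using $R \le 1$. Your pointwise estimate $m-t \le m+k$ integrated against the nonnegative weight $w$ is algebraically the same step as the paper's observation that the ratio $\int(k+t)w\,dt \big/ \int w\,dt$ subtracted from $2n-k+2$ in its form of the denominator is positive, so there is nothing to correct.
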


\begin{proof}
Recall that $R(X^3(n, k)) = \displaystyle \frac{2n-2k+2}{2n-k+2 - \displaystyle \frac{ \int_{-k}^{2n-2k+2} (k+t)^{k} (2n-2k+2-t)^{2n-2k+1} (4n-3k+4-t)^{k-1} \, dt}{\int_{-k}^{2n-2k+2} (k+t)^{k-1} (2n-2k+2-t)^{2n-2k+1} (4n-3k+4-t)^{k-1} \, dt} }$  
from Proposition \ref{GRLB_X3}.
Since all functions $k+t$, $2n-2k+2-t$ and $4n-3k+4-t$ in the integrands 
are positive for $-k < t < 2n-2k+2$, 
we get an inequality $R(X^3(n, k)) > \displaystyle \frac{2n-2k+2}{2n-k+2}$; 
hence for a fixed $k$ we have $\displaystyle \lim_{n \to \infty} R(X^3(n, k)) \geq \lim_{n \to \infty} \frac{2n-2k+2}{2n-k+2} = 1$. 
Therefore, we conclude that $\displaystyle \lim_{n \to \infty} R(X^3(n, k)) = 1$. 
\end{proof}

We conclude this paper with a remark on Question~\ref{realization}. 

\begin{corollary}\label{lowerbound}
For a given real number $t$ with $0 < t < 1$, we can find a sequence $k_n$ such that 
$$ \displaystyle \lim_{n \to \infty} R(X^3(n, k_n)) \geq t.$$
\end{corollary}

\begin{proof}
Let $r = \frac{2(t-1)}{t-2}$ and $k_n = [nr]$. 
Note that it suffices to consider only large enough natural numbers $n$.  
For $n \geq \frac{2}{r}$, we have $k_n \geq 2$, so $X^3(n,k_n)$ is well defined. 
Since $k_n \leq nr < k_n+1$, we have $\displaystyle \lim_{n \to \infty} \frac{k_n}{n} = r.$ 
By the proof of Corollary~\ref{limit of R(X3nk)}, we have 
$$\displaystyle \lim_{n \to \infty} R(X^3(n, k_n)) \geq \lim_{n \to \infty} \frac{2n-2k_n+2}{2n-k_n+2} = \frac{2-2r}{2-r} = t.$$
\end{proof}

\vskip 1em

\noindent
\textbf{Acknowledgements}. 
K.-D. Park would like to express his gratitude to Jihun Park for helpful discussions on K-stability of Fano manifolds. The authors would like to thank Yanir Rubinstein for valuable comments, and anonymous referee for useful comments and suggestions which lead to Question~\ref{realization} and Corollary~\ref{lowerbound}. 

\vskip 1em

\noindent
\textbf{Funding}. 
D. Hwang was supported by the Samsung Science and Technology Foundation under Project SSTF-BA1602-03, the National Research Foundation of Korea(NRF) grant funded by the Korea government(MSIT) (2021R1A2C1093787), and the Institute for Basic Science (IBS-R032-D1).  S.-Y. Kim was supported by the Institute for Basic Science (IBS-R003-D1).  
K.-D. Park was supported by the National Research Foundation of Korea(NRF) grant funded by the Korea government(MSIT) (No. 2019R1A2C3010487, 2021R1C1C2092610). 

\vskip 1em

\noindent
\textbf{Data availability}. 
Data sharing not applicable to this article as no datasets were generated or analysed during the current study.

\vskip 2em

\noindent
\textbf{\Large Declarations}

\vskip 1em

\noindent
\textbf{Competing interests}. 
The authors have no competing interests to declare that are relevant to the content of this article.

\vskip 3em


\providecommand{\bysame}{\leavevmode\hbox to3em{\hrulefill}\thinspace}
\providecommand{\MR}{\relax\ifhmode\unskip\space\fi MR }
\providecommand{\MRhref}[2]{%
  \href{http://www.ams.org/mathscinet-getitem?mr=#1}{#2}
}
\providecommand{\href}[2]{#2}


\begin{thebibliography}{CDS15b}

\bibitem[Aub78]{aubin78}
Thierry Aubin, \emph{{\'E}quations du type Monge--Amp{\`e}re sur les vari{\'e}t{\'e}s k{\"a}hl{\'e}riennes compactes}, Bull. Sci. Math.(2) \textbf{102} (1978), 63--95.

\bibitem[BBJ21]{BBJ21}
Robert Berman, S\'{e}bastien Boucksom, and Mattias Jonsson, \emph{A variational approach to the Yau--Tian--Donaldson conjecture}, J. Amer. Math. Soc. \textbf{34} (2021), no.~3, 605--652. 

\bibitem[Bri87]{Brion87}
Michel Brion, \emph{Sur l'image de l'application moment}, S\'{e}minaire d'alg\`ebre {P}aul {D}ubreil et {M}arie-{P}aule {M}alliavin ({P}aris, 1986), Lecture Notes in Math., vol. 1296, Springer, Berlin, 1987, pp.~177--192.

\bibitem[Bri89]{Brion89}
\bysame, \emph{Groupe de {P}icard et nombres caract\'{e}ristiques des vari\'{e}t\'{e}s sph\'{e}riques}, Duke Math. J. \textbf{58} (1989), no.~2, 397--424.

\bibitem[Bri97]{Brion97}
\bysame, \emph{Curves and divisors in spherical varieties}, Algebraic groups and {L}ie groups, Austral. Math. Soc. Lect. Ser., vol.~9, Cambridge Univ. Press, Cambridge, 1997, pp.~21--34.

\bibitem[Cab19]{Cable19}
Jacob Cable, \emph{Greatest lower bounds on Ricci curvature for Fano $T$-manifolds of complexity one}, Bull. Lond. Math. Soc. \textbf{51} (2019), no.~1, 34--42.

\bibitem[CDS15a]{cds1}
Xiuxiong Chen, Simon Donaldson, and Song Sun, \emph{K{\"a}hler--Einstein metrics on Fano manifolds. I: Approximation of metrics with cone singularities}, J. Amer. Math. Soc. \textbf{28} (2015), no.~1, 183--197.

\bibitem[CDS15b]{cds2}
\bysame, \emph{K{\"a}hler--Einstein metrics on Fano manifolds. II: Limits with cone angle less than $2\pi$}, J. Amer. Math. Soc. \textbf{28} (2015), no.~1, 199--234.

\bibitem[CDS15c]{cds3} 
\bysame, \emph{K{\"a}hler--Einstein metrics on Fano manifolds. III: Limits as cone angle approaches $2\pi$ and completion of the main proof}, J. Amer. Math. Soc. \textbf{28} (2015), no.~1, 235--278.

\bibitem[CRZ19]{CRZ19}
Ivan Cheltsov, Yanir Rubinstein, and Kewei Zhang, \emph{Basis log canonical thresholds, local intersection estimates, and asymptotically log del Pezzo surfaces}, Selecta Math. \textbf{25} (2019), no.~2, Article number 34. 

\bibitem[Del17]{Del17}
Thibaut Delcroix, \emph{K\"{a}hler--{E}instein metrics on group compactifications}, Geom. Funct. Anal. \textbf{27} (2017), no.~1, 78--129. 

\bibitem[Del20a]{Del20}
\bysame, \emph{K-stability of {F}ano spherical varieties}, Ann. Sci. \'{E}c. Norm. Sup\'{e}r. (4) \textbf{53} (2020), no.~3, 615--662.

\bibitem[Del20b]{Del20horosymmetric}
\bysame, \emph{K\"{a}hler geometry of horosymmetric varieties, and application to Mabuchi's K-energy functional}, J. Reine Angew. Math. \textbf{763} (2020), 129--199.

\bibitem[Del23]{Del20YTD}
\bysame, \emph{The Yau--Tian--Donaldson conjecture for cohomogeneity one manifolds}, Birational Geometry, K{\"a}hler--Einstein Metrics and Degenerations: Moscow, Shanghai and Pohang, April--November 2019, Springer Proceedings in Mathematics \& Statistics, vol. 409, 2023, pp. 205--223. 

\bibitem[DH21]{DH21}
Thibaut Delcroix and Jakob Hultgren, \emph{Coupled complex Monge--Amp{\`e}re equations on Fano horosymmetric manifolds}, J. Math. Pures Appl. \textbf{153} (2021), 281--315.

\bibitem[FO18]{FO18}
Kento Fujita and Yuji Odaka, \emph{On the K-stability of Fano varieties and anticanonical divisors}, Tohoku Math. J. \textbf{70} (2018), no.~4, 511--521. 

\bibitem[Gan18]{Gandini18}
Jacopo Gandini, \emph{Embeddings of spherical homogeneous spaces}, Acta Math. Sin. (Engl. Ser.) \textbf{34} (2018), no.~3, 299--340.

\bibitem[Gol20]{Golota20}
Aleksei Golota, \emph{Delta-invariants for Fano varieties with large automorphism groups}, Internat. J. Math. \textbf{31} (2020), no.~10, 2050077. 

\bibitem[GPPS22]{GPPS21}
Richard Gonzales, Cl{\'e}lia Pech, Nicolas Perrin, and Alexander Samokhin, \emph{Geometry of horospherical varieties of Picard rank one}, Int. Math. Res. Not. \textbf{2022}, no.~12, 8916--9012. 

\bibitem[Hon15]{Ho15} 
Jaehyun Hong, \emph{Classification of smooth Schubert varieties in the symplectic Grassmannians}, J. Korean Math. Soc. \textbf{52} (2015), no.~5, 1109--1122.

\bibitem[Hon16]{Ho16} 
\bysame, \emph{Smooth horospherical varieties of Picard number one as linear sections of rational homogeneous varieties}, J. Korean Math. Soc. \textbf{53} (2016), no. 5, 433--446.

\bibitem[Hum72]{Humphreys} 
James E. Humphreys, \emph{Introduction to Lie algebras and representation theory}, Graduate Texts in Mathematics 9, Springer-Verlag, New York, 1972.

\bibitem[HL21]{HL19}
Jun-Muk Hwang and Qifeng Li, \emph{Characterizing symplectic Grassmannians by varieties of minimal rational tangents}, J. Differential Geom. \textbf{119} (2021), no.~2, 309--381. 

\bibitem[HM05]{HM05}
Jun-Muk Hwang and Ngaiming Mok, 
\emph{Prolongations of infinitesimal linear automorphisms of projective varieties and rigidity of rational homogeneous spaces of Picard number 1 under K\"{a}hler deformation}, Invent. Math. \textbf{160} (2005), no.~3, 591--645. 

\bibitem[Kan21]{Kanemitsu21}
Akihiro Kanemitsu, \emph{Fano manifolds and stability of tangent bundles}, J. reine angew. Math. \textbf{774} (2021), 163--183.

\bibitem[Kan22]{Kanemitsu_manuscripta}
\bysame, \emph{K\"{a}hler--{E}instein metrics on Pasquier’s two-orbits varieties}, Manuscripta Math. \textbf{169} (2022), no.~1-2, 297--311.

\bibitem[Kim17]{Kim17}
Shin-Young Kim, \emph{Geometric structures modeled on smooth projective horospherical varieties of Picard number one}, Transform. Groups \textbf{22} (2017), no.~2, 361--386.

\bibitem[Kno91]{Knop91}
Friedrich Knop, \emph{The {L}una--{V}ust theory of spherical embeddings}, Proceedings of the {H}yderabad {C}onference on {A}lgebraic {G}roups ({H}yderabad, 1989), Manoj Prakashan, Madras, 1991, pp.~225--249.

\bibitem[KP18]{KP18}
Shin-Young Kim and Kyeong-Dong Park, \emph{Standard embeddings of smooth Schubert varieties in rational homogeneous manifolds of Picard number 1}, Acta. Math. Sin. (Engl. Ser.) \textbf{34} (2018), no.~3, 466--487 .

\bibitem[LPY21]{LPY21}
Jae-Hyouk Lee, Kyeong-Dong Park, and Sungmin Yoo, \emph{K\"{a}hler--{E}instein metrics on smooth Fano symmetric varieties with Picard number one}, Mathematics \textbf{9} (2021), no. 1, 102. 

\bibitem[Li11]{Li11}
Chi Li, \emph{Greatest lower bounds on {R}icci curvature for toric {F}ano manifolds}, Adv. Math. \textbf{226} (2011), no.~6, 4921--4932.

\bibitem[Lun97]{Luna97}
Domingo Luna, \emph{Grosses cellules pour les vari\'{e}t\'{e}s sph\'{e}riques}, Algebraic groups and {L}ie groups, Austral. Math. Soc. Lect. Ser., vol.~9, Cambridge Univ. Press, Cambridge, 1997, pp.~267--280.

\bibitem[LV83]{LV83}
Domingo Luna and Thierry Vust, \emph{Plongements d'espaces homog\`enes}, Comment. Math. Helv. \textbf{58} (1983), no.~2, 186--245.

\bibitem[LXZ22]{LXZ}  
Yuchen Liu, Chenyang Xu, and Ziquan Zhuang, 
\emph{Finite generation for valuations computing stability thresholds and applications to K-stability}, Ann. of Math. \textbf{196} (2022), no.~2, 507--566. 

\bibitem[Mat57]{matsushima} 
Yozo Matsushima, \emph{Sur la structure du groupe d'hom\'{e}omorphismes analytiques d'une certaine vari\'{e}t\'{e} k\"{a}hl\'{e}rienne}, Nagoya Math. J. \textbf{11} (1957), 145--150.

\bibitem[Mat72]{mat72}
\bysame, \emph{Remarks on {K}\"{a}hler--{E}instein manifolds}, Nagoya Math. J. \textbf{46} (1972), 161--173.

\bibitem[Mih07]{Mihai07}
Ion~Alexandru Mihai, \emph{Odd symplectic flag manifolds}, Transform. Groups \textbf{12} (2007), no.~3, 573--599.

\bibitem[Muk89]{Mukai89}
Shigeru Mukai, \emph{Biregular classification of {F}ano {$3$}-folds and {F}ano manifolds of coindex {$3$}}, Proc. Nat. Acad. Sci. U.S.A. \textbf{86} (1989), no.~9, 3000--3002. 

\bibitem[Par16]{Park16} 
Kyeong-Dong Park, \emph{Deformation rigidity of odd Lagrangian Grassmannians}, J. Korean Math. Soc. {\bf 53} (2016), no.~3, 489--501. 

\bibitem[Pas08]{Pasquier08} 
Boris Pasquier, \emph{Vari\'{e}t\'{e}s horosph\'{e}riques de Fano},  Bull. Soc. Math. France {\bf 136} (2008), no.~2, 195--225.

\bibitem[Pas09]{Pasquier} 
\bysame, \emph{On some smooth projective two-orbit varieties of Picard number 1}, Math. Ann. \textbf{344} (2009), no.~4, 963--987.

\bibitem[Per14]{Perrin} 
Nicolas Perrin, \emph{On the geometry of spherical varieties}, Transform. Groups {\bf 19} (2014), no.~1, 171--223. 

\bibitem[PM23]{MP20}
Boris Pasquier and Laurent Manivel, \emph{Horospherical two-orbit varieties as zero loci}, Proc. Amer. Math. Soc. \textbf{151} (2023), no.~8, 3347--3363. 

\bibitem[PP10]{PasquierPerrin} 
Boris Pasquier and Nicolas Perrin, \emph{Local rigidity of quasi-regular varieties}, Math. Z.  {\bf 265} (2010), no.~3, 589--600. 

\bibitem[Rub08]{Rubinstein08}
Yanir A. Rubinstein, \emph{Some discretizations of geometric evolution equations and the Ricci iteration on the space of K\"{a}hler metrics}, Adv. Math. {\bf 218} (2008), no.~5, 1526--1565.

\bibitem[Rub09]{Rubinstein09}
\bysame, \emph{On the construction of Nadel multiplier ideal sheaves and the limiting behavior of the Ricci flow}, Trans. Amer. Math. Soc. {\bf 361} (2009), no.~11, 5839--5850.

\bibitem[SW16]{SW16}
Jian Song and Xiaowei Wang, \emph{The greatest Ricci lower bound, conical Einstein metrics and Chern number inequality}, Geom. Topol. {\bf 20} (2016), no.~1, 49--102.

\bibitem[Sz\'{e}11]{Sze11}
G\'{a}bor Sz\'{e}kelyhidi, \emph{Greatest lower bounds on the {R}icci curvature of {F}ano manifolds}, Compos. Math. \textbf{147}, no.~1, 319--331.

\bibitem[Tia87]{tian87}
Gang Tian, \emph{On K\"{a}hler--{E}instein metrics on certain K\"{a}hler manifolds with $c_1(M) > 0$}, Invent. Math. \textbf{89} (1987), no.~2, 225--246.

\bibitem[Tia92]{tian92} 
\bysame, \emph{On stability of the tangent bundles of Fano varieties}, Internat. J. Math. {\bf 3} (1992), no.~3, 401--413. 

\bibitem[Tia15]{tian15} 
\bysame, \emph{K-stability and K\"{a}hler--{E}instein metrics}, Commun. Pure Appl. Math. \textbf{68} (2015), no.~7, 1085--1156. 
[Corrigendum: \emph{K-stability and K\"{a}hler--{E}instein metrics}, \textbf{68} (2015), no.~11, 2082--2083.]

\bibitem[Tim11]{Timashev11}
Dmitry~A. Timashev, \emph{Homogeneous spaces and equivariant embeddings}, Encyclopaedia of Mathematical Sciences, Vol. 138, Subseries \emph{Invariant Theory and Algebraic Transformation Groups}, Vol. 8, Springer, Heidelberg, 2011.

\bibitem[Yao17]{yao17}  
Yi Yao, \emph{Greatest lower bounds on Ricci curvature of homogeneous toric bundles}, Internat. J. Math. {\bf 28} (2017), no.~4, 1750024.

\bibitem[Yau78]{yau78}
Shing-Tung Yau, \emph{On the Ricci curvature of a compact K{\"a}hler manifold and the complex Monge--Amp{\`e}re equation, I}, Comm. Pure Appl. Math. \textbf{31} (1978), no.~3, 339--411.

\end{thebibliography}
\end{document}